\tikzset{middlearrow/.style={
        decoration={markings,
            mark= at position 0.5 with {\arrow{#1}} ,
        },
        postaction={decorate}
    }
}
\providecommand{\keywords}[1]
{
  \small	
  \textbf{\textit{Keywords:}} #1
}
\theoremstyle{plain}
\newtheorem{theorem}{Theorem}
\newtheorem{proposition}{Proposition}
\newtheorem{assumption}{Assumption}
\newtheorem{lemma}{Lemma}
\theoremstyle{definition}
\newtheorem{definition}{Definition}
\newtheorem{remark}{Remark}
\newtheorem{example}{Example}
\theoremstyle{remark}
\newtheorem{case}{Case}
\renewcommand{\dim}{{\operatorname{dim}}}
\renewcommand{\deg}{{\operatorname{deg}}}
\renewcommand\path{\mathfrak{P}}
\def\Cf1{{\mathfrak C^1}}
\def\Cf{{\mathfrak C}}
\renewcommand\vec{\mbox{vec}}
\newcommand{\indep}{\perp \!\!\! \perp}
\newcommand{\dep}{\not\!\perp\!\!\!\perp}
\newcommand{\adj}{\mbox{adj}}
\title{Identifiability of VAR(1) model in a stationary setting}
\author{Bixuan Liu\footnote{Corresponding author. Email: \texttt{bixuan.liu@sorbonne-universite.fr}}}
\affil{Sorbonne Université and Université Paris Cité, CNRS, Laboratoire de Probabilités, Statistique et Modélisation, F-75005 Paris, France}
\date{\today}
\begin{document}

\maketitle

\begin{abstract}
We consider a classical First-order Vector AutoRegressive (VAR(1)) model, where we interpret the autoregressive interaction matrix as influence relationships among the components of the VAR(1) process that can be encoded by a weighted directed graph. A majority of previous work studies the structural identifiability of the graph is based on time series observations and therefore relies on dynamical information. By constract, this work aims to incorporate the sampling insufficiency problem in ecological research, that is, due to limited resources, time-series observations are often unavailable and what we have in practice is independently, identically distributed data sampled from the stationary distribution, making classical time-series methods inapplicable. Within this framework, we assume that an equilibrium of VAR(1) exists, and study instead the identifiability of the graph from the stationary distribution, meaning that we seek a way to reconstruct the influence graph underlying the dynamic network using only static information. Importantly, the graph associated to the support of the autoregressive interaction matrix of the VAR(1) model does not coincide with the graphical model of the stationary distribution, making standard tools developed for Gaussian Graphical Models (GGMs) irrelevent. Therefore, this paper applies an approach from algebraic statistics that characterizes models using the Jacobian matroids associated with the parametrization of the models. We successfully derive easily applicable sufficient graphical conditions under which different graphs yield distinct steady-state distributions through the lens of maximal class, a concept that is initially introduced by this paper. Additionally, we illustrate how our results could be applied to characterize networks inspired by ecological research.
\end{abstract}
\keywords{model identifiability, vector autoregressive models (VAR), algebraic matroids}\\
\noindent \textbf{MSC 2020:} 62R01, 62H22, 62A09

\vspace{1cm} 

\section{Introduction}

\paragraph{VAR(1) model.}This paper considers a classical First-order Vector AutoRegressive (VAR(1)) model:
\begin{equation}\label{eq:VAR}
	\left\{\begin{array}{l}
		X_0 = \epsilon_0, \\
		X_t = \Lambda^TX_{t-1} + \epsilon_t\quad \mbox{for }t\in\mathbb{N}, \\
	\end{array}\right.
\end{equation}
where $X_t$ and $\epsilon_t$ are random vectors in $\mathbb{R}^n$ for some $n\in\mathbb{N}$. 
$\epsilon_t$ is independent of $X_{t-1}$, and $(\epsilon_t)_{t \in \mathbb{N}}$ are i.i.d..
$\Lambda = \left(\lambda_{ij}\right) \in M_n\left(\mathbb{R}\right)$, i.e., it is a deterministic $n\times n$ matrix with values in $\mathbb{R}$, which we call the interaction matrix. Each element $\lambda_{ij}$ represents the direct influence of the $i^{\mbox{th}}$ coordinate of $X_{t-1}$ on the $j^{\mbox{th}}$ coordinate of $X_t$. In this paper, the error term $\epsilon_t$ is centered with covariance matrix $\omega I_n$, where $I_n\in M_n(\mathbb{R})$ is the identity matrix, and $\omega\in\mathbb{R}^+$ is a positive constant.

If the eigenvalues of $\Lambda$ are all smaller than $1$ in absolute value, then as $t$ goes to infinity, $x_t$ converges in distribution to a centered distribution with covariance matrix $\Sigma = \left(\sigma_{ij}\right)\in M_n(\mathbb{R})$ (\cite{anderson2000note}). $\Sigma$ satisfies (\cite{young2019identifying}):
\begin{equation}\label{eq:var1_sig_lam}
    \Sigma = \Lambda^T\Sigma\Lambda + \omega I_n.
\end{equation}
Apply the vec operator (\cite[Section~10.2]{petersen2008matrix}) on both sides, (\ref{eq:var1_sig_lam}) is equivalent to:
\begin{equation}\label{eq:vec_Sigma}
    \vec\left(\Sigma\right) = \left(I_n-\Lambda^T\otimes\Lambda^T\right)^{-1}\vec\left(\omega I_n\right),
\end{equation}
where $\otimes$ is the matrix tensor product (\cite{young2019identifying}). (\ref{eq:vec_Sigma}) indicates that $\Sigma$ is unique given a unique set of parameters $\Lambda$ and $\omega$. 

\paragraph{Problem statement.}This paper studies the identifiability of the VAR(1) model in a stationary setting, that is, whether different parameters ($\Lambda$ and $\omega$) produce the same $\Sigma$, using (\ref{eq:var1_sig_lam}). We restrict the problem to the identifiability of the \textit{support} of $\Lambda$, i.e., the location of its non-zero elements, which corresponds to the reconstruction of the underlying graph structure. Importantly, the properties of Gaussian Graphical Models (GGMs) (see \cite{lauritzen1996graphical} for an introduction) are inapplicable to our setting because the graphical model of the stationary distribution is the graph corresponding to the support of the inverse of the covariance matrix $\Sigma^{-1}$, and it is not the one induced by the support of $\Lambda$. 
For instance, in Section \ref{sec:gene_iden}, Example \ref{eg:glb_iden} shows that $\Sigma^{-1}$, which is computable from $\Sigma$, can have full support (i.e., all elements are non-zero) even when $\Lambda$ does not. For this reason, our work does not study conditional independence relations as in GGMs. Instead, different methods are needed to treat our problem.

To be more precise about the graphical model, we consider in this paper, the VAR(1) model (\ref{eq:VAR}) is associated with a directed graph $G = \left(V,\mathfrak{E}_G\right)$ with node set $V=\left\{1,2,\cdots,n\right\}$, each corresponding to one of the components of $X_t$, and edge set $\mathfrak{E}_G$ consists of ordered pairs $\left(i,j\right)$ or $i\rightarrow j$, which represents the direct influence among components of $X_t$. For any $i,j\in\left[n\right]$, an edge $\left(i,j\right)$ in $\mathfrak{E}_G$ indicates $\lambda_{ij}\neq 0$, meaning that there is a direct effect of the $i^{\mbox{th}}$ coordinate of $X_{t-1}$ on the $j^{\mbox{th}}$ coordinate of $X_t$. Figure \ref{fig:grap_mod} shows an example of a graphical VAR(1) model with $4$ nodes, and the corresponding graph $G$. Because $\Lambda$ is deterministic, i.e., it does not change over time, the dynamic model in $(a)$ is encoded by the graph $G$ in $(b)$ for simplicity.  
Throughout the paper, we assume that \(G\) is a \textit{reflexive directed graph}, that is, $(i,i)\in \mathfrak{E}_G,\ \forall i\in [n]$. This convention is natural in many real-life scenarios such as ecological networks where we expect the evolution of species to be dependent on itself.

\begin{figure}
\centering
\begin{subfigure}{.58\textwidth}
  \centering
  \includegraphics[width=0.55\linewidth]{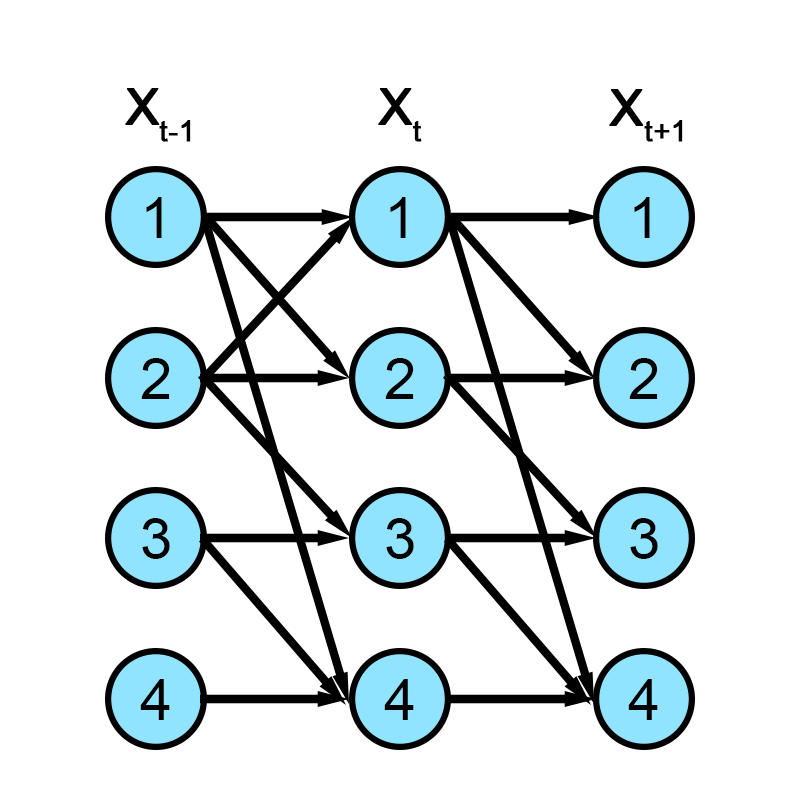}
  \caption{The graphical VAR(1) model}
\end{subfigure}
\begin{subfigure}{.38\textwidth}
  \centering
  \includegraphics[width=0.45\linewidth]{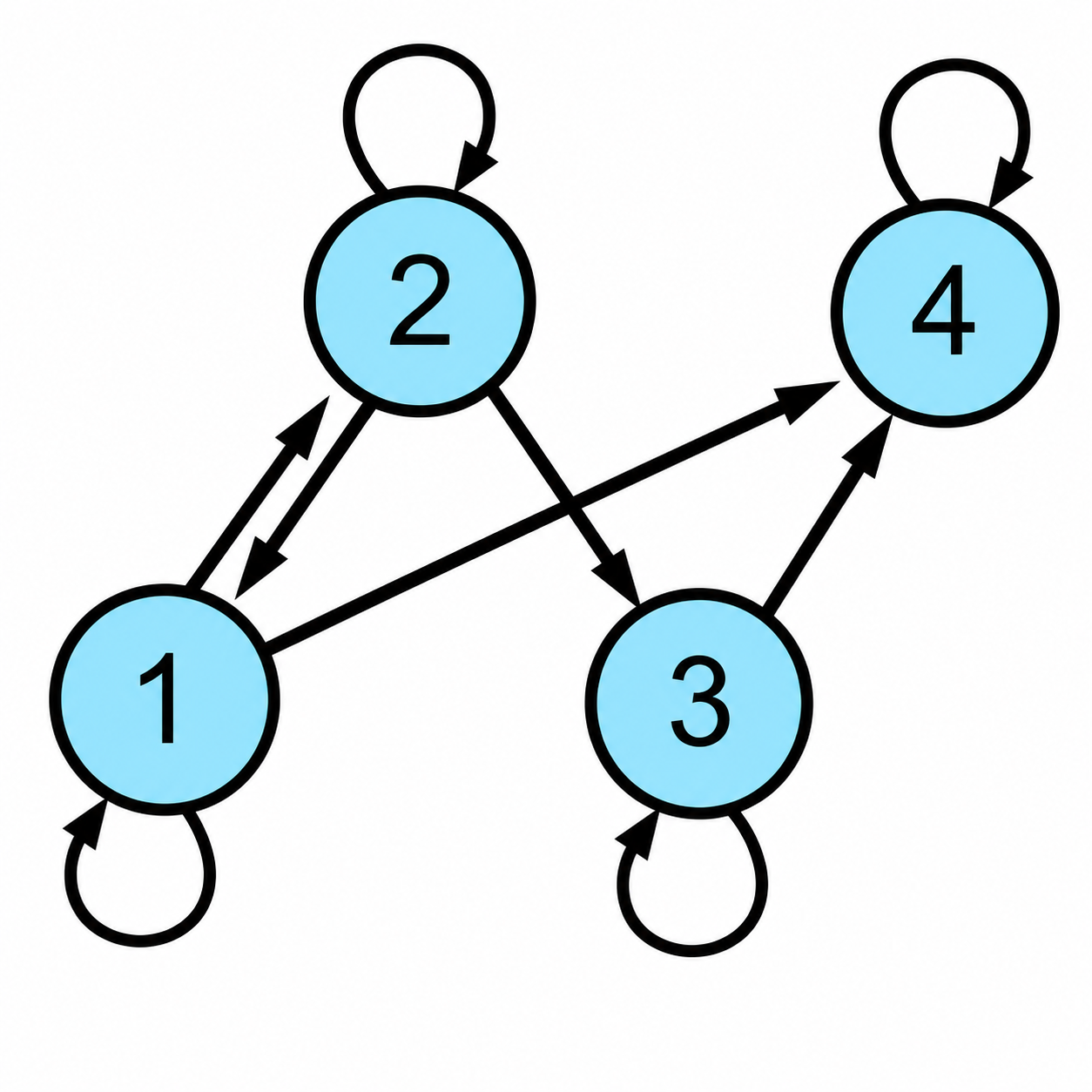}
  \caption{The encoded graph $G$}
\end{subfigure}
\caption{Example of a graphical VAR(1) model and the encoded graph $G$}
\label{fig:grap_mod}
\end{figure}

\paragraph{Motivation.}The motivation of this paper can be found in ecological research. Ecosystems are often modeled by dynamical systems. For example, in the model of Robert May (\cite{may1972will}), the linearization of the dynamics of abundancies or densities of $n$ interacting species around their equilibrium leads to the system:
\begin{equation}\label{eq:eco_may}
    \frac{\mathrm{d}X}{\mathrm{d}t} = AX,
\end{equation}
where $A$ is a deterministic $n\times n$ matrix that represents interactions among species through various mechanisms such as competition, predation or facilitation. The equation \eqref{eq:VAR} can be seen as a stochastic and discrete in time version of \eqref{eq:eco_may}.

VAR(1) models can also arise when we consider the (vectorial) fluctuation process $X_t$ associated with individual-based models converging to Lotka-Volterra ordinary differential equations. The fluctuation process is then shown to satisfy a continuous time version of \eqref{eq:VAR} with a noise having a diagonal covariance matrix (see \cite{akjouj2024complex} for more precisions).
More importantly, it is reasonable to start from a basic and classical model also because, as pointed out later in this section, the study of dynamical system with only information from a steady state is a subject rarely explored.

The reason for which we are particularly considering a stationary setting lies in the sampling insufficiency problem of ecological research. It is a central topic for ecologists to reconstruct the dynamic interaction network, such as the support of $\mathbf{A}$ in (\ref{eq:eco_may}) or the support of $\Lambda$ in (\ref{eq:VAR}), from the abundance data captured in the ecosystem, since studying the dynamic influence among species is crucial to understanding the evolution of the abundances of species and thus helps to prevent extinction. However, as pointed out by \cite{legendre2010community}, fine-scale replication sampling such as time-series sampling is generally too costly and are often sacrificed in practice. Instead, the limited resources (manpower, time, money) are used to cover as much ground as possible during each sampling campaign. Consequently, a considerable amount of abundance data available covers a large number of locations but each of them is sampled only once, making classical time-series analysis inapplicable. For example, \cite{chaffron2021environmental} inferred a global ocean cross-domain plankton co-occurrence network with one-time-samplings of $200$ stations. It is true that there are cases where each of the sampling sites is sampled multiple times, but the temporal gap between each sampling campaign is generally too large. For instance, \cite{zhang2023habitat} sampled for plant–aphid–ant interactions on $18$ forest islands once a year from 2020 to 2021. There are also researches that collect data regularly, e.g. monthly, for each sampling site, but the data points are not enough to conduct classical time-series analysis, such as \cite{simmons2019abundance}, which collected plant and humming bird abundance over $19$ sites monthly but only over a little more than a year. 

To resolve the sampling insufficiency problem and potentially make the planning of sampling campaigns easier for ecologists, we seek for a way to reconstruct the dynamic interaction network of an ecosystem with only i.i.d. samples of a final stationary distribution of the dynamical system, instead of time-series observations. As a first step of solving this problem, this paper assumes that we have the final state of the VAR(1) model, i.e., the matrix $\Sigma$ in (\ref{eq:var1_sig_lam}), from which we try to recover the dynamic interaction network, i.e., the support of $\Lambda$. Although in practice, we will have i.i.d. observations from the stationary distribution and never the true $\Sigma$, it is worth verifying that the network is identifiable with the true $\Sigma$, because if not, it's highly unlikely to be identifiable with real data.

\paragraph{Related work.}
To the best of our knowledge, the identifiability of VAR(1) parameters in a stationary setting has rarely been explored in the existing literature. A majority of previous work that has dealt with the identifiability of the VAR(1) model, and achieved significant and systematic results, relied on observations that were sampled as time series (e.g. \cite{gong2015discovering}, \cite{tank2019identifiability}). This is reasonable because the parameters involve dynamic interactions among the components of the random process $X_t$, making it natural to expect that it should only be inferred from dynamic data. Nevertheless, there is a literature that deals with the identifiability of the interaction matrix $\Lambda$ of VAR(1) in a stationary setting. \cite{young2019identifying} provides a necessary identifiability condition based on the number of parameters to estimate by counting the parameters. However, this condition only excludes a small subset of cases where the interaction matrix is not identifiable, while we are more interested in sufficient conditions for identifiability. 

In contrast to the comparatively limited existing literature on stationary VAR(1) models, the structural identifiability of two closely related, yet distinct, classes of models -- linear structural equation models (SEMs) and continuous Lyapunov models (LMs) -- have been more extensively studied and remain an active area of research. This line of work typically relies on algebraic-statistical methods, for which a comprehensive introduction can be found in \cite{sullivant2018algebraic}.

A linear structural equation model (SEM) specifies a random vector $X$ through the system
\begin{equation*}
    X = \Lambda^{T}X + \epsilon,\ \epsilon \sim \mathcal{N}(0,\Omega),
\end{equation*}
where $\Lambda$ is a deterministic matrix representing the direct effects among random variables. When $I-\Lambda$ is invertible, the system has a unique solution $X=(I-\Lambda)^{-T}\epsilon$. Consequently, $X$ is Gaussian with covariance matrix $\Sigma$. Its inverse, the precision matrix $K$, satisfies
\begin{equation}\label{eq:sem_sig_lam}
    K = \Sigma^{-1}
    = (I-\Lambda)\Omega^{-1} (I-\Lambda)^{T}.
\end{equation}
The parameter identifiability of linear SEMs has been extensively studied; see, for instance, \cite{drton2018algebraic} for a review of recent developments. Important contributions include \cite{drton2025identifiability}, which establishes generic identifiability results for graphs allowing directed cycles, and \cite{bongers2021foundations}, which studies identifiability in the presence of latent variables. 

A continuous-time linear model, more precisely an \textit{Ornstein-Uhlenbeck process}, describes a stochastic process $\{X_t\}_{t\geq 0}$ as the solution of the stochastic differential equation
\begin{equation*}
    dX_t = \Lambda^{T}X_t \, dt + \Omega^{1/2} \, dW_t,
\end{equation*}
where $W_t$ is a standard $n$-dimensional Brownian motion. If $\Lambda$ is stable, namely if all eigenvalues of $\Lambda$ have strictly negative real parts, then the process admits a unique stationary distribution. It is centered Gaussian with covariance matrix $\Sigma$, which is characterized by the continuous Lyapunov equation
\begin{equation}\label{eq:continuous_LM_sigma_lam}
    \Lambda^{T}\Sigma + \Sigma \Lambda + \Omega^{1/2}\Omega^{T/2} = 0.
\end{equation}
The identifiability of the drift matrix $\Lambda$ from the stationary covariance matrix $\Sigma$ has received substantial attention. Recent contributions include \cite{amendola2025structural}, which characterizes model equivalence classes for acyclic continuous LMs; \cite{recke2026identifiability}, which proves generic parameter identifiability of $\Lambda$ given any connected graph in a non-Gaussian setting; and \cite{dettling2023identifiability}, which provides sparsity conditions ensuring global identifiability of $\Lambda$.

It is worth noting that the results on continuous LMs do not directly imply analogous results in our setting because the stationary covariance equation is different. Continuous LMs correspond to (\ref{eq:continuous_LM_sigma_lam}), whereas stationary VAR(1) models satisfy (\ref{eq:var1_sig_lam}). For instance, literatures in continuous LM like \cite{amendola2025structural,dettling2023identifiability} define the model as the set of covariance matrices $\Sigma$ satisfying the continuous Lyapunov equation (\ref{eq:continuous_LM_sigma_lam}), and study the algebraic properties of this set. Replacing this equation by the stationary VAR(1) covariance equation changes the underlying parametrization map, and hence the algebraic structure of the resulting model. Therefore, establishing results in our setting requires a separate analysis.

One of the most recently developed method to test generic identifiability of a parametrized statistical model is through distinguishing the Jacobian matroid of the parametrization map. \cite{hollering2021identifiability} develop an algorithm to numerically study the Jacobian matroid and apply it to phylogenetics models, while \cite{drton2025identifiability} treat the problem in a theoretical manner where they characterize mathematically properties of the Jacobian matroid of a linear SEM and successfully derive graphical conditions for generic identifiability. However, the Jacobian matroid is specific to the parametrization map of the model. Results obtained for phylogenetic models or linear SEMs therefore do not directly imply analogous statements for stationary VAR(1) models, nor do they yield generic identifiability results in our setting.

\paragraph{Our contribution.} Our work takes the same starting point as in Drton’s work (\cite{drton2025identifiability}) and seeks to achieve graphical conditions for generic identifiability of VAR(1) from its steady state. Our main contributions include
\begin{enumerate}
    \item \textbf{Targeting the unexplored problem of structural identifiability of VAR(1) from stationary information alone.} Although VAR(1) models are fundamental in multivariate time-series analysis, the extent to which their directed interaction graphs can be recovered from stationary information has received limited attention. Our work provides, to the best of our knowledge, one of the first systematic identifiability results for stationary VAR(1) graph models, clarifying when the underlying dynamic structure can be inferred without access to time-series data.
    \item \textbf{Addressing a more complex identifiability setting induced by discrete-time dynamics.} Although our problem of recovering $\Lambda$ from the stationary covariance matrix $\Sigma$ is closely related to identifiability problems for linear SEMs and continuous LMs, the VAR(1) setting is substantially more difficult. This difficulty arises from the equation (\ref{eq:var1_sig_lam}) -- also known as the discrete Lyapunov equation, whose structure differs fundamentally from the corresponding equations in these two models. Unlike linear SEMs (\ref{eq:sem_sig_lam}), it does not yield a convenient expression for either $\Sigma$ or $\Sigma^{-1}$ without introducing an inverse depending on $\Lambda$. On the other hand, while in continous LM, $\Sigma$ admits a compact integral representation in terms of $\Lambda$ and $\Omega$ (\cite[Proposition~2.1]{varando2020graphical}), this representation is replaced by an infinite series of Kronecker product of $\Lambda$ as written in (\ref{eq:vec_sig_sum}) in the discrete case. This more intricate structure, inherent to discrete-time dynamics, makes the analysis of structural identifiability considerably more challenging.
    \item \textbf{New illustration of trek rule.} One of our main results (Theorem \ref{thm:supp_sig_maxc}) characterizes the non-zero pattern in a generic covariance matrix $\Sigma$ with the existence of a specific graphical structure that we call “maximal classes”, which can be viewed as sets of nodes that receive information from a common "source" in a directed graph (formal definition and relevant reconstruction algorithms are provided in later sections). A maximal class is also a maximal set of nodes that are pairwise connected by \textit{treks} (\cite[Definition~2.6]{sullivant2010trek}) with the same top. This zero pattern, or the \textit{trek rule}, also exists in linear SEM (\cite[Theorem~4.2]{drton2018algebraic}) and continuous LM (\cite[Corollary~2.3]{varando2020graphical}). Our work validate it in VAR(1), offering an additional insight into its generality.
    \item \textbf{Practical graphical identifiability conditions and ecological insight.} We propose easily verifiable graphical conditions for generic identifiability, showing in particular that graphs with different maximal classes generically yield distinct stationary distributions. These conditions are also interpretable, since maximal classes serve as a bridge between dynamical network and the corresponding stationary distribution. We illustrate the applicability of our results through examples inspired by ecological networks.
\end{enumerate}

\paragraph{Outline.}The rest of this paper is organized as follows: Section \ref{sec:VAR_iden} formally defines the statistical model and introduces the definitions and testing criteria for global and generic identifiability with an overview of the main results of this paper in the end. Section \ref{sec:Jacobian_structure} introduces results on the Jacobian structure of the parametrization of the model. Section \ref{sec:maximal_classes} presents the concept of “maximal classes” for directed graphs and its relationship with the support of the covariance matrix $\Sigma$, which is the core of this study. Section \ref{sec:iden_res} introduces two sufficient conditions of generic identifiability based on maximal classes, one of which is only valid for characterizing models with the same dimension, while the other one can be extended to models with any dimension. A result that allows us to calculate the dimension of the model for reflexive graphs with no reciprocal edges, is also provided to make the two conditions applicable. Section \ref{sec:illustrations} is an illustration of how our results could be applied in ecological research.

\section{Identifiability of the VAR(1) model in a stationary setting}\label{sec:VAR_iden}
In this section, we start with introducing the parametrization and statistical model of our settings, then define global and generic identifiability, along with the identifiability criteria used throughout this paper. Finally, we provide an overview of our main results, i.e., sufficient conditions for generic identifiability.

The definition and criteria for identifiability are based on the concept of a \textit{generic point}. We define a point in a set as generic if it does not lie within a proper (i.e., strictly included) algebraic subvariety of the set. In concrete terms, a generic point is any point in the set except for those that satisfy a specific non-trivial polynomial equation. Because such proper algebraic subvarieties have Lebesgue measure zero, another interpretation is that a parameter value sampled from a continuous probability distribution is generic with probability one.

\subsection{Problem settings}\label{sec:Problem_settings}
Given a VAR(1) model (\ref{eq:VAR}) and the associated directed graph $G = \left(V,\mathfrak{E}_G\right)$, where $V = [n] = \{1,2,\cdots,n\}$, define the set of interaction matrices $M_G$ associated to a graph $G$ as:
\begin{equation*}
	M_G := \left\{\Lambda=\left(\lambda_{ij}\right)\in M_n\left(\mathbb{R}\right) \mid \rho(\Lambda) < 1,\mbox{ and } \lambda_{ij} = 0 \mbox{ if } \left(i,j\right)\notin \mathfrak{E}_G\right\},
\end{equation*}
where $\rho(\cdot)$ is the spectral radius, and the condition $\rho(\Lambda) < 1$ ensures the existence of the equilibrium. The \textit{parameter space} is then $M_G\times \mathbb{R}^+$, corresponding to elements of $\Lambda$ and $\omega$.

Next, we formally define the model.

\begin{definition}\label{def:mod_para_map}
For the stationary VAR(1) model corresponding to a graph $G$, define $\mathbf{M}_G$ the following set of matrices:
\begin{equation*}
	\mathbf{M}_G = \left\{\Sigma \mid \Sigma = \phi_G\left(\Lambda,\omega\right), \Lambda \in M_G, \omega\in\mathbb{R}^+\right\},
\end{equation*}
where
\begin{align*}
	\phi_G : M_G \times \mathbb{R}^+ & \rightarrow M_n\left(\mathbb{R}\right) \\
	\left(\Lambda,\omega\right) & \mapsto \Sigma, \mbox{ s.t. } \Sigma = \Lambda^T\Sigma\Lambda + \omega I_n
\end{align*}
is the \textit{parametrization map} defined by (\ref{eq:var1_sig_lam}). By abuse of notation, we call the set $\mathbf{M}_G$ the \textit{stationary VAR(1) model}.
\end{definition}

\begin{remark}[Structural Properties]
We highlight two structural characteristics of the parameter space $M_G \times \mathbb{R}^+$ and the model $\mathbf{M}_G$:

\vspace{0.5em}
\noindent \textbf{1. Nested structure:} The definition of $M_G$ allows for edge weights to vanish (i.e., $\lambda_{ij}=0$ for $(i,j)\in\mathfrak{E}_G$). Consequently, the parameter space of any subgraph $G' \subset G$ is naturally embedded within $M_G$. This inclusion property extends to the models (i.e., $\mathbf{M}_{G'} \subseteq \mathbf{M}_G$), ensuring a continuous transition between the model $\mathbf{M}_G$ and its sub-models.

\vspace{0.5em}
\noindent \textbf{2. Error structure:} The covariance matrix of the error term in (\ref{eq:VAR}) is assumed to be diagonal with identical values $\omega$. While this assumption is central to our calculations, the underlying framework is generalizable to other diagonal matrices with distinct entries. However, the calculations do not extend to non-diagonal matrices. We adopted the identical diagonal assumption to minimize the number of parameters to estimate, allowing us to focus specifically on the reconstruction of $\Lambda$.
\end{remark}

In this paper, we study the identifiability of a finite family of stationary VAR(1) models, i.e., assume that we are given a finite list of possible graphs, we try to see under what conditions they yield different stationary distributions, and in particular different stationary covariance relationships. This is particularly useful in ecological research where we could have prior information about the network, and were able to reduce the network to a finite list of candidates. Besides, as illustrated in Example \ref{eg:glb_iden} below, identifiability of the underlying graph from the covariance matrix $\Sigma$ in a conventional sense is not achieved because there exist two networks that yield the same stationary distribution.

For a finite family of stationary VAR(1) models, we characterize them by studying the dimension of the intersection of the images of the parametrization map, in the sense that two models are generically identifiable if they only intersect in a set with smaller dimension (detailed definitions of identifiability introduced in the next section). The dimension of the model $\mathbf{M}_G$ is defined as follows:
\begin{equation*}
    \dim\left(\mathbf{M}_G\right) := \dim\left(\overline{\mathbf{M}_G}\right),
\end{equation*}
where $\overline{\mathbf{M}_G}$ denotes the Zariski closure of $\mathbf{M}_G$ in the ambient affine space of symmetric positive definite matrices. The dimension on the right hand side is the usual Krull dimension of this algebraic variety. Since the parametrization map is a rational function, i.e., it is a quotient of polynomials, (proof in Appendix \ref{app:para_rat}), the dimension of $\overline{\mathbf{M}_G}$ is the rank of the (transpose of the) Jacobian matrix of the parametrization map evaluated at a \textit{generic point}. At regular points of the model, the algebraic dimension coincides with the local dimension of $\mathbf{M}_G$ viewed as a smooth manifold. A brief review of relevant notions from Zariski topology is provided in Appendix \ref{app:alge_no}.

The next section presents the definitions for global and generic identifiability.

\subsection{Global and generic identifiability}\label{sec:gene_iden}

\begin{definition}\label{def:glb_iden}
Given a family of stationary VAR(1) models $\left\{\mathbf{M}_k\right\}_{k=1}^K$ and associated graphs $\left\{G_{k}=\left(
V,\mathfrak{E}_{G_k}\right)\right\}_{k=1}^K$, where $K\in\mathbb{N}$, then the model, i.e., the discrete parameter $k$ is \textit{globally identifiable} if for any distinct pair $\left(k_1,k_2\right)$ of values from $1$ to $K$, $\mathbf{M}_{k_1}\cap\mathbf{M}_{k_2}=\emptyset$.
\end{definition}

In fact, global identifiability is a highly restrictive property of the models. There exist cases where models with different underlying graphs, i.e., different supports of $\Lambda$, yield the same $\Sigma$ (see Example \ref{eg:glb_iden}). 

\begin{example}\label{eg:glb_iden}
Consider the case where $n=3$, and the following two interaction matrices, which represent different graphs:
\begin{equation*}
    \Lambda_1 = \left[\begin{array}{ccc}
        0.50 & 0.70 & 0 \\
        0 & 0.90 & 0 \\
        0 & 0.80 & 0.40
    \end{array}\right], \quad \Lambda_2 = \left[\begin{array}{ccc}
        0.50 & 0.67 & -0.01 \\
        0 & 0.94 & 0.02 \\
        0 & 0 & 0.38
    \end{array}\right]
\end{equation*}
and if we let $\omega$ in both cases equals to $1$, then by (\ref{eq:var1_sig_lam}), both models yield the following covariance matrix:
\begin{equation*}
    \Sigma = \left[\begin{array}{ccc}
        1.33 & 0.85 & 0 \\
        0.85 & 22.85 & 0.60 \\
        0 & 0.60 & 1.19
    \end{array}\right].
\end{equation*}
\end{example}

Therefore, we focus instead on a less restrictive definition of identifiability, called \textit{generic identifiability}, which is widely used in the field of algebraic statistics. It allows the images of parametrization maps to intersect only in a set with Lebesgue measure zero.

\begin{definition}\label{def:gene_iden_min}
Let $\left\{\mathbf{M}_k\right\}_{k=1}^K$ be a family of stationary VAR(1) models as in Definition \ref{def:glb_iden}. Then the discrete parameter $k$ is \textit{generically identifiable} if for any distinct pair $\left(k_1,k_2\right)$ of values from $1$ to $K$,
\begin{equation*}
    \dim\left(\mathbf{M}_{k_1}\cap\mathbf{M}_{k_2}\right)<\min\left\{\dim\left(\mathbf{M}_{k_1}\right),\dim\left(\mathbf{M}_{k_2}\right)\right\}.
\end{equation*}
\end{definition}

The geometric interpretation of Definition \ref{def:gene_iden_min} is that a family of models are generically identifiable if the intersection of any two models in the family is a Lebesgue measure zero subset of both models. But this definition also implies that in some circumstances, models with different dimensions are not generically identifiable, which is not desired. Instead, the following definition of generic identifiability is used. 

\begin{definition}\label{def:gene_iden_max}
Let $\left\{\mathbf{M}_k\right\}_{k=1}^K$ be a family of stationary VAR(1) models as above. Then the discrete parameter $k$ is \textit{generically identifiable} if for each pair $\left(k_1,k_2\right)$ of values of $k$, where $k_1\neq k_2$, 
\begin{equation*}
    \dim\left(\mathbf{M}_{k_1}\cap\mathbf{M}_{k_2}\right)<\max\left\{\dim\left(\mathbf{M}_{k_1}\right),\dim\left(\mathbf{M}_{k_2}\right)\right\}.
\end{equation*}
\end{definition}

Definition \ref{def:gene_iden_max} states that the models in the family are generically identifiable from each other if the intersection of any two models in the family is a Lebesgue measure zero subset of the union of the models. This definition yields a simple criterion: two stationary VAR(1) models $\mathbf{M}_1$, $\mathbf{M}_2$ are generically identifiable if
\begin{equation*}
    \dim\left(\mathbf{M}_1\right) \neq \dim\left(\mathbf{M}_2\right).
\end{equation*}
This property is one of the main tools that we used in this paper to test generic identifiability. Note that it does not imply that two models whose graphs are nested are generically identifiable, because as illustrated by the third row of Table \ref{tab:sum_iden_res} at the end of the paper, adding one edge to a graph might not change the dimension of the corresponding model. On the other hand, this property indicates that if we find a way to access the dimensions of the models, we can focus on the identifiability of models with the same dimension, in which case Definitions \ref{def:gene_iden_min} and \ref{def:gene_iden_max} are equivalent.

In fact, there exist many tools to test generic identifiability \cite[Chapter~16]{sullivant2018algebraic}. This paper applies one of them that uses the Jacobian matroid derived from the parametrization of the model, developed by \cite{drton2025identifiability}. The following section provides an introduction to this method.

\subsection{Identifiability criteria}
Before introducing the tools to test generic identifiability, we present some necessary algebraic notions.

\begin{definition}
Let $\phi$ be a $\mathcal{C}^1$ map from $\mathbb{R}^d$ to $\mathbb{R}^p$: $\phi\left(\theta_1,\cdots,\theta_d\right) = \left(\phi_1\left(\bm{\theta}\right),\cdots,\phi_p\left(\bm{\theta}\right)\right)$. Then the Jacobian matrix of this map is:
\begin{equation*}
    \mathbf{J}_\phi=\left(\frac{\partial\phi_i}{\partial\theta_j}\right), 1\leq i\leq p, 1 \leq j\leq d.
\end{equation*}
\end{definition}

Next we define the Jacobian matrix for the stationary VAR(1) model, which is the transpose of the usual Jacobian matrix of the parametrization map.

\begin{definition}
Let $\mathbf{M}_G$ be a stationary VAR(1) model, and $\phi_G$ in Definition \ref{def:mod_para_map} parametrizes $\mathbf{M}_G$. Then the Jacobian matrix of the model is
\begin{equation*}
    \mathbf{J}_G=\left(\frac{\partial\phi_j}{\partial\theta_i}\right), 1\leq i\leq E_G+1, 1 \leq j\leq \frac{n(1+n)}{2},
\end{equation*}
where $E_G$ is the number of edges in $G$, $\phi_j$'s are the distinct entries of $\Sigma$, and $\theta_i$'s are the entries of $\Lambda$ and $\omega$.
\end{definition}

Now we introduce the Jacobian matroid, the main object used to characterize models. To do so, we first introduce the linear matroid associated with a matrix. The general definition of a matroid is omitted here, as it is not necessary for the tools used in this paper. We refer the interested readers to \cite{sullivant2018algebraic} for further details.

\begin{definition}\label{def:linear_matroid}
Let $A\in M_{m\times n}\left(\mathbb{R}\right)$ be a matrix, then the \textit{linear matroid} of $A$ is the pair $\left\{E,\mathcal{I}\right\}$, where $E=\{1,\cdots,n\}$, representing the $n$ columns of $A$, and 
\begin{equation*}
    \mathcal{I} = \left\{S\subseteq E \mid A^S \mbox{ are linearly independent}\right\},
\end{equation*}
where $A^S$ represents the set of columns of $A$ corresponding to the coordinates $S$.
\end{definition}

\begin{example}
Let
\begin{equation*}
    A = \left[\begin{array}{cccc}
        2 & 0 & 2 & 4 \\
        0 & 1 & 1 & 0 \\
        0 & 0 & 1 & 0 \\
        4 & 0 & 4 & 8
    \end{array}\right],
\end{equation*}
then the linear matroid of $A$ is $\left\{E,\mathcal{I}\right\}$, where $E=\left\{1,2,3,4\right\}$, representing the $4$ columns of $A$ respectively, and
\begin{equation*}
    \mathcal{I} = \left\{\emptyset, \left\{1\right\}, \left\{2\right\}, \left\{3\right\}, \left\{4\right\}, \left\{1,2\right\}, \left\{1,3\right\}, \left\{2,3\right\}, \left\{2,4\right\}, \left\{3,4\right\}, \left\{1,2,3\right\}, \left\{2,3,4\right\}\right\}.
\end{equation*}
Note that $\left\{1,4\right\}$ is not included in the matroid because the first and the fourth column of $A$ are not independent, which also excludes $\left\{1,2,4\right\}$, $\left\{1,3,4\right\}$ and $\left\{1,2,3,4\right\}$.
\end{example}

\begin{definition}
The \textit{Jacobian matroid} of a stationary VAR(1) model $\mathbf{M}_G$, denoted as $\mathcal{J}\left(\mathbf{M}_G\right)$, is the linear matroid of the Jacobian matrix.
\end{definition}

By abuse of notation, we identify $\mathcal{J}\left(\mathbf{M}_G\right)$ with $\mathcal{I}$ in Definition \ref{def:linear_matroid}, representing the set of linearly independent columns sets of the Jacobian matrix.

Next, we introduce the criteria for generic identifiability based on Jacobian matroids. The criterion is extensively introduced in \cite{sullivant2018algebraic} and \cite{hollering2021identifiability}. 



\begin{proposition}\cite[Proposition~10]{hollering2021identifiability}\label{prop:gene_iden_J_dim}
Let $\mathbf{M}_1$ and $\mathbf{M}_2$ be two stationary VAR(1) models corresponding to graphs $G_1$ and $G_2$ with the same set of nodes. Without loss of generality, assume that $\dim\left({\mathbf{M}_1}\right)\geq\dim\left({\mathbf{M}_2}\right)$. If there exists a subset $S$ of the coordinates such that
\begin{equation*}
    S\in \mathcal{J}\left(\mathbf{M}_2\right)\backslash\mathcal{J}\left(\mathbf{M}_1\right),
\end{equation*}
then $\dim\left(\mathbf{M}_1\cap\mathbf{M}_2\right)<\min\left\{\dim\left(\mathbf{M}_1\right),\dim\left(\mathbf{M}_2\right)\right\}$.
\end{proposition}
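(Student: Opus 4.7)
\textbf{Proof plan for Proposition \ref{prop:gene_iden_J_dim}.}

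The overall strategy is to exhibit a proper algebraic subvariety of $\overline{\mathbf{M}_2}$ (the Zariski closure) that already contains $\mathbf{M}_1\cap\mathbf{M}_2$, and then invoke irreducibility of $\overline{\mathbf{M}_2}$ to obtain a strict drop in dimension. The $|S|$-tuple of coordinates $S$ is exactly the obstruction we exploit: it is independent in the Jacobian of $\mathbf{M}_2$ but dependent in that of $\mathbf{M}_1$, and geometrically this means that the projection $\pi_S:\mathbb{R}^{n(n+1)/2}\to\mathbb{R}^{|S|}$ onto the coordinates in $S$ behaves very differently on the two models.

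First I would recall that, by the algebraic characterization of dimension used in the paper (rank of the Jacobian at a generic point, see Appendix \ref{app:alge_dim}), the dimension of the image of $\pi_S\circ\phi_{G_i}$ equals the rank of the submatrix of $\mathbf{J}_{G_i}$ consisting of the columns indexed by $S$. Since $S\in\mathcal{J}(\mathbf{M}_2)$, this rank is $|S|$, so $\pi_S(\mathbf{M}_2)$ has dimension $|S|$. Since $S\notin\mathcal{J}(\mathbf{M}_1)$, this rank is strictly less than $|S|$, so the Zariski closure $\overline{\pi_S(\mathbf{M}_1)}=:V\subset\mathbb{R}^{|S|}$ satisfies $\dim V<|S|$. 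Consequently $\mathbf{M}_1\subset\pi_S^{-1}(V)$, and therefore
\begin{equation*}
    \mathbf{M}_1\cap\mathbf{M}_2\;\subset\;\pi_S^{-1}(V)\cap\overline{\mathbf{M}_2}.
\end{equation*}

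The second step is to show that the right-hand side is a proper closed subvariety of $\overline{\mathbf{M}_2}$. Note that $\overline{\mathbf{M}_2}$ is irreducible because it is the Zariski closure of the image, under the polynomial parametrization $\phi_{G_2}$, of the irreducible set $M_{G_2}\times\mathbb{R}^+$. If $\overline{\mathbf{M}_2}$ were contained in $\pi_S^{-1}(V)$, then $\pi_S(\mathbf{M}_2)\subset V$ and hence $\dim\pi_S(\mathbf{M}_2)\leq \dim V<|S|$, contradicting the computation above. Therefore $\pi_S^{-1}(V)\cap\overline{\mathbf{M}_2}$ is a proper closed subvariety of the irreducible variety $\overline{\mathbf{M}_2}$, and so $\dim\bigl(\pi_S^{-1}(V)\cap\overline{\mathbf{M}_2}\bigr)<\dim\overline{\mathbf{M}_2}=\dim\mathbf{M}_2$. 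Combining with the inclusion from the previous step and the assumption $\dim\mathbf{M}_1\geq\dim\mathbf{M}_2$ gives $\dim(\mathbf{M}_1\cap\mathbf{M}_2)<\dim\mathbf{M}_2=\min\{\dim\mathbf{M}_1,\dim\mathbf{M}_2\}$.

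The step I expect to be most delicate is the clean separation between the semi-algebraic models $\mathbf{M}_i$ and their Zariski closures $\overline{\mathbf{M}_i}$: the dimensional rank arguments really take place on the closures, and one must be careful that projecting the image coincides (up to Zariski closure) with the image of a polynomial map, so that its dimension is captured by the generic rank of the Jacobian submatrix. Once this dictionary is installed, the irreducibility of $\overline{\mathbf{M}_2}$ makes the final strict inequality immediate. No further calculation specific to the VAR(1) parametrization is needed; the statement is genuinely a matroid-geometric one and applies verbatim once the Jacobian rank interpretation of dimension is in hand.
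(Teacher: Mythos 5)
Your argument is correct. Note that the paper does not prove this proposition itself: it simply defers to \cite[Proposition~10]{hollering2021identifiability}, whose proof runs through the same matroid--geometric dictionary you use, organized contrapositively (if $\dim(\mathbf{M}_1\cap\mathbf{M}_2)$ were not smaller, irreducibility of $\overline{\mathbf{M}_2}$ would force $\overline{\mathbf{M}_2}\subseteq\overline{\mathbf{M}_1}$, and containment of the varieties forces every set independent for $\mathbf{M}_2$ to be independent for $\mathbf{M}_1$). Your direct version — trapping $\mathbf{M}_1\cap\mathbf{M}_2$ inside $\pi_S^{-1}(V)\cap\overline{\mathbf{M}_2}$ with $V=\overline{\pi_S(\mathbf{M}_1)}$ and using irreducibility to get the strict dimension drop — is equivalent in substance and sound, so you have effectively supplied the proof the paper outsources. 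Two cosmetic points: the parametrization $\phi_{G_2}$ is rational rather than polynomial (its denominators do not vanish on $M_{G_2}\times\mathbb{R}^+$, by Proposition \ref{prop:I_lam_mea_0} and Appendix \ref{app:para_rat}, so the irreducibility-of-the-image-closure argument still applies), and the identification of $\dim\overline{\pi_S(\mathbf{M}_i)}$ with the generic rank of the $S$-columns of $\mathbf{J}_{G_i}$ should be stated as holding at a generic parameter point, which is exactly how the paper defines the Jacobian matroid, so no gap results.
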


In Section \ref{sec:dim_mod}, we will prove that the dimension of a stationary VAR(1) model. i.e., the rank of the Jacobian matrix, is calculable for a subset of graphs. Therefore, we can focus specially on the identifiability of models with the same dimension, in which case the role of the two models are equivalent. Proposition \ref{prop:gene_iden_J_dim} then indicates that two models $\mathbf{M}_1$, $\mathbf{M}_2$ with the same dimension are generically identifiable if there exists a subset $S$ of the coordinates such that
\begin{equation*}
    S\in \mathcal{J}\left(\mathbf{M}_1\right)\backslash\mathcal{J}\left(\mathbf{M}_2\right) \mbox{ or } S \in \mathcal{J}\left(\mathbf{M}_2\right)\backslash\mathcal{J}\left(\mathbf{M}_1\right).
\end{equation*}


Definition \ref{def:gene_iden_max} and Proposition \ref{prop:gene_iden_J_dim} will be the main tools used throughout this paper to test generic identifiability. 

\subsection{Main results}
In this paper, we introduce maximal classes for directed graphs (formally defined in Section \ref{sec:maximal_classes}). We demonstrate that maximal classes are strongly related to the zero columns of the Jacobian matrix of the model (Proposition \ref{prop:J_G_0}), based on which we conclude that among models with the same dimension, two models with different sets of maximal classes are generically identifiable (Theorem \ref{thm:iden_maxc_dim}). For a general family of models, where dimensions may not coincide, we give a more restrictive sufficient condition for generic identifiability based on maximal classes in Theorem \ref{thm:iden_maxc}. Finally, we derive a formula to calculate the dimension of the model for a class of reflexive graphs with no reciprocal edges (Theorem \ref{thm:rank_no_multi_edges}), which enables us to characterize more models since different dimensions indicate generic identifiability by Definition \ref{def:gene_iden_max}. A summary and illustration of all the results in this paper are presented in Table \ref{tab:sum_iden_res} at the end of Section \ref{sec:iden_res}.

\section{Jacobian Structure of VAR(1) in the Stationary Setting}\label{sec:Jacobian_structure}
In order to study the Jacobian matroid of the model, we need to calculate the Jacobian matrix. The goal of this section is to provide an explicit formula for the Jacobian matrix $\mathbf{J}_G$ for any directed graph $G$. Starting by deriving the Jacobian matrix for complete graphs, we then define a projection that maps it to that of any graphs, and in the end, we present a simplified formula for $\mathbf{J}_G$ based on the projection.

For any stationary VAR(1) model $\mathbf{M}_G$, the Jacobian matrix $\mathbf{J}_G$ is a matrix of size $\left(E_G+1\right)\times \left(n(n+1)\right) / 2$, where $E_G=|\mathfrak{E}_G|$ is the number of edges in $G$. Each row of $\mathbf{J}_G$ corresponds to $\lambda_{ij}$ where $(i,j)\in\mathfrak{E}_G$ or $\omega$, and each column corresponds to $\sigma_{ab}$ where $a\leq b\in[n]$. Note that we only consider $a\leq b$ because $\sigma_{ab}=\sigma_{ba}$ for all $a,b\in[n]$.

Consider a stationary VAR(1) model $\mathbf{M}_{\overline{G}}$ with $\overline{G}=\left(V,\mathfrak{E}_{\overline{G}}\right)$ complete, i.e., $\mathfrak{E}_{\overline{G}}=\left\{(i,j) \mid i,j\in[n]\right\}$. Denote the Jacobian matrix of this model as $\mathbf{J}$, which is of size $\left(n^2+1\right)\times \left(n(n+1)\right) / 2$. Define an "extended" Jacobian matrix $\overline{\mathbf{J}}$, which is $\mathbf{J}$ with additional columns that correspond to $\sigma_{ab}$ where $a > b\in[n]$, meaning that $\overline{\mathbf{J}}$ is of size $\left(n^2+1\right)\times n^2$. The following lemma provides a formula for this $\overline{\mathbf{J}}$.

\begin{lemma}\label{lem:J_G_ext_complete}
Let $\mathbf{M}_G$ be a stationary VAR(1) model with $G$ complete, and $P_{i,j}\in M_n\left(\mathbb{R}\right)$ for all $i,j\in[n]$ be the identity matrix switching the $i^{th}$ and $j^{th}$ columns. Then the extended Jacobian matrix $\overline{\mathbf{J}}$ satisfies:
\begin{equation}\label{eq:J_extended}
    \overline{\mathbf{J}} = \overline{\mathbf{J}}\left(\Lambda\otimes\Lambda\right) + B,
\end{equation}
where
\begin{equation*}
    B = \left[\begin{array}{c}
        \left(\Sigma\Lambda\otimes I_n\right)\left(I_{n^2} + \mathbf{P}\right) \\ \vec\left(I_{n^2}\right)^T
        \end{array}\right],
\end{equation*}
and
\begin{equation*}
    \mathbf{P}=\prod_{i\geq 1}\prod_{j> i}P_{\left(i-1\right)n+j, \left(j-1\right)n+i}.
\end{equation*}
\end{lemma}
\begin{proof}
The proof is given in Appendix \ref{app:prf_lem_J_G_ext_complete}.
\end{proof}

In Lemma \ref{lem:J_G_ext_complete}, $P_{\left(i-1\right)n+j, \left(j-1\right)n+i}$ is the identify matrix exchanging the $i^{th}$ and $j^{th}$ columns of the $j^{th}$ and $i^{th}$ blocks, and hence $\mathbf{P}$ is just the indentity matrix exchanging columns $n(n-1) / 2$ times. The following example is a visualization of the function of $\mathbf{P}$.

\begin{example}\label{eg:P}
	When $n=3$, $\mathbf{P}$ does the following: for all $a,\cdots,i\in\mathbb{R}$,
	\begin{align*}
		&\left(\left[\begin{array}{ccc}
			a & b & c \\
			d & e & f \\
			g & h & i
		\end{array}\right]\otimes I_3\right) \mathbf{P} \\
        &= \left[\begin{array}{ccccccccc}
			a & & & b & & & c & & \\
			& a & & & b & & & c & \\
			&& a &&& b & & & c \\
			d & & & e & & & f & & \\
			& d & & & e & & & f & \\
			&& d &&& e & & & f \\
			g & & & h & & & i & & \\
			& g & & & h & & & i & \\
			&& g &&& h & & & i \\
		\end{array}\right]\mathbf{P}
        = \left[\begin{array}{ccccccccc}
			a & b & c & & & & & & \\
			& & & a & b & c & & & \\
			&&&&&& a & b & c \\
			d & e & f & & & & & & \\
			& & & d & e & f & & & \\
			&&&&&& d & e & f \\
			g & h & i & & & & & & \\
			& & & g & h & i & & & \\
			&&&&&& g & h & i \\
		\end{array}\right]
	\end{align*}
\end{example}

Let $\mathbf{M}_G$ be a stationary VAR(1) model with $G=\left(V,\mathfrak{E}_G\right)$, possibly not complete. Define again a corresponding extended Jacobian matrix $\overline{\mathbf{J}}_G$, which is $\mathbf{J}_G$ with additional columns that correspond to $\sigma_{ab}$ where $a > b\in[n]$. Then, $\overline{\mathbf{J}}_G$ is of size $\left(E_G+1\right)\times n^2$. Now we introduce a projection from $\overline{\mathbf{J}}$ to $\overline{\mathbf{J}}_G$, which will then be used to derive an explicit expression of $\overline{\mathbf{J}}_G$.

\begin{definition}\label{def:psi}
Let $\mathbf{M}_G$ be a stationary VAR(1) model with $G=\left(V,\mathfrak{E}_G\right)$ not necessarily complete, and $\overline{\mathbf{J}}$ be the extended Jacobian matrix for a complete graph. Define the projection
\begin{align*}
    \psi_G : M_{\left(n^2+1\right), n^2}\left(\mathbb{R}\right) & \rightarrow M_{\left(E_G+1\right), n^2}\left(\mathbb{R}\right) \\
    \overline{\mathbf{J}} &\mapsto \psi_G\left(\overline{\mathbf{J}}\right),
\end{align*}
as follows: $\psi_G$ removes from $\overline{\mathbf{J}}$ all rows in that correspond to $\lambda_{ij}$ such that $\left(i,j\right)\notin \mathfrak{E}_G$. In the remaining entries, all parameters $\lambda_{ij}$ with $\left(i,j\right)\notin \mathfrak{E}_G$ are set to zero.
\end{definition}
Under this definition, we have
\begin{equation*}
    \psi_G\left(\overline{\mathbf{J}}\right) = \overline{\mathbf{J}}_G.
\end{equation*}
The proof is given in Appendix \ref{app:prf_eq_psi}. By applying this projection to both sides of (\ref{eq:J_extended}), we get
\begin{equation*}
    \overline{\mathbf{J}}_G = \overline{\mathbf{J}}_G\left(\Lambda\otimes\Lambda\right) + \psi_G(B),
\end{equation*}
where $B$ is defined in Lemma \ref{lem:J_G_ext_complete}. We next illustrate how to apply this equation to calculate the extended Jacobian matrix of a specific graph.

\begin{example}
Consider a stationary VAR(1) model $\mathbf{M}_G$ with a directed graph $G=\left(V,\mathfrak{E}_G\right)$, where $V=\left\{1,2,3\right\}$ and $\mathfrak{E}_G=\left\{\left(1,1\right),\left(2,2\right),\left(3,3\right),\left(1,3\right),\left(2,1\right)\right\}$. The graph and the corresponding interaction matrix are given below.

\begin{minipage}{0.48\textwidth}
\centering
\includegraphics[width=0.35\textwidth]{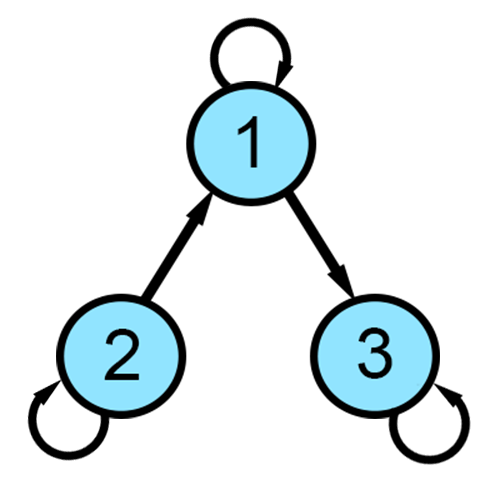}
\end{minipage}
\begin{minipage}{0.48\textwidth}
\begin{equation*}
    \Lambda = \left[\begin{array}{ccc}
        \lambda_{11} & 0 & \lambda_{13} \\
        \lambda_{21} & \lambda_{22} & 0 \\
        0 & 0 & \lambda_{33}
    \end{array}\right].
\end{equation*}
\end{minipage}                                                                                                                      

In this case, all computations can be explicit:
\begin{equation*}
    \Sigma\Lambda = \left[\begin{array}{ccc}
        \sigma_{11}\lambda_{11} + \sigma_{12}\lambda_{21} & \sigma_{12}\lambda_{22} & \sigma_{11}\lambda_{13}+\sigma_{13}\lambda_{33} \\
        \sigma_{21}\lambda_{11} + \sigma_{22}\lambda_{21} & \sigma_{22}\lambda_{22} & \sigma_{21}\lambda_{13}+\sigma_{23}\lambda_{33} \\
        \sigma_{31}\lambda_{11} + \sigma_{32}\lambda_{21} & \sigma_{32}\lambda_{22} & \sigma_{31}\lambda_{13}+\sigma_{33}\lambda_{33}
    \end{array}\right].
\end{equation*}
Then the extended Jacobian matrix $\overline{\mathbf{J}}_G$ satisfies:
\begin{equation*}
    \overline{\mathbf{J}}_G = \overline{\mathbf{J}}_GA + \psi_G\left(B\right),
\end{equation*}
where
\begin{align*}
    A &= \Lambda\otimes\Lambda \\
    &= \left[\begin{array}{ccc}
        \begin{array}{ccc}
            \lambda_{11}^2 & 0 & \lambda_{11}\lambda_{13} \\
            \lambda_{11}\lambda_{21} & \lambda_{11}\lambda_{22} & 0 \\
            0 & 0 & \lambda_{11}\lambda_{33}
        \end{array} & \mathbf{0} & \begin{array}{ccc}
            \lambda_{11}\lambda_{13} & 0 & \lambda_{13}^2 \\
            \lambda_{13}\lambda_{21} & \lambda_{13}\lambda_{22} & 0 \\
            0 & 0 & \lambda_{13}\lambda_{33}
        \end{array} \\
        \begin{array}{ccc}
        \lambda_{21}\lambda_{11} & 0 & \lambda_{21}\lambda_{13} \\
        \lambda_{21}^2 & \lambda_{21}\lambda_{22} & 0 \\
        0 & 0 & \lambda_{21}\lambda_{33}
    \end{array} & \begin{array}{ccc}
        \lambda_{22}\lambda_{11} & 0 & \lambda_{22}\lambda_{13} \\
        \lambda_{22}\lambda_{21} & \lambda_{22}^2 & 0 \\
        0 & 0 & \lambda_{22}\lambda_{33}
    \end{array} & \mathbf{0} \\
    \mathbf{0} & \mathbf{0} & \begin{array}{ccc}
            \lambda_{33}\lambda_{11} & 0 & \lambda_{33}\lambda_{13} \\
            \lambda_{33}\lambda_{21} & \lambda_{33}\lambda_{22} & 0 \\
            0 & 0 & \lambda_{33}^2
        \end{array}
    \end{array}\right],
\end{align*}
and
\begin{align*}
    & \psi_G\left(B\right) = \left[\begin{array}{cccc}
        2\left(\sigma_{11}\lambda_{11} + \sigma_{12}\lambda_{21}\right) & \sigma_{12}\lambda_{22} & \sigma_{11}\lambda_{13}+\sigma_{13}\lambda_{33} & \sigma_{12}\lambda_{22} \\
        0 & 0 & \sigma_{11}\lambda_{11} + \sigma_{12}\lambda_{21} & 0 \\
        2\left(\sigma_{21}\lambda_{11} + \sigma_{22}\lambda_{21}\right) & \sigma_{22}\lambda_{22} & \sigma_{21}\lambda_{13}+\sigma_{23}\lambda_{33} & \sigma_{22}\lambda_{22} \\
        0 & \sigma_{21}\lambda_{11} + \sigma_{22}\lambda_{21} & 0 & \sigma_{21}\lambda_{11} + \sigma_{22}\lambda_{21} \\
        0 & 0 & \sigma_{31}\lambda_{11} + \sigma_{32}\lambda_{21} & 0 \\
        1 & 0 & 0 & 0
    \end{array}\right. \\
    &\left.\begin{array}{ccccc}
        0 & 0 & \sigma_{11}\lambda_{13}+\sigma_{13}\lambda_{33} & 0 & 0 \\
        0 & \sigma_{12}\lambda_{22} & \sigma_{11}\lambda_{11} + \sigma_{12}\lambda_{21} & \sigma_{12}\lambda_{22} & 2\left(\sigma_{11}\lambda_{13}+\sigma_{13}\lambda_{33}\right) \\
        0 & 0 & \sigma_{21}\lambda_{13}+\sigma_{23}\lambda_{33} & 0 & 0 \\
        2\sigma_{22}\lambda_{22} & \sigma_{21}\lambda_{13}+\sigma_{23}\lambda_{33} & 0 & \sigma_{21}\lambda_{13}+\sigma_{23}\lambda_{33} & 0 \\
        0 & \sigma_{32}\lambda_{22} & \sigma_{31}\lambda_{11} + \sigma_{32}\lambda_{21} & \sigma_{32}\lambda_{22} & 2\left(\sigma_{31}\lambda_{13}+\sigma_{33}\lambda_{33}\right) \\
        1 & 0 & 0 & 0 & 1
    \end{array}\right].
\end{align*}
In this example, if $\left(I_{n^2} - A\right)$ is invertible, then we will have an explicit formula for $\overline{\mathbf{J}}_G$.
\end{example}

From previous arguments, it's clear that if $\left(I_{n^2}-\Lambda\otimes\Lambda\right)$ is invertible, we would be able to define $\overline{\mathbf{J}}_G$ as a function of $\Lambda$ and $\omega$. In fact, the parameters such that the matrix $\left(I_{n^2} - \Lambda\otimes\Lambda\right)$ is not invertible has Lebesgue measure zero (proof in Appendix \ref{app:prf_I_lam_invert}). Because of the generic settings, we can always assume that $\left(I_{n^2} - \Lambda\otimes\Lambda\right)$ is invertible. This argument of generality and will be applied several times in the following sections.

We now have an explicit formula for the extended Jacobian matrix.

\begin{theorem}\label{thm:J_G_ext_cal}
Let $\mathbf{M}_G$ be a stationary VAR(1) model. Then the extended Jacobian matrix is generically:
\begin{equation*}
    \overline{\mathbf{J}}_G = \psi_G(B)\left(I_{n^2} - \Lambda\otimes\Lambda\right)^{-1},
\end{equation*}
where $B$ is defined in Lemma \ref{lem:J_G_ext_complete}.
\end{theorem}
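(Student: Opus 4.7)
The plan is to treat this result as essentially a direct corollary of Lemma \ref{lem:J_G_ext} combined with Proposition \ref{prop:I_lam_mea_0}. Lemma \ref{lem:J_G_ext} already establishes the matrix identity $\overline{\mathbf{J}}_G = \overline{\mathbf{J}}_G(\Lambda\otimes\Lambda) + \psi_G(B)$, so the first step is a simple algebraic rearrangement: I would move the term $\overline{\mathbf{J}}_G(\Lambda\otimes\Lambda)$ to the left-hand side and factor $\overline{\mathbf{J}}_G$ out on the right, obtaining $\overline{\mathbf{J}}_G\left(I_{n^2} - \Lambda\otimes\Lambda\right) = \psi_G(B)$.

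The second step is to invoke Proposition \ref{prop:I_lam_mea_0}, which guarantees that the set $M_G^0$ on which $\left(I_{n^2} - \Lambda\otimes\Lambda\right)$ fails to be invertible has $\mu_G$-measure zero inside $M_G$. Since the conclusion is stated as a \emph{generic} identity, I can restrict attention to the complement of this null set, on which $\left(I_{n^2} - \Lambda\otimes\Lambda\right)^{-1}$ exists. Right-multiplying the rearranged identity by this inverse immediately yields $\overline{\mathbf{J}}_G = \psi_G(B)\left(I_{n^2} - \Lambda\otimes\Lambda\right)^{-1}$, which is the claim.

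There is no genuine obstacle here: the structural content has already been absorbed into the preceding three lemmas — namely the vec-operator computation in Lemma \ref{lem:J_G_ext_complete}, the projection argument of Lemma \ref{lem:psi} that reduces an arbitrary $G$ to the complete graph case, and their combination in Lemma \ref{lem:J_G_ext} — while the invertibility question is dispatched by Proposition \ref{prop:I_lam_mea_0}. The only subtlety worth emphasizing in the write-up is that the word "generically" in the theorem statement is exactly what licenses the inversion; without it, the identity would fail on the measure-zero locus $M_G^0$. Beyond that, the proof is essentially a one-line manipulation and can be presented accordingly.
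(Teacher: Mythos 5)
Your argument is exactly the paper's: the paper's proof simply cites Lemma \ref{lem:J_G_ext} and Proposition \ref{prop:I_lam_mea_0}, which you have spelled out as the rearrangement $\overline{\mathbf{J}}_G\left(I_{n^2} - \Lambda\otimes\Lambda\right) = \psi_G(B)$ followed by generic inversion. Your write-up is correct and adds only the (welcome) explicit remark that the word ``generically'' is what licenses multiplying by the inverse off the null set $M_G^0$.
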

\begin{proof}
This is a direct result of previous arguments of this section.
\end{proof}

Theorem \ref{thm:J_G_ext_cal} enables us to calculate the extended Jacobian matrix $\overline{\mathbf{J}}_G$ for any directed graph $G$, meaning that we can also calculate the Jacobian matrix $\mathbf{J}_G$, since $\overline{\mathbf{J}}_G$ is just $\mathbf{J}_G$ with additional columns that are identical to a subset of columns of $\mathbf{J}_G$. Recall that the Jacobian matroid is the set of coordinates of columns that are linearly independent. In the next section, we introduce \textit{maximal classes}, which will later be used to study the Jacobian matroid.

\section{Maximal classes}\label{sec:maximal_classes}
In this section, we introduce the definition and properties of maximal classes and demonstrate that they are closely related to the zero entries of the covariance matrix $\Sigma$. We also provide an algorithm for computing the set of maximal classes from a given graph (Algorithm \ref{alg:maxc} in Appendix \ref{app:alg_mc}), together with a heuristic procedure for the inverse problem: reconstructing possible graphs from a prescribed set of maximal classes (Algorithm \ref{alg:maxc_graphs} in Appendix \ref{app:alg_mc}). Combined with Algorithm \ref{alg:sig_maxc}, which reconstructs the set of maximal classes from the support of $\Sigma$, these algorithms provide a first step toward graph reconstruction for stationary VAR(1) models and suggest directions for future work.

\subsection{Definition and properties}\label{sec:mc_def_prop}
Recall that a Strongly Connected Component (SCC) of a directed graph is a subgraph in which every node is reachable from every other node by a directed path. By abuse of notation, we also use SCC to refer to the set of nodes in such a subgraph. An SCC may consist of a single node.

The \textit{in-degree} of a node $v$ in a directed graph is the number of edges directed into $v$. We extend this notion to SCCs as follows. Let $G=(V,\mathfrak{E}_G)$ be a directed graph and $C\subset V$ be an SCC of $G$, the \textit{in-degree of the SCC} is the number of edges entering $C$ from outside, namely
\begin{equation*}
\deg^-(C) = |\left\{\left(v,v'\right)\in \mathfrak{E}_G \mid v'\in C, v\in V\backslash C \right\}|.
\end{equation*}

Throughout the paper, the in-degree of either a node or an SCC is understood to exclude self-loops.

An SCC with in-degree zero plays a distinguished role: when the graph is viewed as a network, such a component can be interpreted as a source feeding the rest of the network. We therefore call any SCC with in-degree zero a \textit{source} of the graph. A maximal class is the maximal set of descendants generated by such a source, as formalized below.
\begin{definition}\label{def:maxc}
Let $S$ be a source of a directed graph and let $i\in S$. The maximal class associated with $S$, denoted by $\mathcal{MC}_{[i]}$, is set of all nodes reachable from $i$ by a directed path.
\begin{equation*}
    \mathcal{MC}_{[i]}=\{j\in V : \text{there exists a directed path from } i \text{ to } j\}.
\end{equation*}
Equivalently, \(\mathcal{MC}_{[i]}\) is the maximal set of descendants of the source \(S\). The nodes in $S$ are called source nodes.
\end{definition}


\begin{example}\label{eg:mc}
\begin{figure}[!tb]
    \centering
    \begin{subfigure}{.48\textwidth}
  \centering
  \includegraphics[width=0.45\linewidth]{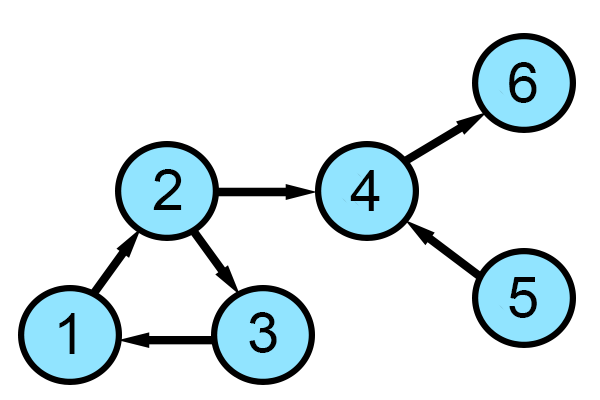}
  \caption{$G_1$}
\end{subfigure}%
\begin{subfigure}{.48\textwidth}
  \centering
  \includegraphics[width=0.45\linewidth]{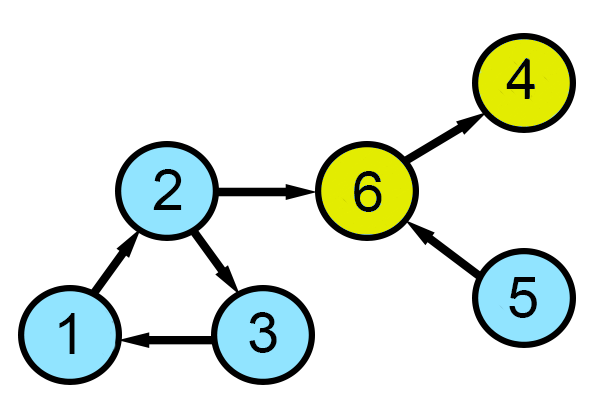}
  \caption{$G_2$}
\end{subfigure}
    \caption{Example of directed graphs}
    \label{fig:Eg_maxc}
\end{figure}

The SCCs of the graph $G_1$ in Figure \ref{fig:Eg_maxc} are:
\begin{equation*}
    \left\{1,2,3\right\},\{4\},\{5\},\{6\},
\end{equation*}
It's clear that $\left\{1,2,3\right\}$ and $\{5\}$ have in-degree zero, and hence are the sources. Therefore, the set of maximal classes is:
\begin{equation*}
    \left\{\left\{\mathbf{1},\mathbf{2},\mathbf{3},4,6\right\}, \left\{4,\mathbf{5},6\right\}\right\},
\end{equation*}
where the nodes in bold are source nodes of the respective maximal classes. In other words, there are two maximal classes in this graph:
\begin{equation*}
    \mathcal{MC}_{[1]} = \mathcal{MC}_{[2]} = \mathcal{MC}_{[3]} = \{1,2,3,4,6\} \mbox{, and } \mathcal{MC}_{[5]} = \{4,5,6\}.
\end{equation*}
\end{example}

Note that SCCs have also been used to analyze cyclic graphical models; for example, \cite[Section~6]{bongers2021foundations} use them to decompose cyclic structural causal models into feedback blocks that can be collapsed into solution functions under suitable unique-solvability conditions. Although our use of SCCs is different, this connection highlights the broader relevance of SCC-based decompositions in graphical models with cycles.

MC is closely related to the concept of \textit{trek} in linear SEM and continuous LM.
\begin{definition}{\cite[Definition~4.1]{drton2018algebraic}}
A trek $\tau$ from node $i$ to node $j$ is a semi-walk from $i$ to $j$ of the form
\begin{enumerate}
    \item[(a)] 
    \(i \leftarrow i_l \leftarrow \cdots \leftarrow i_1 \leftarrow i_0
    \longleftrightarrow j_0 \rightarrow j_1 \rightarrow \cdots \rightarrow j_r \rightarrow j,\)
    or
    \item[(b)] 
    \(i \leftarrow i_l \leftarrow \cdots \leftarrow i_1 \leftarrow i_0
    \longrightarrow j_1 \rightarrow \cdots \rightarrow j_r \rightarrow j.\)
\end{enumerate}
A trek has a left- and a right-hand side, denoted
\(\operatorname{left}(\tau)\) and \(\operatorname{right}(\tau)\), respectively.
We have \(\operatorname{left}(\tau)=\{i_0,\ldots,i_l,i\}\) and
\(\operatorname{right}(\tau)=\{j_0,\ldots,j_r,j\}\) in case (a), and
\(\operatorname{left}(\tau)=\{i_0,\ldots,i_l,i\}\) and
\(\operatorname{right}(\tau)=\{i_0,j_1,\ldots,j_r,j\}\) in case (b).
In case (b), the top node \(i_0\) belongs to both sides. Both the left- and the right- hand side are allowed to contain no edges.
\end{definition}

Under this definition, a maximal class is a maximal set of nodes that are pairwise connected by treks with the same top, or source node. Indeed, for any directed graph $G=(V,\mathfrak{E}_G)$ and any two nodes $i,j\in V$, the following statements are equivalent
\begin{enumerate}
\item \(i\) and \(j\) belong to the same maximal class;
\item there exists a trek between \(i\) and \(j\);
\item either there exists a node \(k\in V\setminus\{i,j\}\) such that directed paths from \(k\) to \(i\) and from \(k\) to \(j\) both exist, or there exists a directed path from \(i\) to \(j\) or from \(j\) to \(i\).
\end{enumerate}

That said, we still formulate our results in terms of maximal classes because, as shown in the following sections, they capture precisely the information encoded by the support of a generic $\Sigma$ and by the locations of zero columns in the Jacobian matrix; treks do not play this role directly. An MC contains less information about the graph than a trek: it is an unordered set of nodes, whereas a trek is specified by a directed semi-path. For example, in the graphs shown in Figure \ref{fig:Eg_maxc}, swapping nodes 4 and 6 in $G_1$ yields $G_2$. This operation leaves the MCs unchanged, while changing the trek between nodes $5$ and $6$. More importantly, the collection of maximal classes does not determine the graph uniquely, whereas the full collection of treks is sufficient to reconstruct the graph.

The following proposition proves the uniqueness of the set of maximal classes for any graph, which is a fundamental property of maximal classes.

\begin{proposition}\label{prop:maxc_uniq}
Let $G$ be any directed graph, then the set of maximal classes associated with $G$ is unique.
\end{proposition}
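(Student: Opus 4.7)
My plan is to deduce the claim from the uniqueness of the strongly connected component (SCC) decomposition of $G$ together with the uniqueness of reachability.

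First, I would recall the standard graph-theoretic result that any directed graph $G$ admits a unique partition of its vertex set $V$ into strongly connected components, and that the associated condensation, obtained by contracting each SCC to a single vertex, is a DAG. The in-degree of each SCC (counting only edges that enter from outside) is a well-defined invariant of $G$, so the family of source SCCs---those of in-degree zero---is uniquely determined by $G$ alone. Consequently, the set of source nodes, which are precisely the nodes belonging to some source SCC, is likewise intrinsic to $G$.

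Second, by Definition \ref{def:maxc}, for any source node $i$ the maximal class $\mathcal{MC}_{[i]}$ is the set of all vertices reachable from $i$ by a directed path. Reachability is a purely combinatorial property of $(V, \mathfrak{E}_G)$, so $\mathcal{MC}_{[i]}$ is determined by $G$ and $i$ alone. It follows that the family
\[
\{\mathcal{MC}_{[i]} : i \text{ is a source node of } G\} \subseteq 2^V
\]
is uniquely determined by $G$, which is what the proposition claims.

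I do not expect a serious obstacle. The only point worth flagging is that several source nodes can generate the same maximal class (any two source nodes lying in a common source SCC reach the same set of vertices, as noted in the text following Definition \ref{def:maxc}); once the family is regarded as a set of subsets of $V$ rather than as a tuple indexed by source nodes, this redundancy is harmless. An alternative and slightly more self-contained route would be to characterize membership in a common maximal class via the two conditions of Lemma \ref{lem:mc_path} and then read off that the collection of maximal classes depends only on $G$, but the SCC-based argument above is the most economical and directly mirrors the definition.
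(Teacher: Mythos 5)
Your argument is correct and is essentially the paper's own proof: the set of source SCCs (hence source nodes) is intrinsic to $G$ because the SCC decomposition and in-degrees are uniquely defined, and each source then determines its maximal class as a reachability set. The remark that distinct source nodes in a common source SCC generate the same class, handled by viewing the collection as a set of subsets, matches the paper's treatment as well.
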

\begin{proof}
First, two maximal classes of a graph cannot have the same source. This is because by definition \ref{def:maxc}, a maximal class is the set of all nodes reachable by the source, and if two maximal classes have the same source, then they must be the same maximal class. Moreover, for any directed graph $G$, the set of sources is unique since the in-degree of any node or SCC is uniquely defined. Therefore, each source generates a unique maximal class. Hence, the set of maximal classes is unique.
\end{proof}

Using the uniqueness of maximal classes, we provide Algorithm \ref{alg:maxc} in Appendix \ref{app:alg_mc} to compute them from a given graph. The algorithm first identifies the strongly connected components, and then, for each SCC with in-degree zero, extracts the corresponding maximal class by depth-first search. Conversely, to reconstruct graphs associated with a given set of maximal classes, we also provide a heuristic procedure in Algorithm \ref{alg:maxc_graphs} in Appendix \ref{app:alg_mc}.





\subsection{Maximal class and the support of \texorpdfstring{$\Sigma$}{Lg}}\label{sec:sig_to_mc}
In this section, we describe the relationship between maximal classes and the covariance matrix $\Sigma$ and propose an explicit procedure to reconstruct the set of maximal classes from the support of $\Sigma$.

We start with a Lemma on the values of the parameter $\Lambda$.

\begin{lemma}\label{lem:M_P_M_S}
Let $\mathbf{M}_G$ be a stationary VAR(1) model, and $\lambda_{ij}^{[k]}=\left(\Lambda^k\right)_{ij}$, for all $k\in\mathbb{N}$, then generically
\begin{equation*}
    \Lambda \in M^P \cap M^S,
\end{equation*}
where
\begin{align*}
    M^P := &\left\{\Lambda\in M_n\left(\mathbb{R}\right) \mid \forall i,j\in V, \exists \mbox{ a directed path in }G \mbox{ from } i \mbox{ to }j \right. \\
    & \left. \Rightarrow \exists K\in\mathbb{N}, s.t. \lambda_{ij}^{\left[k\right]}\neq 0, \forall k > K\right\}, \\
    M^S := &\left\{\Lambda\in M_n\left(\mathbb{R}\right) \mid \forall i,j\in V, \sum_{k=0}^{+\infty}\sum_{a=1}^n\lambda_{ai}^{\left[k\right]}\lambda_{aj}^{\left[k\right]} = 0 \Leftrightarrow \forall k\in\mathbb{N},a\in\left[n\right], \lambda_{ai}^{\left[k\right]}\lambda_{aj}^{\left[k\right]} = 0,\right. \\ 
    &\left.\mbox{and }\sum_{s=1}^n\sum_{k=0}^{+\infty}\sum_{l=1}^n\lambda_{li}^{\left[k\right]}\lambda_{ls}^{\left[k\right]}\lambda_{sj} = 0 \Leftrightarrow \forall k\in\mathbb{N}, s,l\in\left[n\right], \lambda_{li}^{\left[k\right]}\lambda_{ls}^{\left[k\right]}\lambda_{sj} = 0\right\}.
\end{align*}
\end{lemma}
\begin{proof}
The proof is given in Appendix \ref{app:prf_lem_M_P_M_S}.
\end{proof}

In fact, $M^P$ is a set of matrices $\Lambda$ such that for all $i,j\in[n]$, if there exists a directed path from $i$ to $j$, then after a certain power, the corresponding coefficient $\lambda_{ij}$ stays non-zero. $M^S$ is a set of matrices $\Lambda$ such that the infinite sum of certain elements equals zero is equivalent to requiring that each part of the sum equals zero individually. In other words, we prevent cancellations among elements of different powers of $\Lambda$ in this set.

Because we consider generic properties of the parameters, we assume that $\Lambda\in M^S\cap M^P$. It is reasonable because Lemma \ref{lem:M_P_M_S} implies that $M_G\backslash \left(M^S\cap M^P\right)$ has Lebesgue measure zero. This type of genericity argument was already used in Section \ref{sec:Jacobian_structure}. The assumption is needed to establish the relationship between maximal classes and the support of \(\Sigma\) in Theorem \ref{thm:supp_sig_maxc}, and to derive the Jacobian support characterization in Lemma \ref{lem:supp_Sig_Lam_maxc}. The necessity for this restriction arises from the infinite-series representation of the stationary covariance in the discrete-time setting as shown in (\ref{eq:vec_sig_sum}); in contrast, no such assumption is needed for deriving similar results in linear SEMs (\cite[Theorem~4.2]{drton2018algebraic}) or continuous LMs (\cite[Corollary~2.3]{varando2020graphical}).

\begin{theorem}\label{thm:supp_sig_maxc}
Let $\mathbf{M}_G$ be a stationary VAR(1) model with corresponding directed graph $G=\left(V,\mathfrak{E}_G\right)$, then for any two nodes $i,j\in [n]$, $\sigma_{ij}=\sigma_{ji}=0$ if and only if $i$ and $j$ do not belong to the same maximal class of $G$.
\end{theorem}
\begin{proof}
For all $s,t\in\left\{1,\cdots,n\right\}$, let $\lambda_{st}^{\left[k\right]}=\left(\Lambda^k\right)_{st}$. Assume $i$ and $j$ do not belong to the same maximal class, i.e., by definition, there is no directed path between $i$ and $j$, and additionally, they don't have the same "ancestor". Therefore, by the properties of matrix multiplication, for all $k\in\mathbb{N}$,
\begin{equation*}
    \lambda_{ij}^{\left[k\right]} = \lambda_{ji}^{\left[k\right]} = 0,
\end{equation*}
and for all $l\neq i,j\in \left[n\right]$,
\begin{equation*}
    \lambda_{li}^{\left[k\right]} = 0 \mbox{ or } \lambda_{lj}^{\left[k\right]} = 0.
\end{equation*}

In Section \ref{sec:Jacobian_structure} we argued that $\left(I_{n^2} - \Lambda\otimes\Lambda\right)$ was generically invertible, which implies that $\left(I_{n^2} - \Lambda^T\otimes\Lambda^T\right)$ is generically invertible since $\left(I_{n^2} - \Lambda^T\otimes\Lambda^T\right)$ is just the transpose of $\left(I_{n^2} - \Lambda\otimes\Lambda\right)$. Hence, from (\ref{eq:vec_Sigma}),
\begin{equation}\label{eq:vec_sig_sum}
\begin{aligned}
    &\vec\left(\Sigma\right) = \left(I_{n^2}-\Lambda^T\otimes\Lambda^T\right)^{-1}\vec\left(\omega I_n\right) 
    = \omega\left(\sum_{k=0}^{+\infty}\left(\Lambda^T\otimes\Lambda^T\right)^k\right) \vec\left(I_n\right) \\
    &= \omega\left(\sum_{k=0}^{+\infty}\left(\Lambda^T\right)^k\otimes \left(\Lambda^T\right)^k\right)\vec\left(I_n\right) 
    = \omega\left(\sum_{k=0}^{+\infty}\left(\Lambda^k\right)^T\otimes \left(\Lambda^k\right)^T\right)\vec\left(I_n\right) \\
    &= \omega\sum_{k=0}^{+\infty}\left[\begin{array}{ccccccc}
        \lambda_{11}^{\left[k\right]}\left(\Lambda^k\right)^T & \cdots & \lambda_{i1}^{\left[k\right]}\left(\Lambda^k\right)^T & \cdots & \lambda_{j1}^{\left[k\right]}\left(\Lambda^k\right)^T & \cdots & \lambda_{n1}^{\left[k\right]}\left(\Lambda^k\right)^T\\
        \vdots & \ddots & \vdots & \ddots & \vdots & \ddots & \vdots \\
        \lambda_{1i}^{\left[k\right]}\left(\Lambda^k\right)^T & \cdots & \lambda_{ii}^{\left[k\right]}\left(\Lambda^k\right)^T & \cdots & 0 & \cdots & \lambda_{ni}^{\left[k\right]}\left(\Lambda^k\right)^T \\
        \vdots & \ddots & \vdots & \ddots & \vdots & \ddots & \vdots \\
         \lambda_{1j}^{\left[k\right]}\left(\Lambda^k\right)^T & \cdots & 0 & \cdots & \lambda_{jj}^{\left[k\right]}\left(\Lambda^k\right)^T & \cdots & \lambda_{nj}^{\left[k\right]}\left(\Lambda^k\right)^T \\
         \vdots & \ddots & \vdots & \ddots & \vdots & \ddots & \vdots \\
         \lambda_{1n}^{\left[k\right]}\left(\Lambda^k\right)^T & \cdots & \lambda_{in}^{\left[k\right]}\left(\Lambda^k\right)^T & \cdots & \lambda_{jn}^{\left[k\right]}\left(\Lambda^k\right)^T & \cdots & \lambda_{nn}^{\left[k\right]}\left(\Lambda^k\right)^T\\
    \end{array}\right]\left[\begin{array}{c}
        1 \\ 0 \\ \vdots \\ 0 \\ 1 \\ \vdots \\ 0 \\ \vdots \\ 1
    \end{array}\right],
\end{aligned}
\end{equation}
where the second equality is true because the $\rho(\Lambda) < 1$ implies $\rho(\Lambda^T \otimes \Lambda^T) < 1$. Therefore,
\begin{equation*}
    \sigma_{ij} = \omega\sum_{k=0}^{+\infty}\sum_{l=1}^n\lambda_{li}^{\left[k\right]}\lambda_{lj}^{\left[k\right]} = 0 = \sigma_{ji},
\end{equation*}

On the other hand, by Lemma \ref{lem:M_P_M_S}, if for $i,j\in V$, $\sigma_{ij} = \sigma_{ji} = 0$, then generically $\lambda_{li}^{\left[k\right]}\lambda_{lj}^{\left[k\right]} = 0$ for all $k\in\mathbb{N}$ and $l\in V$. Therefore, there is no directed path between $i$ and $j$, and they do not have the same ancestor. Thus, they do not belong to the same maximal class.
\end{proof}

Theorem \ref{thm:supp_sig_maxc} shows that the maximal classes of the underlying graph are encoded in the support pattern of \(\Sigma\). We now make this correspondence explicit and turn it into a reconstruction procedure.

Before stating the result, we introduce the following notation: For all $i\in [n]$, define the set of nodes that belong to the same maximal class as $i$: 
\begin{equation*}
    \mathcal{C}_i := \{j \mid j\in[n], i,j \mbox{ belong to the same maximal class}\}.
\end{equation*}
And from Theorem \ref{thm:supp_sig_maxc},
\begin{equation*}
    \mathcal{C}_i = \{j \mid j\in[n], \sigma_{ij}\neq 0\}.
\end{equation*}

Note that for all $i\in[n]$, $\mathcal{C}_i$ exists, but may not be a maximal class. For example, in $G_1$ of Figure \ref{fig:Eg_maxc}, $\mathcal{C}_4 = \{1,2,3,4,5,6\}$ is not a maximal class, while $\mathcal{C}_5 = \{4,5,6\}$ is a maximal class.

The following theorem identifies the maximal classes as specific subsets determined by the support of $\Sigma$.

\begin{proposition}\label{prop:set_maxc}
Let $\mathbf{M}_G$ be a stationary VAR(1) model. Then the set of maximal classes $\mathfrak{MC}$ satisfies
\begin{equation*}
    \mathfrak{MC} =\mathfrak{C} \backslash \mathfrak{C}',
\end{equation*}
where
\begin{align*}
    \mathfrak{C} &= \left\{\mathcal{C}_i \mid i\in[n]\right\}, \\
    \mathfrak{C}' &= \left\{\mathcal{C}_i\in\mathfrak{C} \mid i\in[n],\mbox{ and } \exists k,l\in\mathcal{C}_i\ s.t.\ \sigma_{kl}=0\right\}.
\end{align*}
\end{proposition}
\begin{proof}
First, we prove that $\mathfrak{MC}\subseteq\mathfrak{C}$. 
Consider $\mathcal{MC}\in\mathfrak{MC}$, there exists a source node $i\in[n]$, s.t. $\mathcal{MC}=\mathcal{MC}_{[i]}$. By Theorem \ref{thm:supp_sig_maxc}, for all $j\in\mathcal{MC}_{[i]}, \sigma_{ij}\neq 0$, hence $j\in\mathcal{C}_i$. Therefore,
\begin{equation*}
    \mathcal{MC}_{[i]} \subseteq \mathcal{C}_i.
\end{equation*}
Assume $\exists j\in \mathcal{C}_i\backslash \mathcal{MC}_{[i]}$, then one of the following circumstances is true:

\vspace{0.5em}
\noindent 1. There exists a directed path from $i$ to $j$, then $j\in\mathcal{MC}_{[i]}$, which is a contradiction.

\vspace{0.5em}
\noindent 2. There exists a directed path from $j$ to $i$, then $\deg^-(i)\neq 0$, which means that the source of $\mathcal{MC}_{[i]}$ is a SCC containing $i$. Hence either $j$ belongs to the SCC, then $j\in\mathcal{MC}_{[i]}$, which is a contradiction, or $j$ does not belong to the SCC, then the in-degree of the SCC is non-zero, i.e., the SCC containing $i$ is not the source of $\mathcal{MC}_{[i]}$, which is also a contradiction.

\vspace{0.5em}
\noindent 3. There exists another node $c\in[n]$ s.t. there are paths from $c$ to $i$, and from $c$ to $j$ respectively, then $\deg^-(i)\neq 0$, which means that the source of $\mathcal{MC}_{[i]}$ is a SCC containing $i$. Hence either $c$ belongs to the SCC, then $j\in\mathcal{MC}_{[i]}$, which is a contradiction, or $c$ does not belong to the SCC, then the in-degree of the SCC is non-zero, i.e., the SCC containing $i$ is not the source of $\mathcal{MC}_{[i]}$, which is also a contradiction.

Therefore
\begin{equation*}
    \mathcal{MC}_{[i]} = \mathcal{C}_i.
\end{equation*}
This proves that $\mathfrak{MC}\subseteq\mathfrak{C}$.

On the other hand, consider $\mathcal{C}_i\in\mathfrak{C}'$ such that $\mathcal{C}_i\notin \mathfrak{MC}$. Then $\mathcal{C}_i$ is not a maximal class, because it contains two nodes that do not belong to the same maximal class by definition. Therefore 
\begin{equation*}
    \mathfrak{MC} \subseteq \mathfrak{C}\backslash \mathfrak{C}'.
\end{equation*}
Let $\mathcal{C}_i\in\mathfrak{C}\backslash\mathfrak{C}'$ for certain $i\in[n]$ and assume that $\mathcal{C}_i\notin \mathfrak{MC}$. We know that $i$ must belong to one maximal class in $\mathfrak{MC}$. Denote the maximal class containing $i$ as $\mathcal{MC}_i$. Then by Theorem \ref{thm:supp_sig_maxc}, 
\begin{equation*}
    \mathcal{MC}_i \subseteq \mathcal{C}_i.
\end{equation*}
Assume $k\in \mathcal{C}_i \backslash \mathcal{MC}_i$. Note that $|\mathcal{MC}_i|>1$ because the graph is weakly connected. Then $\exists l\neq i\in\mathcal{MC}_i$ s.t. $\sigma_{kl}=0$, because otherwise $k$ belongs to the same maximal class with every node in $\mathcal{MC}_i$, in particular with the source node, meaning that $k\in\mathcal{MC}_i$, which is a contradiction. However, $k,l\in\mathcal{C}_i$, and $\sigma_{kl}=0$ means that $\mathcal{C}_i\in\mathfrak{C}'$, which is also a contradiction. Therefore, 
\begin{equation*}
    \mathcal{MC}_i = \mathcal{C}_i,
\end{equation*}
contradicting the fact that $\mathcal{C}_i\notin \mathfrak{MC}$. 

In conclusion,
\begin{equation*}
    \mathfrak{MC} = \mathfrak{C}\backslash\mathfrak{C}'.
\end{equation*}
\end{proof}

Algorithm \ref{alg:sig_maxc} is then obtained via applying the criterion in Proposition \ref{prop:set_maxc}.

\begin{algorithm}[H]
\caption{Algorithm for the reconstruction of maximal classes via the support of $\Sigma$}\label{alg:sig_maxc}
\hspace*{\algorithmicindent} \textit{Input:} $\mbox{supp}(\Sigma)$ \\
\hspace*{\algorithmicindent} \textit{Output:} the list of maximal classes of $G$: $\mathfrak{MC}$
\begin{algorithmic}[1]
\State $\mathfrak{MC}\gets$ an empty list of lists
\ForAll {nodes $i\in[n]$}
\State $\mathcal{C}_i\gets\{j \mid j\in[n], \sigma_{ij}\neq 0\}$
\State add $\mathcal{C}_i$ to $\mathfrak{MC}$
\EndFor
\ForAll {$k,l\in[n]\ s.t.\ k<l$ and $\sigma_{kl}==0$}
\ForAll{$\mathcal{C}_i\in\mathfrak{MC}$}
\If {$k,l\in\mathcal{C}_i$}
\State $\mathfrak{MC}\gets\mathfrak{MC} \backslash \mathcal{C}_i$
\EndIf
\EndFor
\EndFor
\ForAll{$i\in[|\mathfrak{MC}|]$} \Comment{At this point, there might be repetitive elements in $\mathfrak{MC}$}
\ForAll{$i<j\leq |\mathfrak{MC}|$}
\If{$\mathfrak{MC}[i] == \mathfrak{MC}[j]$}
\State $\mathfrak{MC} \gets \mathfrak{MC} \backslash \mathfrak{MC}[i]$ \Comment{Keep only one of the repetitive elements}
\EndIf
\EndFor
\EndFor
\State \Return $\mathfrak{MC}$
\end{algorithmic}
\end{algorithm}

Theorem \ref{thm:supp_sig_maxc} and Proposition \ref{prop:set_maxc} imply that there is a bijection between the support of the covariance matrix $\Sigma$ and the set of maximal classes of the underlying graph. In this sense, the set of maximal classes captures exactly the graphical information that is recoverable from the support of $\Sigma$. We emphasize that this statement does not extend to treks. In general, the support of \(\Sigma\) does not allow one to reconstruct the set of treks in the graph.

Additionally, by Algorithm \ref{alg:maxc_graphs} in Appendix \ref{app:alg_mc}, we are able to reconstruct a list of possible graphs admitting this set of maximal classes. The following example is an illustration of how we could apply the results above.

\begin{example}
Let $n=3$, assume the support of $\Sigma$ is
\begin{align*}
    \Sigma = \left[\begin{array}{ccc}
        \ast & \ast & 0 \\
        \ast & \ast & \ast \\
        0 & \ast & \ast 
    \end{array}\right],
\end{align*}
where $\ast$ represents a non-zero element. In this case, 
\begin{equation*}
    \mathfrak{C} = \left\{\{1,2\},\{1,2,3\},\{2,3\}\right\},
\end{equation*}
and 
\begin{equation*}
    \mathfrak{C}' = \left\{\{1,2,3\}\right\},
\end{equation*}
because $\sigma_{13} = 0$. Hence, the set of maximal classes is 
\begin{equation*}
    \mathfrak{MC} = \mathfrak{C}\backslash \mathfrak{C}' = \left\{\{1,2\},\{2,3\}\right\}.
\end{equation*}
Note that because the node $2$ is in both of the two maximal classes, then it cannot be a source node in either of them, because otherwise there would have been only one maximal class with source $2$. Hence, $1$ and $3$ are the sources in respective maximal classes. Figure \ref{fig:Eg_sig_mc} shows the only possible graph $G=\left(V,\mathfrak{E}_G\right)$ associated with this $\Sigma$, i.e., $V=\{1,2,3\}$, $\mathfrak{E}_G = \left\{(1,1),(2,2),(3,3),(1,2),(3,2)\right\}$.
\begin{figure}[!tb]
    \centering
    \includegraphics[width=0.2\textwidth]{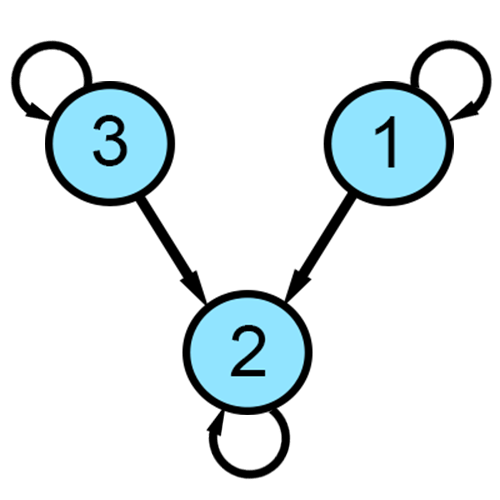}
    \caption{The graph associated with maximal classes $\left\{\{1,2\},\{2,3\}\right\}$.}
    \label{fig:Eg_sig_mc}
\end{figure}
\end{example}

\section{Identifiability results}\label{sec:iden_res}
This section presents identifiability results for stationary VAR(1) models. They are divided into two parts: one of them is based on maximal classes (Section \ref{sec:iden_maxc}), and the other one examines the dimensions of the models (Section \ref{sec:dim_mod}). Moreover, we provide a summary and illustration of the results at the end of this section.

\subsection{Maximal class characterization}\label{sec:iden_maxc}
In this section, we propose two criteria for generic identifiability using maximal classes, one of which only applies to models with the same dimension, while the other one is generialized to models with any dimension. Before presenting the main results, some lemmas and propositions describing the relationship between maximal classes and the parameters of the model as well as the rank of the Jacobian matrix are introduced. These results are not only necessary in proving the main theorems, but also strengthen our understanding of the nature of maximal classes and the Jacobian matrix.

\begin{lemma}\label{lem:supp_Sig_Lam_maxc}
Let $\mathbf{M}_G$ be a stationary VAR(1) model with corresponding directed graph $G=\left(V,\mathfrak{E}_G\right)$, then for any two nodes $i,j\in [n]$, $\left(\Sigma\Lambda\right)_{ij} = \left(\Sigma\Lambda\right)_{ji} = 0$ if and only if $i$ and $j$ do not belong to the same maximal class of $G$.
\end{lemma}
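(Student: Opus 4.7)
The plan is to mirror the argument from the proof of Proposition \ref{prop:supp_sig_maxc}, now applied to $\Sigma\Lambda$ instead of $\Sigma$, and to identify the resulting triple sum with the second clause appearing in the definition of $M^S$ in Lemma \ref{lem:M_P_M_S}.

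Concretely, I would combine $(\Sigma\Lambda)_{ij} = \sum_{s=1}^n \sigma_{is}\lambda_{sj}$ with the series expansion $\sigma_{is} = \omega \sum_{m=0}^{\infty} \sum_{l=1}^n \lambda_{li}^{[m]} \lambda_{ls}^{[m]}$, derived generically (via invertibility of $I_{n^2}-\Lambda^T \otimes \Lambda^T$) inside the proof of Proposition \ref{prop:supp_sig_maxc}. Writing $\lambda_{ij}^{[k]} = (\Lambda^k)_{ij}$, this yields
\[
(\Sigma\Lambda)_{ij} \;=\; \omega \sum_{s=1}^n \sum_{m=0}^{\infty} \sum_{l=1}^n \lambda_{li}^{[m]} \lambda_{ls}^{[m]} \lambda_{sj},
\]
which is exactly the expression appearing in the second clause of the definition of $M^S$. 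Since we work generically with $\Lambda \in M^P \cap M^S$ by Lemma \ref{lem:M_P_M_S}, this sum vanishes if and only if every individual product $\lambda_{li}^{[m]}\lambda_{ls}^{[m]}\lambda_{sj}$ is zero.

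For the backward direction, suppose $i$ and $j$ do not belong to the same maximal class. If some product $\lambda_{li}^{[m]}\lambda_{ls}^{[m]}\lambda_{sj}$ were nonzero, it would exhibit a directed path from $l$ to $i$, a directed path from $l$ to $s$, and an edge $s \to j$; concatenating the last two gives a path from $l$ to $j$, so $l$ is a common ancestor of $i$ and $j$, and Lemma \ref{lem:mc_path} forces $i,j$ into the same maximal class, a contradiction. Hence every term is zero and $(\Sigma\Lambda)_{ij} = 0$; the symmetric argument applied to $(\Sigma\Lambda)_{ji} = \sum_s \sigma_{js}\lambda_{si}$ gives the other equality.

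For the forward direction, suppose $i$ and $j$ belong to a common maximal class. By Lemma \ref{lem:mc_path} there exists $l^\ast \in V$ with directed paths from $l^\ast$ to both $i$ and $j$ (one may take $l^\ast \in \{i,j\}$ when the two are joined by a single directed path). I would then choose $s = j$: the self-loop assumption gives $\lambda_{jj} \neq 0$, while $\Lambda \in M^P$ together with the presence of self-loops (which allows any path to be padded to arbitrary length) ensures $\lambda_{l^\ast i}^{[m]} \neq 0$ and $\lambda_{l^\ast j}^{[m]} \neq 0$ for all sufficiently large $m$. Thus $\lambda_{l^\ast i}^{[m]}\lambda_{l^\ast j}^{[m]}\lambda_{jj} \neq 0$ for some $m$, and by the $M^S$ non-cancellation property the full sum is nonzero, so $(\Sigma\Lambda)_{ij} \neq 0$.

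The backward direction is routine, amounting to path concatenation. The main obstacle is the forward direction: one must rule out algebraic cancellation among infinitely many summands of variable magnitudes, which is exactly what the $M^S$ genericity assumption is designed to deliver, while the $M^P$ assumption and the standing presence of self-loops guarantee that one can exhibit a specific nonzero summand (with the clean choice $s = j$ bypassing any case analysis over whether $j$ is itself the source of the maximal class).
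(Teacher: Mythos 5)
Your proposal is correct and follows essentially the same route as the paper's proof: the same triple-sum expansion $(\Sigma\Lambda)_{ij}=\omega\sum_{s}\sum_{k}\sum_{l}\lambda_{li}^{[k]}\lambda_{ls}^{[k]}\lambda_{sj}$, the same appeal to Lemma \ref{lem:M_P_M_S} ($M^S$ for non-cancellation, $M^P$ for eventually nonzero powers), and the same choice $s=j$ exploiting the self-loop $\lambda_{jj}\neq 0$. The only difference is cosmetic: you argue the second implication in contrapositive form (same maximal class $\Rightarrow (\Sigma\Lambda)_{ij}\neq 0$) whereas the paper argues it directly, which is logically equivalent.
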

\begin{proof}
By the proof of Theorem \ref{thm:supp_sig_maxc}, we know that for all $a,b\in V$, 
\begin{equation*}
    \sigma_{ab} = \omega\sum_{k=0}^{+\infty}\sum_{l=1}^n\lambda_{la}^{\left[k\right]}\lambda_{lb}^{\left[k\right]}.
\end{equation*}
Therefore, if $i$ and $j$ do not belong to the same maximal class, 
\begin{equation*}
    \left(\Sigma\Lambda\right)_{ij} = \sum_{s=1}^n\sigma_{is}\lambda_{sj} = \omega\sum_{s=1}^n\sum_{k=0}^{+\infty}\sum_{l=1}^n\lambda_{li}^{\left[k\right]}\lambda_{ls}^{\left[k\right]}\lambda_{sj} = 0.
\end{equation*}
Because if $\left(\Sigma\Lambda\right)_{ij} \neq 0$, then $\exists k\in\mathbb{N}, s,l\in V$ s.t. $\lambda_{li}^{\left[k\right]}\lambda_{ls}^{\left[k\right]}\lambda_{sj} \neq 0$, i.e., there exist the following two paths in $G$:
\begin{equation*}
    l\rightsquigarrow i, \mbox{ and }l \rightsquigarrow s \rightarrow j,
\end{equation*}
where $\rightsquigarrow$ represents a directed path, not necessarily a single edge. In this case, $i$ and $j$ have the same ancestor $l$, thus belong to the same maximal class, which contradicts the assumption. Similarly,
\begin{equation*}
    \left(\Sigma\Lambda\right)_{ji} = 0.
\end{equation*}

On the other hand, if $\left(\Sigma\Lambda\right)_{ij} = \omega\sum_{s=1}^n\sum_{k=0}^{+\infty}\sum_{l=1}^n\lambda_{li}^{\left[k\right]}\lambda_{ls}^{\left[k\right]}\lambda_{sj} = 0$, then by Lemma \ref{lem:M_P_M_S}, generically,
\begin{equation*}
    \forall k\in\mathbb{N}, s,l\in\left[n\right], \lambda_{li}^{\left[k\right]}\lambda_{ls}^{\left[k\right]}\lambda_{sj} = 0,
\end{equation*}
Let $s=j$, since $\lambda_{jj}\neq 0$,
\begin{equation*}
     \forall k\in\mathbb{N}, l\in\left[n\right], \lambda_{li}^{\left[k\right]}\lambda_{ls}^{\left[k\right]} = 0.
\end{equation*}
By Lemma \ref{lem:M_P_M_S}, it implies that $i$ and $j$ do not have the same ancestor, and they are not connected by directed paths, i.e., $i$ and $j$ do not belong to the same maximal class.
\end{proof}

The following proposition highlights the link between maximal classes and the null columns of the Jacobian matrix, which is the foundation of characterizing models using maximal classes.

\begin{proposition}\label{prop:J_G_0}
Let $\mathbf{M}_G$ be a stationary VAR(1) model with a corresponding directed graph $G=\left(V,\mathfrak{E}_G\right)$, then the Jacobian matrix satisfies: $\forall i\leq j\in [n]$, $\mathbf{J}_G^{\left[\sigma_{ij}\right]} = \mathbf{0}$ if and only if $i$ and $j$ do not belong to the same maximal class. Here $\mathbf{J}_G^{\left[\sigma_{ij}\right]}$ represents the column of $\mathbf{J}_G$ that corresponds to $\sigma_{ij}$.
\end{proposition}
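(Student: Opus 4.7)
The plan is to reduce the statement about the column $\mathbf{J}_G^{[\sigma_{ij}]}$ to a statement about the dependence of $\sigma_{ij}$ on the parameters $(\Lambda, \omega)$, and then invoke Proposition~\ref{prop:supp_sig_maxc} together with one structural observation about the parametrization. Recall that the column $\mathbf{J}_G^{[\sigma_{ij}]}$ is the vector of partial derivatives $(\partial \sigma_{ij}/\partial \theta_k)_k$, where $\theta_k$ ranges over the $E_G+1$ free parameters $(\lambda_{ij})_{(i,j)\in\mathfrak{E}_G}$ and $\omega$.

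The structural observation is that equation~(\ref{eq:vec_Sigma}) gives $\vec(\Sigma) = \omega \cdot (I_{n^2} - \Lambda^T \otimes \Lambda^T)^{-1} \vec(I_n)$, so for every $i,j \in [n]$ one can write $\sigma_{ij}(\Lambda,\omega) = \omega \cdot g_{ij}(\Lambda)$, where $g_{ij}$ is a rational function of the entries of $\Lambda$. In particular, $\partial \sigma_{ij}/\partial \omega = g_{ij}(\Lambda) = \sigma_{ij}/\omega$. This isolates the role of $\omega$ in the parametrization and is the decisive identity for the reverse direction.

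For the $(\Leftarrow)$ direction, suppose $i$ and $j$ do not belong to the same maximal class. The proof of Proposition~\ref{prop:supp_sig_maxc} actually gives something stronger than the generic vanishing of $\sigma_{ij}$: each summand $\lambda_{li}^{[k]}\lambda_{lj}^{[k]}$ appearing in the series expansion of $\sigma_{ij}$ vanishes identically on $M_G$, so $\sigma_{ij} \equiv 0$ as a function on the whole parameter space $M_G \times \mathbb{R}^+$. Consequently every partial derivative $\partial \sigma_{ij}/\partial \theta_k$ is identically zero, and hence $\mathbf{J}_G^{[\sigma_{ij}]} = \mathbf{0}$.

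For the $(\Rightarrow)$ direction, I would argue by contrapositive: assume $i$ and $j$ belong to the same maximal class. Proposition~\ref{prop:supp_sig_maxc} then yields $\sigma_{ij} \neq 0$ at a generic point, and the factorization $\sigma_{ij} = \omega\, g_{ij}(\Lambda)$ gives $\partial \sigma_{ij}/\partial \omega = \sigma_{ij}/\omega \neq 0$ generically. Since this is precisely the entry of $\mathbf{J}_G^{[\sigma_{ij}]}$ in the row indexed by $\omega$, the column is non-zero at a generic point. There is no serious obstacle in this plan; the only subtlety worth flagging is that the row of $\mathbf{J}_G$ corresponding to $\omega$, rather than any row corresponding to a $\lambda$-parameter, is what most cleanly detects non-vanishing of $\sigma_{ij}$, thanks to the linear appearance of $\omega$ in $\vec(\Sigma)$.
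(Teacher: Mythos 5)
Your proof is correct, and it diverges from the paper's in an interesting way on the ``zero column'' direction. For the direction where $i$ and $j$ lie in the same maximal class, you and the paper do essentially the same thing: the paper computes $\mathbf{J}_G^{\left[\omega,\sigma_{ij}\right]}=\sum_{k}\sum_{a}\lambda_{ai}^{[k]}\lambda_{aj}^{[k]}$, which is exactly your $\sigma_{ij}/\omega$, and shows it is generically nonzero via Lemma \ref{lem:M_P_M_S}; your appeal to Proposition \ref{prop:supp_sig_maxc} plus the homogeneity $\sigma_{ij}=\omega\,g_{ij}(\Lambda)$ is the same content packaged more cleanly. Where you genuinely differ is the converse: the paper goes through the explicit formula $\overline{\mathbf{J}}_G=\psi_G(B)\left(I_{n^2}-\Lambda\otimes\Lambda\right)^{-1}$ of Theorem \ref{thm:J_G_ext_cal}, uses Lemma \ref{lem:supp_Sig_Lam_maxc} to kill the $(\Sigma\Lambda)$ factors, and runs a case analysis on directed paths to show each entry $\mathbf{J}_G^{\left[\lambda_{st},\sigma_{ij}\right]}$ vanishes, whereas you simply observe that when $i$ and $j$ are in different maximal classes the vanishing of $\sigma_{ij}$ is structural rather than generic --- each summand $\lambda_{li}^{[k]}\lambda_{lj}^{[k]}$ is identically zero on $M_G$ by the path interpretation of powers of $\Lambda$ --- so $\sigma_{ij}\equiv 0$ on the open set $M_G\times\mathbb{R}^+$ and every partial derivative with respect to the free parameters (which are exactly the rows of $\mathbf{J}_G$) vanishes. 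This is valid and noticeably shorter: it sidesteps the Jacobian machinery of Section \ref{sec:Jacobian_structure} and Lemma \ref{lem:supp_Sig_Lam_maxc} entirely, at the price of not producing the entrywise information about $\mathbf{J}_G$ (via $\Sigma\Lambda$) that the paper's computation exposes and reuses elsewhere. The one point to state explicitly if you write this up is that $M_G$ is open in $\mathbb{R}^{E_G}$ (spectral radius is continuous), so ``identically zero on the parameter space'' legitimately gives vanishing of the derivatives at every point, not just generic ones.
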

\begin{proof}
Firstly, we prove that if $i$ and $j$ do not belong to the same maximal class, then $\mathbf{J}_G^{\left[\sigma_{ij}\right]} = \mathbf{0}$. By Theorem \ref{thm:supp_sig_maxc} and Lemma \ref{lem:supp_Sig_Lam_maxc}, we know that
\begin{equation*}
    \sigma_{ij} = \sigma_{ji} = 0, \mbox{ and } \left(\Sigma\Lambda\right)_{ij} = \left(\Sigma\Lambda\right)_{ji} = 0.
\end{equation*}
Recall from Theorem \ref{thm:J_G_ext_cal} that
\begin{equation*}
    \overline{\mathbf{J}}_G = \psi_G\left(B\right)\left(I_{n^2} - \Lambda\otimes\Lambda\right)^{-1},
\end{equation*}
where
\begin{align*}
    B &= \left[\begin{array}{c}
        \left(\Sigma\Lambda\otimes I_n\right)\left(I_{n^2} + \mathbf{P}\right) \\
        \vec\left(I_{n^2}\right)^T
    \end{array}\right].
\end{align*}
Therefore,
\begin{equation*}
    \mathbf{J}_G^{\left[\sigma_{ij}\right]} = \overline{\mathbf{J}}_G^{\left[\sigma_{ij}\right]} = \psi_G\left(B\right)\left(\left(I_{n^2} - \Lambda\otimes\Lambda\right)^{-1}\right)^{\left[\sigma_{ij}\right]},
\end{equation*}
where
\begin{align*}
    \left(\left(I_{n^2} - \Lambda\otimes\Lambda\right)^{-1}\right)^{\left[\sigma_{ij}\right]} = \sum_{k=0}^{+\infty} &\left[\begin{array}{cccccc}
        \lambda_{1i}^{\left[k\right]}\lambda_{1j}^{\left[k\right]} & \cdots & \lambda_{1i}^{\left[k\right]}\lambda_{nj}^{\left[k\right]} & \lambda_{2i}^{\left[k\right]}\lambda_{1j}^{\left[k\right]} & \cdots & \lambda_{2i}^{\left[k\right]}\lambda_{nj}^{\left[k\right]}
    \end{array}\right. \\
    &\left. \begin{array}{cccc}
        \cdots & \lambda_{ni}^{\left[k\right]}\lambda_{1j}^{\left[k\right]} & \cdots & \lambda_{ni}^{\left[k\right]}\lambda_{nj}^{\left[k\right]}
    \end{array}\right]^T.
\end{align*}
Recall that $\psi_G$ removes the rows in $\overline{\mathbf{J}}$ that correspond to $\lambda_{ab}$, where $\left(a,b\right)\notin \mathfrak{E}_G$. For all $s,t \in \left[n\right]$ s.t. $\left(s,t\right)\in \mathfrak{E}_G$, denote the row in $\psi_G\left(B\right)$ that corresponds to $\lambda_{st}$ as $\psi_G\left(B\right)^{\left(\lambda_{st}\right)}$, and the element in $\mathbf{J}_G^{\left[\sigma_{ij}\right]}$ that corresponds to $\lambda_{st}$ as $\mathbf{J}_G^{\left[\lambda_{st},\sigma_{ij}\right]}$. Then
\begin{align*}
    \psi_G\left(B\right)^{\left(\lambda_{st}\right)} = &\left[
    \begin{array}{ccc}
        \begin{array}{ccccc}
            0 & \cdots & \left(\Sigma\Lambda\right)_{s1} & \cdots & 0
        \end{array} & \cdots & \begin{array}{ccccc}
            \left(\Sigma\Lambda\right)_{s1} & \cdots & \left(\Sigma\Lambda\right)_{st} & \cdots &  \left(\Sigma\Lambda\right)_{sn}
        \end{array}
    \end{array}\right. \\
    &\left.\begin{array}{cccccc}
           \cdots & 0 & \cdots & \left(\Sigma\Lambda\right)_{sn} & \cdots & 0
        \end{array}
    \right].
\end{align*}
Therefore,
\begin{align*}
    \mathbf{J}_G^{\left[\lambda_{st},\sigma_{ij}\right]} &= \psi_G\left(B\right)^{\left(\lambda_{st}\right)}\left(\left(I_{n^2} - \Lambda\otimes\Lambda\right)^{-1}\right)^{\left[\sigma_{ij}\right]} \\
    &= \sum_{k=0}^{+\infty}\left(\sum_{a=1}^n\left(\Sigma\Lambda\right)_{sa}\lambda_{ai}^{\left[k\right]}\lambda_{tj}^{\left[k\right]} + \sum_{b=1}^n\left(\Sigma\Lambda\right)_{sb}\lambda_{ti}^{\left[k\right]}\lambda_{bj}^{\left[k\right]}\right).
\end{align*}
Consider the first term of the sum: for all $a\in\left[n\right]$ and $k\in\mathbb{N}$, if $\left(\Sigma\Lambda\right)_{sa}\lambda_{ai}^{\left[k\right]}\lambda_{tj}^{\left[k\right]} \neq 0$, then from the fact that $\left(\Sigma\Lambda\right)_{sa}\neq 0$, we know by Lemma \ref{lem:supp_Sig_Lam_maxc} that $s$ and $a$ belong to the same maximal class, i.e., there is a directed path between $s$ and $a$ (Case \ref{case:1_prf_J_G_maxc}), or there exists a node $l\neq s,a\in\left[n\right]$ s.t. there are directed paths from $l$ to $s$ and $a$ respectively (Case \ref{case:2_prf_J_G_maxc}). Additionally, there exist direct paths among the following nodes:
\begin{equation*}
    s\rightarrow t\rightsquigarrow j \mbox{, and } a\rightsquigarrow i.
\end{equation*}
\begin{itemize}[leftmargin=*]
    \item[] \begin{case}\label{case:1_prf_J_G_maxc} without loss of generality, assume the path is from $s$ to $a$, then there exist directed paths: $s\rightsquigarrow t\rightsquigarrow j$, and $s\rightsquigarrow a\rightsquigarrow i$. It contradicts the assumption. \end{case}
    \item[] \begin{case}\label{case:2_prf_J_G_maxc} assume there exist directed paths: $l\rightsquigarrow s$, and $l\rightsquigarrow a$, then there exist directed paths: $l\rightsquigarrow a \rightsquigarrow i$, and $l\rightsquigarrow s\rightarrow t\rightsquigarrow j$. It also contradicts the assumption.\end{case}
\end{itemize}
Therefore, for all $a\in\left[n\right]$ and $k\in\mathbb{N}$, $\left(\Sigma\Lambda\right)_{sa}\lambda_{ai}^{\left[k\right]}\lambda_{tj}^{\left[k\right]} = 0$. The second term of the sum is also $0$ by similar arguments. Hence, for all $s,t\in\left[n\right]$ s.t. $\left(s,t\right)\in \mathfrak{E}_G$,
\begin{equation*}
    \mathbf{J}_G^{\left[\lambda_{st},\sigma_{ij}\right]} = 0.
\end{equation*}
In addition,
\begin{equation*}
    \mathbf{J}_G^{\left[\omega,\sigma_{ij}\right]} = \vec\left(I_{n^2}\right)\left(\left(I_{n^2} - \Lambda\otimes\Lambda\right)^{-1}\right)^{\left[\sigma_{ij}\right]} 
    = \sum_{k=0}^{+\infty}\sum_{a=1}^n\lambda_{ai}^{\left[k\right]}\lambda_{aj}^{\left[k\right]} 
    = 0,
\end{equation*}
because if there exists $k\in\mathbb{N}$ and $a\in\left[n\right]$ s.t. $\lambda_{ai}^{\left[k\right]}\lambda_{aj}^{\left[k\right]}\neq 0$, then there exist directed paths: $a\rightsquigarrow i$ and $a\rightsquigarrow j$, which contradicts the assumption.

In conclusion,
\begin{equation*}
    \mathbf{J}_G^{\left[\sigma_{ij}\right]} = \mathbf{0}.
\end{equation*}

Next, we prove that if $i$ and $j$ belong to the same maximal class, i.e., there is a directed path between $i$ and $j$, or there exists $k\neq i,j\in\left[n\right]$ s.t. there are directed paths from $k$ to $i$ and $j$ respectively, then $\mathbf{J}_G^{\left[\sigma_{ij}\right]} \neq \mathbf{0}$.

In particular,
\begin{equation*}
    \mathbf{J}_G^{\left[\omega,\sigma_{ij}\right]} = \vec\left(I_{n^2}\right)\left(\left(I_{n^2} - \Lambda\otimes\Lambda\right)^{-1}\right)^{\left[\sigma_{ij}\right]} 
    = \sum_{k=0}^{+\infty}\sum_{a=1}^n\lambda_{ai}^{\left[k\right]}\lambda_{aj}^{\left[k\right]} 
    \neq 0,
\end{equation*}
because if $\mathbf{J}_G^{\left[\omega,\sigma_{ij}\right]} = 0$, then by Lemma \ref{lem:M_P_M_S}, for all $k\in\left[n\right]$ and $a\in\left[n\right]$, $\lambda_{ai}^{\left[k\right]}\lambda_{aj}^{\left[k\right]} = 0$. It means that there is no directed path from $a$ to $i$ and $j$ respectively (here $a$ may equal to $i$ or $j$), which contradicts the assumption. Therefore, $\mathbf{J}_G^{\left[\omega,\sigma_{ij}\right]} \neq 0$, and moreover,
\begin{equation*}
    \mathbf{J}_G^{\left[\sigma_{ij}\right]} \neq \mathbf{0}.
\end{equation*}
\end{proof}

Proposition \ref{prop:J_G_0} shows that two nodes that do not belong to the same maximal class will result in a null column in the Jacobian matrix. This is already an important piece of information on the Jacobian matroid. With one more proposition on the rank of the Jacobian matrix, we are able to prove the first two main results of this paper.

\begin{lemma}\label{lem:rank_J_G_psi}
Let $\mathbf{M}_G$ be a stationary VAR(1) model with an associated directed graph $G$. Then
\begin{equation*}
    rank\left(\mathbf{J}_G\right) = rank\left(\psi_G\left(B\right)\right).
\end{equation*}
\end{lemma}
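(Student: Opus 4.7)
The plan is to chain two rank-preserving equalities, bridging $\mathbf{J}_G$ to $\psi_G(B)$ through the extended Jacobian $\overline{\mathbf{J}}_G$. The key observation is that $\overline{\mathbf{J}}_G$ is obtained from $\mathbf{J}_G$ merely by duplicating columns, and that $\overline{\mathbf{J}}_G$ is related to $\psi_G(B)$ by right-multiplication by a generically invertible matrix.

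First, I would argue that $\text{rank}(\mathbf{J}_G) = \text{rank}(\overline{\mathbf{J}}_G)$. By construction, $\overline{\mathbf{J}}_G$ is $\mathbf{J}_G$ augmented with extra columns indexed by $\sigma_{ab}$ with $a>b$. Since $\Sigma$ is symmetric, we have $\sigma_{ab} = \sigma_{ba}$ as functions of the parameters $(\Lambda,\omega)$, and hence for every parameter $\theta_i$ (either an entry $\lambda_{st}$ with $(s,t)\in\mathfrak{E}_G$, or $\omega$),
\begin{equation*}
    \frac{\partial \sigma_{ab}}{\partial \theta_i} = \frac{\partial \sigma_{ba}}{\partial \theta_i}.
\end{equation*}
Therefore every "extra" column of $\overline{\mathbf{J}}_G$ is identical to an existing column of $\mathbf{J}_G$, so the column space of $\overline{\mathbf{J}}_G$ coincides with that of $\mathbf{J}_G$, and the two matrices have the same rank.

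Second, I would apply Theorem \ref{thm:J_G_ext_cal}, which gives
\begin{equation*}
    \overline{\mathbf{J}}_G = \psi_G(B)\left(I_{n^2}-\Lambda\otimes\Lambda\right)^{-1}.
\end{equation*}
Proposition \ref{prop:I_lam_mea_0} ensures that $\left(I_{n^2}-\Lambda\otimes\Lambda\right)$ is invertible outside a Lebesgue null set of parameters. Since multiplication by an invertible square matrix on the right preserves the rank, we obtain generically
\begin{equation*}
    \text{rank}\left(\overline{\mathbf{J}}_G\right) = \text{rank}\left(\psi_G(B)\right).
\end{equation*}
Combining the two equalities yields the claim. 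There is no substantive obstacle here: both steps are essentially linear-algebraic consequences of results already established, the only care required being to invoke the generic hypothesis of Proposition \ref{prop:I_lam_mea_0} so that the inverse matrix exists.
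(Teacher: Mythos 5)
Your proposal is correct and follows essentially the same route as the paper: first noting that $\overline{\mathbf{J}}_G$ only adds duplicate columns to $\mathbf{J}_G$ (so the ranks agree), then using $\overline{\mathbf{J}}_G = \psi_G(B)\left(I_{n^2}-\Lambda\otimes\Lambda\right)^{-1}$ together with the generic invertibility from Proposition \ref{prop:I_lam_mea_0} to conclude. Your explicit justification via the symmetry $\sigma_{ab}=\sigma_{ba}$ is a welcome, if minor, elaboration of the paper's one-line argument.
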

\begin{proof}
The proof is given in Appendix \ref{app:prf_lem_rank_J_G_psi}.
\end{proof}

Lemma \ref{lem:rank_J_G_psi} ensures that when we need to study the rank of $\mathbf{J}_G$, we can look at the simpler matrix $\psi_G(B)$ instead. The following proposition provides an upper and lower bound for the rank of the Jacobian matrix based on this property.

\begin{proposition}\label{prop:rank_n_n_r_n_c}
Let $\mathbf{M}_G$ be a stationary VAR(1) model. Then 
\begin{equation*}
    n \leq rank\left(\mathbf{J}_G\right) \leq \min\left\{n_r,n_c'\right\},
\end{equation*}
where
\begin{align*}
    n_r &= E_G + 1; \\
    n_c' &= |\left\{\{a,b\} \mid a, b\in\left[n\right], a,b \mbox{ belong to the same maximal class}\right\}|.
\end{align*}
\end{proposition}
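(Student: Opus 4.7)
I would split the claim into an upper bound and a lower bound, the two being independent arguments. The upper bound follows quickly from the material already developed (essentially Proposition \ref{prop:J_G_0}), while the lower bound requires producing a specific point in the parameter space where an $n\times n$ submatrix of $\mathbf{J}_G$ is visibly nonsingular.

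For the upper bound, observe that $\mathbf{J}_G$ has exactly $E_G+1=n_r$ rows (one per edge of $G$ plus one for $\omega$), so trivially $\mathrm{rank}(\mathbf{J}_G)\le n_r$. For the column count, Proposition \ref{prop:J_G_0} states that the column $\mathbf{J}_G^{[\sigma_{ij}]}$ vanishes identically unless $i$ and $j$ lie in a common maximal class of $G$. Hence the number of potentially nonzero columns is exactly the number of unordered pairs $\{a,b\}$ with $a,b$ in the same maximal class, i.e.\ $n_c'$. Since the rank of a matrix is bounded by its number of nonzero columns, we conclude $\mathrm{rank}(\mathbf{J}_G)\le n_c'$.

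For the lower bound, the plan is to evaluate $\mathbf{J}_G$ at a conveniently chosen point of the parameter space and exhibit an $n\times n$ nonsingular minor there. Since self-loops $(i,i)$ are assumed to lie in $\mathfrak{E}_G$, the rows of $\mathbf{J}_G$ indexed by $\lambda_{11},\dots,\lambda_{nn}$ are always present, and the columns indexed by $\sigma_{11},\dots,\sigma_{nn}$ are non-zero by Proposition \ref{prop:J_G_0}. Pick a diagonal matrix $\Lambda_0=\mathrm{diag}(\lambda_{11},\dots,\lambda_{nn})$ with generic nonzero entries in $(-1,1)$, which lies in $M_G$ because the off-diagonal zeros are compatible with any graph structure and $\rho(\Lambda_0)<1$. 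At such a point, $\Sigma$ is itself diagonal with $\sigma_{jj}=\omega/(1-\lambda_{jj}^2)$, and using the series expansion $\sigma_{jj}=\omega\sum_{k\ge 0}\sum_l(\lambda_{lj}^{[k]})^2$ from the proof of Proposition \ref{prop:supp_sig_maxc}, one computes
\begin{equation*}
    \left.\frac{\partial \sigma_{jj}}{\partial \lambda_{ii}}\right|_{\Lambda_0}=\delta_{ij}\cdot\frac{2\omega\lambda_{ii}}{(1-\lambda_{ii}^2)^2}.
\end{equation*}
Thus the $n\times n$ submatrix of $\mathbf{J}_G$ with rows $\{\lambda_{ii}\}_{i\in[n]}$ and columns $\{\sigma_{ii}\}_{i\in[n]}$, evaluated at $\Lambda_0$, is diagonal with nonzero entries, hence has rank $n$. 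Since the rank of a matrix depending polynomially on parameters is lower semi-continuous — the generic rank equals the maximal rank attained, and the locus where rank is strictly smaller is a proper Zariski closed set — we conclude that at a generic $\Lambda$ the rank of $\mathbf{J}_G$ is at least $n$.

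The main obstacle is verifying that the off-diagonal partial derivatives $\partial \sigma_{jj}/\partial \lambda_{ii}$ (for $i\neq j$) really vanish at $\Lambda_0$. This requires unpacking $\lambda_{jj}^{[k]}=(\Lambda^k)_{jj}$ as a sum over length-$k$ closed walks from $j$ to $j$, noting that at $\Lambda_0$ only the all-$j$ loop contributes, and then observing that differentiating such a product of entries $\lambda_{jj}$ with respect to $\lambda_{ii}$ for $i\neq j$ yields $0$. Once this computation is in hand, the rest of the argument is straightforward assembly of the three inequalities.
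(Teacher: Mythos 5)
Your proof is correct. The upper bound is exactly the paper's argument: $n_r$ rows, and Proposition \ref{prop:J_G_0} kills every column $\mathbf{J}_G^{[\sigma_{ij}]}$ with $i,j$ not in a common maximal class, leaving at most $n_c'$ nonzero columns. The lower bound, however, is where you diverge. The paper does not specialize the parameter: it invokes Lemma \ref{lem:rank_J_G_psi} to replace $\mathbf{J}_G$ by $\psi_G(B)$ (same rank), and then simply reads off from the explicit formula for $B$ that the $n\times n$ submatrix with rows $\lambda_{ii}$ and columns $\sigma_{jj}$ is diagonal with entries $2(\Sigma\Lambda)_{ii}\neq 0$ at a generic point, so the rank is at least $n$ directly, with no limiting or semicontinuity step. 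You instead evaluate $\mathbf{J}_G$ itself at a diagonal $\Lambda_0$, compute $\partial\sigma_{jj}/\partial\lambda_{ii}\vert_{\Lambda_0}=\delta_{ij}\,2\omega\lambda_{ii}/(1-\lambda_{ii}^2)^2$ via the walk expansion, and then argue that since the entries of $\mathbf{J}_G$ are rational in $(\Lambda,\omega)$, a single point of rank $\ge n$ forces the generic rank to be $\ge n$. Both routes are sound; yours is more elementary in that it avoids the $\psi_G(B)$ machinery and the explicit form of $B$, at the cost of the extra specialization-plus-genericity step (which does require the rationality of the parametrization, established in Appendix \ref{app:para_rat}, and a justification for differentiating the series or, more simply, using the closed form $\vec(\Sigma)=(I_{n^2}-\Lambda^T\otimes\Lambda^T)^{-1}\vec(\omega I_n)$), whereas the paper's route gives the bound at every generic parameter in one structural observation. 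The walk-counting verification you flag as the main obstacle does go through: at a diagonal $\Lambda_0$ the only walks with nonvanishing weight after removing one factor $\lambda_{ii}$ are repetitions of the self-loop at $i$, forcing $l=i=j$.
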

\begin{proof}
Consider a $n\times n$ submatrix of $\psi_G(B)$, with rows and columns corresponding to $\lambda_{ii}$ and $\sigma_{jj}$ respectively, for all $i,j\in[n]$. Then it is a diagonal matrix with non-zero entries on the diagonal. Therefore, the rank of this submatrix is $n$, and the rank of $\psi_G(B)$ is at least $n$ by the Guttman rank additivity formula (\cite{guttman1944general}). The result follows from the fact that $\mbox{rank}\left(\mathbf{J}_G\right) = \mbox{rank}\left(\psi_G(B)\right)$.

By definition, $\mathbf{J}_G$ is of size $\left(E_G+1\right)\times\frac{n\left(n+1\right)}{2}$. So
\begin{equation*}
    rank\left(\mathbf{J}_G\right) \leq \min\left\{n_r,\frac{n\left(n+1\right)}{2}\right\}.
\end{equation*}
In fact, some of the columns of $\mathbf{J}_G$ are zero vectors. We only count the non-zero columns, which is $n_c'$ by Proposition \ref{prop:J_G_0}. Therefore,
\begin{equation*}
    rank\left(\mathbf{J}_G\right) \leq \min\left\{n_r,n_c'\right\}.
\end{equation*}
\end{proof}

Now, we present the first main result that characterizes models with the same dimension using maximal classes, based on the lemmas and propositions introduced above.

\begin{theorem}\label{thm:iden_maxc_dim}
Let $\left\{\mathbf{M}_k\right\}_{k=1}^K$ be a finite set of stationary VAR(1) models corresponding to graphs $\left\{G_{k}\right\}_{k=1}^K$. If the models have the same dimension and for all distinct pairs $\left(k_1,k_2\right)$ of values from $1$ to $K$, $G_{k_1}$ and $G_{k_2}$ have different maximal classes, then these models (or the discrete parameter $k$) are generically identifiable.
\end{theorem}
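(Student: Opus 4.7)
The strategy is to invoke Proposition \ref{prop:gene_iden_J_same_dim}: since all models share the same dimension, it suffices to exhibit, for every distinct pair $k_1, k_2 \in [K]$, a subset $S$ of coordinates that belongs to exactly one of $\mathcal{J}(\mathbf{M}_{k_1})$ and $\mathcal{J}(\mathbf{M}_{k_2})$. The most economical candidates are singletons $S = \{\sigma_{ij}\}$, because Proposition \ref{prop:J_G_0} precisely characterizes when such a singleton is independent: $\{\sigma_{ij}\} \in \mathcal{J}(\mathbf{M}_G)$ if and only if the column $\mathbf{J}_G^{[\sigma_{ij}]}$ is non-zero, i.e.\ if and only if $i$ and $j$ belong to a common maximal class of $G$.

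Consequently, the theorem reduces to the purely combinatorial statement: if $G_{k_1}$ and $G_{k_2}$ have distinct sets of maximal classes, then there exists a pair $\{i,j\}$ of vertices that lies in a common maximal class of one graph but not of the other. I would prove this by showing that the set of maximal classes is recoverable from the ``co-membership'' relation $\sim_G$ on $V$ defined by $i \sim_G j \iff \exists\, \mathcal{MC} \in \mathfrak{MC}_G$ with $\{i,j\} \subseteq \mathcal{MC}$. Concretely, using Lemma \ref{lem:mc_path} together with the uniqueness of sources (Proposition \ref{prop:maxc_uniq}), one checks that each $\mathcal{MC} \in \mathfrak{MC}_G$ is a maximal clique of the undirected graph on $V$ induced by $\sim_G$, and conversely that every maximal clique of this graph arises as a maximal class. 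Since the collection of maximal cliques of a graph is manifestly determined by its edge set, $\mathfrak{MC}_G$ is determined by $\sim_G$, so the assumption $\mathfrak{MC}_{k_1} \neq \mathfrak{MC}_{k_2}$ forces $\sim_{k_1} \neq \sim_{k_2}$, which supplies the required distinguishing pair $\{i,j\}$.

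With such a pair in hand, say with $i$ and $j$ in a common maximal class of $G_{k_1}$ but not of $G_{k_2}$, Proposition \ref{prop:J_G_0} immediately yields
\begin{equation*}
    \{\sigma_{ij}\} \in \mathcal{J}(\mathbf{M}_{k_1}) \setminus \mathcal{J}(\mathbf{M}_{k_2}),
\end{equation*}
and then Proposition \ref{prop:gene_iden_J_same_dim} concludes generic identifiability of the family. I expect the main obstacle to be the combinatorial reduction — establishing the bijection between maximal classes and maximal cliques of the co-membership graph — since it requires a somewhat delicate case analysis in the direction ``every maximal clique is a maximal class,'' where one must argue that any vertex $v$ co-classed with every element of a clique $C$ must in fact be reachable from the common source of the classes containing $C$; the remaining steps are direct consequences of results already established in Sections \ref{sec:Jacobian_structure}--\ref{sec:maximal_classes}.
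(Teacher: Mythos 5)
Your overall strategy coincides with the paper's: for each distinct pair of models, produce two nodes that are co-members of a maximal class in one graph but not in the other, convert this via Proposition \ref{prop:J_G_0} into a coordinate set lying in one Jacobian matroid but not the other, and conclude with Proposition \ref{prop:gene_iden_J_same_dim}. Your use of singletons $\{\sigma_{ij}\}$ is in fact slightly more economical than the paper's argument, which exhibits a two-element set by pairing the distinguished column with a second, linearly independent column (using $\mbox{rank}(\mathbf{J}_{G_1})\geq n\geq 2$ from Proposition \ref{prop:rank_n_n_r_n_c}); since a singleton consisting of a non-zero column is independent while a singleton consisting of the zero column is not, that step of yours is sound.

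The genuine gap is in your combinatorial reduction. The fact you need --- that $\mathfrak{MC}_{G_1}\neq\mathfrak{MC}_{G_2}$ forces the co-membership relations $\sim_{G_1}$ and $\sim_{G_2}$ to differ --- is true, but your proposed proof of it, via the claim that the maximal classes of $G$ are exactly the maximal cliques of the co-membership graph, fails in the direction ``every maximal clique is a maximal class'' (precisely the direction you flagged as delicate). Counterexample: take $V=\{a,b,c,x,y,z\}$ with self-loops and edges $a\to x$, $a\to y$, $b\to y$, $b\to z$, $c\to z$, $c\to x$. The maximal classes are $\{a,x,y\}$, $\{b,y,z\}$, $\{c,z,x\}$, so $x\sim y$, $y\sim z$, $z\sim x$, and $\{x,y,z\}$ is a maximal clique of the co-membership graph (no source can be added, e.g.\ $a\not\sim z$), yet it is not a maximal class. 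The correct recovery statement, and the one the paper relies on, is Theorem \ref{thm:set_maxc} combined with Proposition \ref{prop:supp_sig_maxc}: the maximal classes are exactly the closed co-membership neighborhoods $\mathcal{C}_i=\{j \mid i\sim_G j\}$ that contain no pair of non-co-members, a description manifestly determined by $\sim_G$ alone. Replacing your clique characterization by this citation (or its proof) closes the gap; the remainder of your argument then goes through and matches the paper's proof.
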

\begin{proof}
Let $\mathbf{M}_1$ and $\mathbf{M}_2$ be two stationary VAR(1) models with the same dimension and different maximal classes. Then there exist two nodes $a,b\in[n]$ s.t. $a$ and $b$ belong to the same maximal class in $G_1$ and do not belong to the same maximal class in $G_2$, because otherwise by Theorem \ref{thm:supp_sig_maxc} and Proposition \ref{prop:set_maxc}, they have the same set of maximal classes. Therefore, by Proposition \ref{prop:J_G_0},
\begin{equation*}
    \mathbf{J}_{G_1}^{\left[\sigma_{ab}\right]} \neq \mathbf{0} \mbox{, and } \mathbf{J}_{G_2}^{\left[\sigma_{ab}\right]} = \mathbf{0}.
\end{equation*}
Let $A,B\in\mathbb{R}^n$. The notation $A\indep B$ indicates that $A$ and $B$ are linearly independent, and $A\dep B$ indicates that $A$ and $B$ are not linearly independent. Then, there exist $a',b'\in[n]$ where $\{a',b'\}\neq\{a,b\}$ s.t.
\begin{equation*}
    \mathbf{J}_{G_1}^{\left[\sigma_{ab}\right]} \indep \mathbf{J}_{G_1}^{\left[\sigma_{a'b'}\right]},
\end{equation*}
because otherwise for all $a',b'\in[n]$ where $\{a',b'\}\neq\{a,b\}$, we have
\begin{equation*}
    \mathbf{J}_{G_1}^{\left[\sigma_{ab}\right]} \dep \mathbf{J}_{G_1}^{\left[\sigma_{a'b'}\right]},
\end{equation*}
meaning that $\mbox{rank}\left(\mathbf{J}_{G_1}\right)\leq 1$, which contradicts the fact that $\mbox{rank}\left(\mathbf{J}_{G_1}\right) \geq n \geq 2$ (see Proposition \ref{prop:rank_n_n_r_n_c}). Additionally, since $\mathbf{J}_{G_2}^{\left[\sigma_{ab}\right]} = \mathbf{0}$, we have
\begin{equation*}
    \mathbf{J}_{G_2}^{\left[\sigma_{ab}\right]} \dep \mathbf{J}_{G_2}^{\left[\sigma_{a'b'}\right]}.
\end{equation*}
Combining the results above, we have found a pair of columns of the Jacobian matrix corresponding to $\sigma_{ab}$ and $\sigma_{a'b'}$ such that they are linearly independent in $\mathbf{M}_1$, and not in $\mathbf{M}_2$. Therefore, $\mathcal{J}\left(\mathbf{M}_1\right) \neq \mathcal{J}\left(\mathbf{M}_2\right)$, and hence by Proposition \ref{prop:gene_iden_J_dim}, the two models are generically identifiable.
\end{proof}

Note that Theorem \ref{thm:iden_maxc_dim} is restricted to models with the same dimension. The following theorem removes this restriction, at the cost of imposing stronger conditions on the maximal classes of the graphs. In particular, each of the two conditions in the theorem below implies the condition in Theorem \ref{thm:iden_maxc_dim}, but the equal-dimension assumption is no longer required. We illustrate this difference in Table  \ref{tab:sum_iden_res}, where graphs in rows 1 and 3 can be distinguished only by Theorem \ref{thm:iden_maxc_dim} and Theorem \ref{thm:iden_maxc}, respectively.

\begin{theorem}\label{thm:iden_maxc}
Let $\left\{\mathbf{M}_k\right\}_{k=1}^K$ be a finite set of stationary VAR(1) models corresponding to graphs $\left\{G_{k}\right\}_{k=1}^K$. If for all distinct pairs $\left(k_1,k_2\right)$ of values from $1$ to $K$, $G_{k_1}$ and $G_{k_2}$ satisfies the following two conditions:

\vspace{0.5em}
\noindent 1. There exist $i,j\in[n]$ s.t. $i,j$ belong to the same maximal class in $G_{k_1}$, but do not in $G_{k_2}$.

\vspace{0.5em}
\noindent 2. There exist $s,t\in[n]$ s.t. $s,t$ belong to the same maximal class in $G_{k_2}$, but do not in $G_{k_1}$.

\vspace{0.5em}
\noindent then the models (or the discrete parameter $k$) are generically identifiable.
\end{theorem}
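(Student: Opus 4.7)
The plan is to reduce the theorem to Proposition \ref{prop:gene_iden_J_dim} by exhibiting explicit singleton sets of coordinates that lie in one Jacobian matroid but not the other. The symmetry built into the two hypotheses is exactly what makes the argument work regardless of which of the two models has the larger dimension.

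First, I would fix a distinct pair $(k_1, k_2)$ and use Proposition \ref{prop:J_G_0} to translate each of the two hypotheses into a statement about columns of the Jacobian matrices. Condition 1 yields indices $i,j\in[n]$ (which we may take with $i\le j$) such that
\[
\mathbf{J}_{G_{k_1}}^{[\sigma_{ij}]} \neq \mathbf{0} \quad \text{and} \quad \mathbf{J}_{G_{k_2}}^{[\sigma_{ij}]} = \mathbf{0},
\]
and condition 2 yields indices $s,t$ with the roles of $k_1$ and $k_2$ swapped. Since a one-element set of column vectors is linearly independent iff the vector itself is nonzero, these two facts can be rephrased as
\[
\{\sigma_{ij}\} \in \mathcal{J}(\mathbf{M}_{k_1}) \setminus \mathcal{J}(\mathbf{M}_{k_2}), \quad \{\sigma_{st}\} \in \mathcal{J}(\mathbf{M}_{k_2}) \setminus \mathcal{J}(\mathbf{M}_{k_1}).
\]

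Next, I would invoke Proposition \ref{prop:gene_iden_J_dim}. That proposition, after sorting the two models by dimension, asks for a set of coordinates lying in the Jacobian matroid of the smaller-dimensional model but not in that of the larger. Whichever of the two orderings $\dim(\mathbf{M}_{k_1}) \geq \dim(\mathbf{M}_{k_2})$ or $\dim(\mathbf{M}_{k_2}) > \dim(\mathbf{M}_{k_1})$ happens to hold, exactly one of the two singletons produced above plays the required role: take $\{\sigma_{st}\}$ in the first case and $\{\sigma_{ij}\}$ in the second. In either case Proposition \ref{prop:gene_iden_J_dim} yields
\[
\dim(\mathbf{M}_{k_1}\cap\mathbf{M}_{k_2}) < \min\{\dim(\mathbf{M}_{k_1}),\dim(\mathbf{M}_{k_2})\} \leq \max\{\dim(\mathbf{M}_{k_1}),\dim(\mathbf{M}_{k_2})\},
\]
which is exactly the inequality demanded by Definition \ref{def:gene_iden_max}. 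Since the pair $(k_1,k_2)$ was arbitrary, generic identifiability of the whole family follows.

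There is no serious technical obstacle here once Proposition \ref{prop:J_G_0} is available; the only subtle point is to see why \emph{both} hypotheses are genuinely needed. Proposition \ref{prop:gene_iden_J_dim} is asymmetric in its two models, and without any a priori control over which of $\mathbf{M}_{k_1}$ or $\mathbf{M}_{k_2}$ has larger dimension one must be able to exhibit a distinguishing independent singleton in \emph{each} direction. Conditions 1 and 2 package precisely that symmetric information, which is the reason this strengthened hypothesis, in contrast to the single-direction condition of Theorem \ref{thm:iden_maxc_dim}, lets us dispense with the equal-dimension assumption.
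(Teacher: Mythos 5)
Your proof is correct and follows essentially the same route as the paper: both arguments translate the two symmetric maximal-class conditions into certificates lying in the two Jacobian matroid differences via Proposition \ref{prop:J_G_0} and then invoke Proposition \ref{prop:gene_iden_J_dim}, with the symmetry of the hypotheses covering whichever dimension ordering holds. The only (harmless) difference is that you use the singletons $\{\sigma_{ij}\}$ and $\{\sigma_{st}\}$ directly, since a nonzero column is already an independent one-element set, whereas the paper reuses the two-element independent sets constructed in the proof of Theorem \ref{thm:iden_maxc_dim}; your variant is marginally simpler because it does not need the rank lower bound from Proposition \ref{prop:rank_n_n_r_n_c}.
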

\begin{proof}
Let $\mathbf{M}_1$ and $\mathbf{M}_2$ be two stationary VAR(1) models corresponding to graphs $G_1$ and $G_2$ that satisfy the two conditions in the theorem. Then, from the proof of Theorem \ref{thm:iden_maxc_dim}, we know that:
\begin{equation*}
    \exists S_1\in \mathcal{J}\left(\mathbf{M}_1\right) \backslash \mathcal{J}\left(\mathbf{M}_2\right) \mbox{ and } \exists S_2\in \mathcal{J}\left(\mathbf{M}_2\right) \backslash \mathcal{J}\left(\mathbf{M}_1\right).
\end{equation*}
Therefore, by Proposition \ref{prop:gene_iden_J_dim}, $\mathbf{M}_1$ and $\mathbf{M}_2$ are generically identifiable, no matter the dimensions.
\end{proof}

\begin{example}
Figure \ref{fig:MC_iden} presents two graphs that are generically identifiable using the results introduced in this section.
\begin{figure}[!tb]
\centering
\begin{subfigure}{.48\textwidth}
  \centering
  \includegraphics[width=0.45\linewidth]{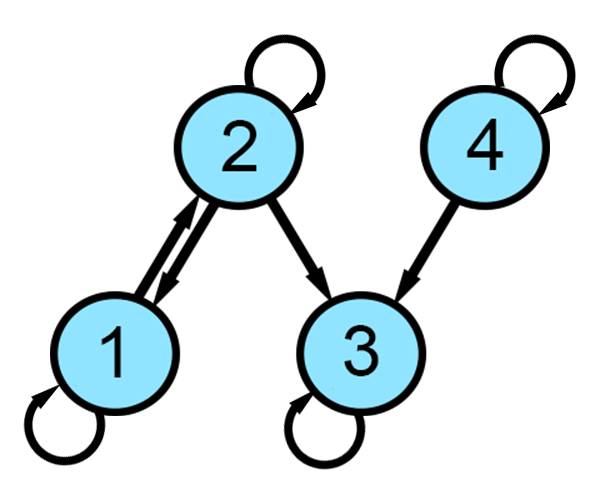}
  \caption{$G_1$}
\end{subfigure}%
\begin{subfigure}{.48\textwidth}
  \centering
  \includegraphics[width=0.45\linewidth]{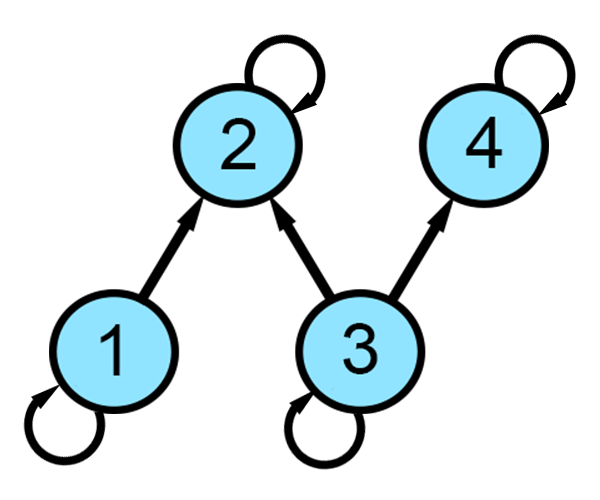}
  \caption{$G_2$}
\end{subfigure}
\caption{Identifiable graphs using Theorem \ref{thm:iden_maxc}}
\label{fig:MC_iden}
\end{figure}
The sets of maximal classes $\mathfrak{MC}_1$ of $G_1$ and $\mathfrak{MC}_2$ of $G_2$ are:
\begin{equation*}
    \mathfrak{MC}_1 = \left\{\{1,2,3\},\{3,4\}\right\}, \mbox{ and } \mathfrak{MC}_2 = \left\{\{1,2\}, \{2,3,4\}\right\}.
\end{equation*}
Because at this point, we don't know the dimension of the models, we can only apply Theorem \ref{thm:iden_maxc}. In fact, nodes $1$ and $3$ belong to the same maximal class in $G_1$, but not in $G_2$. Similarly, nodes $2$ and $4$ belong to the same maximal class in $G_2$, but not in $G_1$. By Theorem \ref{thm:iden_maxc}, the two graphs are generically identifiable.
\end{example}

While Theorems \ref{thm:iden_maxc_dim} and \ref{thm:iden_maxc} already provide criteria for generic identifiability that are easy to check, if the dimension of the model could be calculated from the graph, we would only need to focus on the identifiability of models with the same dimension, and always apply the weaker condition on maximal classes introduced in Theorem \ref{thm:iden_maxc_dim}. Indeed, the following section presents that if we consider only a subset of models whose associated graphs do not have reciprocal edges, meaning that $(i,j)\in\mathfrak{E}_G$ implies that $(j,i)\notin\mathfrak{E}_G$, we are able to derive the exact dimension of the model from the graph.

\subsection{Dimension of the model for reflexive graphs with no reciprocal edges}\label{sec:dim_mod}
This section demonstrates that for a class of models whose corresponding graphs are reflexive with no reciprocal edges, the dimension (i.e., the rank of the Jacobian matrix) can be determined from the graph. Using this result, we provide additional identifiability criteria based on the dimensions of the model, complementing those in Section \ref{sec:iden_maxc}. By integrating these results, we are able to expand the scope of models that are identifiable. On the other hand, for the cases where reciprocal edges often exist, such as in ecological research, the results in this section do not apply anymore.

The following assumption excludes reciprocal edges, which is needed in the study of the rank of the Jacobian matrix. In particular, it ensures the existence of "triplets" in Lemma \ref{lem:triplets}, which is crucial in proving that the Jacobian matrix is of full rank in Theorem \ref{thm:rank_no_multi_edges}.

\begin{assumption}\label{asmp:no_multi_edges}
Any graph $G$ is assumed to be reflexive and to have no reciprocal edges. That is, $\left(i,i\right)\in\mathfrak{E}_G$ for all $i\in [n]$, and $\forall i\neq j\in[n]$, $(i,j)\in \mathfrak{E}_G$ implies $(j,i)\notin \mathfrak{E}_G$.
\end{assumption}

\begin{theorem}\label{thm:rank_no_multi_edges}
Let $\mathbf{M}_G$ be a stationary VAR(1) model that satisfies Assumption \ref{asmp:no_multi_edges}, then
\begin{equation}\label{eq:rank_J_G}
    \mbox{rank}(\mathbf{J}_G) = \min\left\{n_r,n_c'\right\},
\end{equation}
where $n_r$ and $n_c'$ are defined in Proposition \ref{prop:rank_n_n_r_n_c}, is generically true, i.e., the set of values such that (\ref{eq:rank_J_G}) is not true has Lebesgue measure zero.
\end{theorem}

Note that by definition, $n_r$ is the number of rows, and $n_c'$ is the number of non-zero columns of $\mathbf{J}_G$. Thus Theorem \ref{thm:rank_no_multi_edges} in fact indicates that all of $\mathbf{J}_G$'s non-zero columns form a full rank matrix.

\begin{example}
Consider a stationary VAR(1) model with the corresponding graph below. The set of maximal classes $\mathfrak{MC}$, $n_r$ and $n_c'$ are:

\begin{minipage}{0.58\textwidth}
\begin{align*}
    \mathfrak{MC} = &\left\{\{1,2,3,4\}, \{3,5\}\right\}; \\
    n_r = &E_G +1 = 5+4+1 = 10; \\
    n_c' = &|\left\{\{1,1\}, \{2,2\}, \{3,3\}, \{4,4\}, \{5,5\},\right. \\
    &\left.\{1,2\}, \{1,3\}, \{1,4\}, \{2,3\}, \{2,4\}, \{3,4\}, \{3,5\}\right\}| = 12.
\end{align*}
\end{minipage}
\begin{minipage}{0.38\textwidth}
\centering
\includegraphics[width=0.6\textwidth]{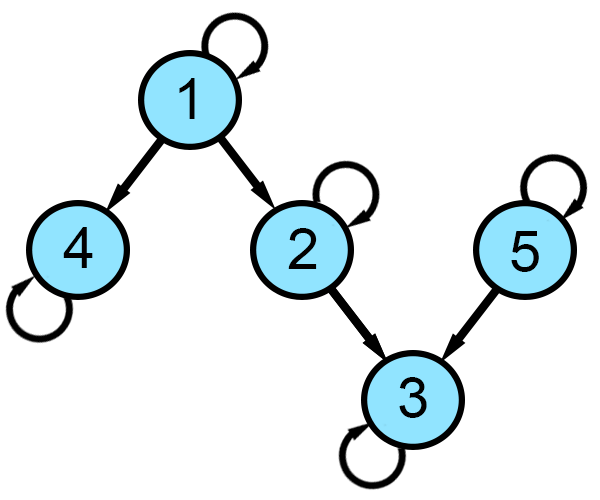}
\end{minipage}
Therefore,
\begin{equation*}
    \dim\left(\mathbf{M}_G\right) = \mbox{rank}\left(\mathbf{J}_G\right) = \min\left\{n_r,n_c'\right\} = 10.
\end{equation*}
\end{example}

The proof of Theorem \ref{thm:rank_no_multi_edges} is technical, and aligns with generic settings throughout this paper. It is divided into two separate cases: $n_r\leq n_c'$ and $n_r> n_c'$. We only introduce the proof for the first case, since the proof of the second case resembles the first one. For a complete proof of the second case, see Appendix \ref{app:prf_2_thm_rank_no_multi_edges}.

Now, assume $n_r\leq n_c'$, and the goal is to prove that under this condition, $rank\left(\mathbf{J}_G\right)= n_r$. For this, Lemmas \ref{lem:exist_kl}-\ref{lem:B_G_rank_nr} are needed. They transform the rank of the Jacobian matrix to that of another matrix that is derived from the graph, and prove that this matrix is full rank generically, based on a special graphical structure that exists in all graphs in this case.

Let $\mathfrak{E}_G'$ be the set of edges excluding self-loops, and $E_G' = |\mathfrak{E}_G'|$. Define the set of pairs of nodes such that they are not directly connected by one edge, but belong to the same maximal class:
\begin{equation*}
    \mathfrak{C}_G^{mc} := \left\{\{k,l\} \mid k,l\in[n], (k,l),(l,k)\notin \mathfrak{E}_G', \mbox{ and } k,l \mbox{ belong to the same maximal class}\right\},
\end{equation*}
and denote the cardinality of $\mathfrak{C}_G^{mc}$ as $C_G^{mc}$. Under Assumption \ref{asmp:no_multi_edges}, the following lemma introduces a necessary and sufficient condition for $n_r\leq n_c'$.

\begin{lemma}\label{lem:exist_kl}
Let $\mathbf{M}_G$ be a stationary VAR(1) model that satisfies Assumption \ref{asmp:no_multi_edges}, then $n_r\leq n_c'$ if and only if there exist two nodes $k,l\in\left[n\right]$, s.t. $\{k,l\}\in \mathfrak{C}_G^{mc}$.
\end{lemma}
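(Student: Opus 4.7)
The plan is to turn the inequality $n_r \leq n_c'$ into a pure counting identity and then read off the equivalence. The key observation is that under Assumption \ref{asmp:no_multi_edges}, every non-self-loop edge $(a,b)\in\mathfrak{E}_G'$ corresponds to exactly one unordered pair $\{a,b\}$, and its two endpoints automatically belong to the same maximal class (this is the first condition of Lemma \ref{lem:mc_path}). Combined with the standing assumption that every self-loop $(i,i)$ lies in $\mathfrak{E}_G$, this gives clean expressions for both sides of the inequality.

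First, I would unpack $n_r$. Since all $n$ self-loops are always present, $E_G = n + E_G'$, and therefore
\begin{equation*}
    n_r = E_G + 1 = n + E_G' + 1.
\end{equation*}
Next, I would partition the unordered pairs counted by $n_c'$ into three disjoint classes: (i) the $n$ diagonal pairs $\{a,a\}$, which always belong to the same maximal class; (ii) off-diagonal pairs $\{a,b\}$ with $a\neq b$ such that one of $(a,b),(b,a)$ lies in $\mathfrak{E}_G'$; and (iii) off-diagonal pairs $\{a,b\}$ such that neither $(a,b)$ nor $(b,a)$ lies in $\mathfrak{E}_G'$ yet $a,b$ share a maximal class. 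By Assumption \ref{asmp:no_multi_edges} and Lemma \ref{lem:mc_path}, class (ii) is in bijection with $\mathfrak{E}_G'$, so it has cardinality $E_G'$; class (iii) is exactly $\mathfrak{C}_G^{mc}$, with cardinality $C_G^{mc}$. Hence
\begin{equation*}
    n_c' = n + E_G' + C_G^{mc}.
\end{equation*}

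Substituting both expressions, the inequality $n_r \leq n_c'$ collapses to $1 \leq C_G^{mc}$, i.e.\ $\mathfrak{C}_G^{mc}$ is nonempty, which is precisely the existence of the pair $\{k,l\}$ in the statement. Both directions follow from this equivalence.

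The only step that requires any care is the bijection in class (ii): one must rule out double counting, which is exactly what Assumption \ref{asmp:no_multi_edges} provides (without it, a pair $\{a,b\}$ with both $(a,b)$ and $(b,a)$ in $\mathfrak{E}_G'$ would contribute twice to $E_G'$ but only once to $n_c'$, breaking the count). Once this is observed, the argument is a straightforward bookkeeping exercise, and I do not anticipate a substantial obstacle.
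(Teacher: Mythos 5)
Your proof is correct and follows essentially the same counting argument as the paper: under Assumption \ref{asmp:no_multi_edges} the pairs in a common maximal class joined by an edge are in bijection with the edges, so comparing $n_r=E_G+1$ with $n_c'$ reduces to whether $\mathfrak{C}_G^{mc}$ is empty. Your exact identity $n_c' = n + E_G' + C_G^{mc}$ is just a slightly more explicit bookkeeping of what the paper argues by contradiction, and it has the minor advantage of giving both directions of the equivalence at once.
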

\begin{proof}
Assume $\forall k,l \in [n]$, $\{k,l\}\notin \mathfrak{C}_G^{mc}$. Then either $(k,l)$ or $(l,k)\in\mathfrak{E}_G$ or $k,l$ do not belong to the same maximal class, i.e., $(\Sigma\Lambda)_{kl}=(\Sigma\Lambda)_{lk}=0$. In this case, $n_c' = E_G$ because for all pairs of nodes $i,j\in[n]$, $(\Sigma\Lambda)_{ij}\neq 0$ if and only if $\left(i,j\right)$ or $\left(j,i\right)\in \mathfrak{E}_G$ (but never at the same time because of Assumption \ref{asmp:no_multi_edges}). Since $n_r=E_G+1$, $n_r>n_c'$, which contradicts the premise.
\end{proof}

Lemma \ref{lem:exist_kl} indicates that under Assumption \ref{asmp:no_multi_edges}, the graphs that don't satisfy $n_r\leq n_c'$ are the ones such that there are at least two maximal classes, and the undirected subgraph of each maximal class is complete.

Next, we introduce a new matrix $B_G$ derived from the Jacobian matrix. And it will be shown in Lemma \ref{lem:rank_J_B_nr} that the Jacobian matrix is full rank if the matrix $B_G$ is full rank.

\begin{definition}\label{def:B_G_1}
Let $\mathbf{M}_G$ be a stationary VAR(1) model that satisfies the condition $n_r\leq n_c'$. Then consider a square matrix of size $E_G'+1$, where the rows correspond to $\lambda_{ij}$, s.t. $\left(i,j\right)\in \mathfrak{E}_G'$, and $\omega$, and the columns correspond to $\sigma_S$, where 
\begin{equation*}
    S = \left\{\{i,j\} \mid (i,j)\in \mathfrak{E}_G'\right\} \cup \{\{k,l\}\}, \mbox{ for any } \{k,l\} \in\mathfrak{C}_G^{mc}.
\end{equation*}
This matrix, denoted as $B_G$, is defined as
\begin{align*}
    B_G^{\left[\lambda_{ij},\sigma_{ab}\right]} &= \delta_{ja}\left[\left(\Sigma\Lambda\right)_{ib}\left(\Sigma\Lambda\right)_{jj} - \left(\Sigma\Lambda\right)_{ij}\left(\Sigma\Lambda\right)_{jb}\right] + \delta_{jb}\left[\left(\Sigma\Lambda\right)_{ia}\left(\Sigma\Lambda\right)_{jj} - \left(\Sigma\Lambda\right)_{ij}\left(\Sigma\Lambda\right)_{ja}\right]; \\
    B_G^{\left[\omega,\sigma_{ab}\right]} &= \left(\Sigma\Lambda\right)_{ab}\left(\Sigma\Lambda\right)_{aa}^{-1} + \left(\Sigma\Lambda\right)_{ba}\left(\Sigma\Lambda\right)_{bb}^{-1}, \\
\end{align*}
where $a,b\in[n]$ and $\{a,b\}\in S$.
\end{definition}

Note that given a directed graph $G$, $B_G$ might not be unique, because when $C_G^{mc}>1$, there are multiple options for $S$.

\begin{example}
Consider a stationary VAR(1) model $\mathbf{M}_G$ with a corresponding directed graph $G=\left(V,\mathfrak{E}_G\right)$, where $V=\left\{1,2,3\right\}$ and $\mathfrak{E}_G=\left\{\left(1,2\right),\left(2,3\right)\right\}$. Here there exists only one $B_G$, which is 
\begin{align*}
    B_G = &\left[\begin{array}{cc}
        \left(\Sigma\Lambda\right)_{11}\left(\Sigma\Lambda\right)_{22}-\left(\Sigma\Lambda\right)_{12}\left(\Sigma\Lambda\right)_{21} & \left(\Sigma\Lambda\right)_{13}\left(\Sigma\Lambda\right)_{22}-\left(\Sigma\Lambda\right)_{12}\left(\Sigma\Lambda\right)_{23} \\
        0 & \left(\Sigma\Lambda\right)_{22}\left(\Sigma\Lambda\right)_{33}-\left(\Sigma\Lambda\right)_{23}\left(\Sigma\Lambda\right)_{32} \\
        \left(\Sigma\Lambda\right)_{12}\left(\Sigma\Lambda\right)_{11}^{-1}+
        \left(\Sigma\Lambda\right)_{21}\left(\Sigma\Lambda\right)_{22}^{-1} & \left(\Sigma\Lambda\right)_{23}\left(\Sigma\Lambda\right)_{22}^{-1}+
        \left(\Sigma\Lambda\right)_{32}\left(\Sigma\Lambda\right)_{33}^{-1}
    \end{array}\right. \\
    &\left. \begin{array}{c}
        0 \\ \left(\Sigma\Lambda\right)_{21}\left(\Sigma\Lambda\right)_{33}-\left(\Sigma\Lambda\right)_{23}\left(\Sigma\Lambda\right)_{31} \\ \left(\Sigma\Lambda\right)_{13}\left(\Sigma\Lambda\right)_{11}^{-1}+
        \left(\Sigma\Lambda\right)_{31}\left(\Sigma\Lambda\right)_{33}^{-1}
    \end{array}\right].
\end{align*}
\end{example}

\begin{lemma}\label{lem:rank_J_B_nr}
Let $\mathbf{M}_G$ be a stationary VAR(1) model that satisfies Assumption \ref{asmp:no_multi_edges} and the condition $n_r\leq n_c'$. Then $\mbox{rank}\left(\mathbf{J}_G\right)=n_r$ if there exists a $B_G$ that is full rank, i.e., $\mbox{rank}\left(B_G\right)=E_G'+1$.
\end{lemma}
\begin{proof}[Sketch of proof]
By Lemma \ref{lem:rank_J_G_psi}, $\mbox{rank}\left(\mathbf{J}_G\right) = \mbox{rank}\left(\psi_G(B)\right)$, and therefore it is sufficient to prove that $\mbox{rank}\left(\psi_G(B)\right) = n_r$. Consider a $\left(E_G+1\right)\times\left(E_G+1\right)$ submatrix of $\psi_G\left(B\right)$, where the columns correspond to the set
\begin{equation*}
    \left\{\sigma_{ij} \mid i,j\in\left[n\right],(i,j) \mbox{ or } (j,i)\in\mathfrak{E}_G\right\} \cup \left\{\sigma_{kl}\right\},
\end{equation*}
where $\left\{k,l\right\}\in\mathfrak{C}_G^{mc}$. Note that $\mathfrak{C}_G^{mc}\neq \emptyset$ by Lemma \ref{lem:exist_kl}. In fact, this submatrix is a block matrix after reordering rows and columns. Apply the the Guttman rank additivity formula (see \cite{guttman1944general}), this matrix is full rank if and only if $B_G$ is full rank, up to a multiplication of a diagonal matrix with non-zero diagonals. For complete proof, see Appendix \ref{app:prf_lem_J_B_nr}.
\end{proof}

Next, we prove that under Assumption \ref{asmp:no_multi_edges} and the condition $n_r\leq n_c'$, $B_G$ is generically full rank, i.e., the Jacobian matrix $\mathbf{J}_G$ is full rank.

\begin{lemma}\label{lem:triplets}
Let $\mathbf{M}_G$ be a stationary VAR(1) model that satisfies the condition $n_r\leq n_c'$ and Assumption \ref{asmp:no_multi_edges}. Then there exists three nodes: $k,a,l\in\left[n\right]$ s.t. one of the following circumstances holds:

\vspace{0.5em}
\noindent 1. $\left(k,a\right),\left(a,l\right)\in\mathfrak{E}_G'$ and $\left(k,l\right),\left(l,k\right)\notin \mathfrak{E}_G'$.

\vspace{0.5em}
\noindent 2. $\left(a,k\right),\left(a,l\right)\in\mathfrak{E}_G'$ and $\left(k,l\right),\left(l,k\right)\notin \mathfrak{E}_G'$.
\end{lemma}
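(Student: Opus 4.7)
The plan is to do a case analysis, reading off the required triple either directly from graph structure or from a minimality argument. By Lemma \ref{lem:exist_kl}, the hypothesis $n_r \leq n_c'$ supplies a pair $\{k_0, l_0\} \in \mathfrak{C}_G^{mc}$: $k_0$ and $l_0$ lie in the same maximal class with $(k_0, l_0), (l_0, k_0) \notin \mathfrak{E}_G'$. By Lemma \ref{lem:mc_path}, one of two scenarios holds: (a) there is a directed path between $k_0$ and $l_0$, or (b) they have a common ancestor distinct from both.

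In case (a), I would fix a \emph{shortest} directed path, without loss of generality from $k_0$ to $l_0$, say $k_0 = v_0 \to v_1 \to \cdots \to v_m = l_0$ with $m \geq 2$. If $m = 2$ the triple $(k_0, v_1, l_0)$ already satisfies circumstance $1$: the two path edges are present and $(k_0, l_0), (l_0, k_0) \notin \mathfrak{E}_G'$ by construction. For $m \geq 3$ I propose an induction: starting from the sub-path $v_0 \to v_1 \to v_2$, either the triple $(v_0, v_1, v_2)$ witnesses circumstance $1$ — shortest-path minimality already forbids the shortcut $(v_0, v_2) \in \mathfrak{E}_G'$, so the only obstruction is $(v_2, v_0) \in \mathfrak{E}_G'$, which I carry forward as a back-edge. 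Given a back-edge $(v_j, v_0) \in \mathfrak{E}_G'$ for some $j \geq 2$, I examine the triple $(v_0, v_j, v_{j+1})$: the edges $(v_j, v_0)$ and $(v_j, v_{j+1})$ are present, so either circumstance $2$ holds (shortest-path minimality again forbids $(v_0, v_{j+1}) \in \mathfrak{E}_G'$) or $(v_{j+1}, v_0) \in \mathfrak{E}_G'$ is a new back-edge propagating the induction. The process terminates at $j + 1 = m$: then $v_{j+1} = l_0$, so both $(v_0, v_{j+1}) = (k_0, l_0)$ and $(v_{j+1}, v_0) = (l_0, k_0)$ are excluded by $\{k_0, l_0\} \in \mathfrak{C}_G^{mc}$, and the triple $(v_0, v_{m-1}, v_m)$ witnesses circumstance $2$.

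In case (b), I would choose a common ancestor $c$ of $k_0$ and $l_0$ (distinct from both, which exists by Lemma \ref{lem:mc_path}) minimizing $p + q$, where $p$ and $q$ are the shortest distances from $c$ to $k_0$ and $l_0$ respectively. Writing shortest paths $c = u_0 \to \cdots \to u_p = k_0$ and $c = w_0 \to \cdots \to w_q = l_0$, minimality of $c$ forces $u_1 \neq w_1$, and the absence of any directed path between $k_0$ and $l_0$ excludes $u_1, w_1 \in \{k_0, l_0\}$ (otherwise one would get such a path). The triple $(u_1, c, w_1)$ is then the candidate witness for circumstance $2$: $(c, u_1)$ and $(c, w_1)$ are present. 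If circumstance $2$ fails, then without loss of generality $(u_1, w_1) \in \mathfrak{E}_G'$, and concatenating with the path $w_1 \to \cdots \to l_0$ gives $u_1$ a path of length $\leq q$ to $l_0$ together with a path of length $p-1$ to $k_0$; thus $u_1$ is a strictly closer common ancestor than $c$, contradicting the minimality of $c$.

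The main technical obstacle is the careful bookkeeping of boundary cases — $m = 2$ and the terminal step $j + 1 = m$ in case (a), and the various possibilities $u_1 \in \{k_0, l_0\}$ or $w_1 \in \{k_0, l_0\}$ in case (b). Each such boundary must be shown to either produce the desired triple directly or collapse the situation into the other case; shortest-path minimality (in case (a)) and the minimality of $c$ (in case (b)) are the key tools that rule out the ``forward shortcut'' alternative in every application of the argument, leaving only back-edges which eventually force a triple satisfying circumstance $1$ or $2$.
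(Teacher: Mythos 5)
Your proposal is correct in substance, and it is considerably more than the paper offers: the paper's entire proof of this lemma consists of invoking Lemma \ref{lem:exist_kl} to get a pair $\{k_0,l_0\}\in\mathfrak{C}_G^{mc}$ and then asserting that ``this ensures the existence of the triplets,'' with no extraction argument at all. You start from the same pair, but then actually carry out the missing combinatorics: splitting via Lemma \ref{lem:mc_path} into the directed-path case and the common-ancestor case, and using shortest-path minimality (respectively minimality of $p+q$ over admissible common ancestors) so that the only possible obstruction to each candidate triple is a back-edge, which either propagates the induction along the path or terminates at the non-adjacent pair $\{k_0,l_0\}$, forcing circumstance $1$ or $2$. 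The path-case induction is sound: the shortcut $(v_0,v_{j+1})$ is always excluded by minimality while $j+1<m$, and at $j+1=m$ both $(k_0,l_0)$ and $(l_0,k_0)$ are excluded by membership in $\mathfrak{C}_G^{mc}$, so the process must stop with a valid witness. What your version buys is a complete, checkable argument; what the paper's version buys is brevity at the cost of leaving precisely this step to the reader.

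One small inaccuracy in case (b): the absence of a directed path between $k_0$ and $l_0$ rules out $u_1=l_0$ and $w_1=k_0$, but it does not rule out $u_1=k_0$ (i.e.\ $p=1$) or $w_1=l_0$ (i.e.\ $q=1$), contrary to what you assert. These subcases are harmless: if both hold, the triple $(k_0,c,l_0)$ already satisfies circumstance $2$ because $(k_0,l_0),(l_0,k_0)\notin\mathfrak{E}_G'$; and if, say, $u_1=k_0$ and circumstance $2$ fails for $(u_1,c,w_1)$, then $(k_0,w_1)\in\mathfrak{E}_G'$ yields a directed path from $k_0$ to $l_0$, contradicting the case hypothesis, while $(w_1,k_0)\in\mathfrak{E}_G'$ makes $w_1$ a strictly closer common ancestor than $c$. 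So the boundary subcases should be handled explicitly rather than declared excluded, but with that routine patch your proof is complete.
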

\begin{proof}
From Lemma \ref{lem:exist_kl}, we know that there exist two nodes $k,l\in[n]$ such that $\{k,l\}\in\mathfrak{C}_G^{mc}$, i.e., $(k,l),(l,k)\notin \mathfrak{E}_G'$, but $k$ and $l$ belong to the same maximal class. This ensures the existence of the triplets.
\end{proof}

The following lemma states that the set of values such that $B_G$ is not full rank has zero Lebesgue measure. We keep the proof in the main text because it again aligns with the generic settings of this paper, and uses the existence of the "triplet" structure introduced in Lemma \ref{lem:triplets}, which is technical and insightful.

\begin{lemma}\label{lem:B_G_rank_nr}
Let $\mathbf{M}_G$ be a stationary VAR(1) model that satisfies Assumption \ref{asmp:no_multi_edges} and $n_r\leq n_c'$. Define a subset $M_G^B$ of $M_G$:
\begin{equation*}
    M_G^B := \left\{\Lambda\in M_G \mid \det\left(B_G\right)=0\right\},
\end{equation*}
then
\begin{equation*}
    \mu_G\left(M_G^B\right) = 0,
\end{equation*}
where $\mu_G$ is the Lebesgue measure defined on $\mathbb{R}^{E_G}$, with $E_G$ denoting the number of edges of graph $G$.
\end{lemma}
\begin{proof}
First, we prove that the function
\begin{align*}
    f_G: M_G \times \mathbb{R}^+ &\rightarrow \mathbb{R} \\
    \left(\Lambda,\omega\right) &\mapsto \det\left(B_G\right)
\end{align*}
is rational. From Appendix \ref{app:para_rat}, we know that the parametrization map
\begin{align*}
	\phi_G : M_G \times \mathbb{R}^+ & \rightarrow M_n\left(\mathbb{R}\right) \\
	\left(\Lambda,\omega\right) & \mapsto \Sigma, \mbox{ s.t. } \vec\left(\Sigma\right) = \left(I_n-\Lambda^T\otimes\Lambda^T\right)^{-1}\vec\left(\omega I_n\right),
\end{align*}
is rational, i.e., $\Sigma$ is a matrix whose elements are rational functions with respect to the elements of $\Lambda$ and $\omega$. Therefore, $\left(\Sigma\Lambda\right)$ is also a matrix whose elements are rational functions. By definition, the elements of $B_G$ are the elements of $\left(\Sigma\Lambda\right)$ after summations, multiplications, and inversions. Hence, they are also rational. Since the determinant is also a result of summations and multiplications of the elements of the matrix, $f_G$ is rational, i.e., $\exists g,h\in \mathbb{R}\left[\left(\lambda_{ij} \mid \left(i,j\right)\in\mathfrak{E}_G\right),\omega\right]$ polynomials s.t.
\begin{equation*}
    f_G = \frac{g}{h}.
\end{equation*}

Next, we prove that the set of parameters such that $h=0$ has Lebesgue measure zero. Recall from (\ref{eq:vec_Sigma}), for all $i,j\in\left[n\right]$,
\begin{align*}
    \left(\Sigma\Lambda\right)_{ij} &= \sum_{k=1}^n\sigma_{ik}\lambda_{kj} = \sum_{k=1}^n\left(\vec\left(\Sigma\right)\right)_{\left(k-1\right)n+i}\lambda_{kj} \\
    &= \omega\sum_{k=1}^n\sum_{l=1}^n\left(I_{n^2}-\Lambda^T\otimes\Lambda^T\right)^{-1}_{\left(k-1\right)n+i,\left(l-1\right)n+l}\lambda_{kj}.
\end{align*}
Since
\begin{equation*}
    \left(I_{n^2}-\Lambda^T\otimes\Lambda^T\right)^{-1} = \frac{1}{\det\left(I_{n^2}-\Lambda^T\otimes\Lambda^T\right)}\adj\left(I_{n^2}-\Lambda^T\otimes\Lambda^T\right),
\end{equation*}
\begin{equation*}
    \left(\Sigma\Lambda\right)_{ij} = \frac{P}{Q},
\end{equation*}
where $\adj(\cdot)$ is the adjugate matrix, $P\in \mathbb{R}\left[\left(\lambda_{ij} \mid \left(i,j\right)\in\mathfrak{E}_G\right),\omega\right]$, and $Q=\det\left(I_{n^2}-\Lambda^T\otimes\Lambda^T\right)\in\mathbb{R}\left[\lambda_{ij}\mid \left(i,j\right)\in\mathfrak{E}_G\right]$. Therefore $\exists m\in\mathbb{N}$ s.t. 
\begin{equation*}
    h=Q^m\prod_{i=1}^n\left(\Sigma\Lambda\right)_{ii}.
\end{equation*}
From the fact that $\left(\Sigma\Lambda\right)_{ii}\neq 0$ for all $i\in\left[n\right]$,
\begin{equation*}
    h=0 \Leftrightarrow Q=0 \Leftrightarrow \det\left(I_{n^2} - \Lambda^T\otimes\Lambda^T\right) = 0 \Leftrightarrow \det\left(I_{n^2} - \Lambda\otimes\Lambda\right) = 0.
\end{equation*}
Therefore,
\begin{equation*}
    \mu_G\left(\left\{\Lambda\in M_G \mid h=0\right\}\right) = \mu_G\left(\left\{\Lambda\in M_G \mid\left(I_{n^2} - \Lambda\otimes\Lambda\right)\mbox{ is not invertible}\right\}\right) = 0.
\end{equation*}

Finally, we prove that the set of parameters such that $g=0$ has Lebesgue measure zero. Since $\det\left(B_G\right)$ is rational, it's sufficient to prove that for any directed graph $G=\left(V,\mathfrak{E}_G\right)$ that satisfies the premises, there exists $\Lambda_0\in M_G$, s.t. $\det\left(B_{G_0}\right)\neq 0$, where $B_{G_0}$ is $B_G$ with $\Lambda=\Lambda_0$.
From Lemma \ref{lem:triplets}, there exists three nodes $a,k,l\in\left[n\right]$, s.t. either $\left(k,a\right),\left(a,l\right)\in\mathfrak{E}_G'$ and $\left(k,l\right),\left(l,k\right)\notin\mathfrak{E}_G'$ (Case \ref{case:1_prf_B_G_rank_nr}), or $\left(a,k\right),\left(a,l\right)\in\mathfrak{E}_G'$ and $\left(k,l\right),\left(l,k\right)\notin\mathfrak{E}_G'$ (Case \ref{case:2_prf_B_G_rank_nr}). In both scenarios, $k,l$ belong to the same maximal class. Consider $B_G$ with rows correspond to $\lambda_{ij}$, where $\left(i,j\right)\in\mathfrak{E}_G'$ and $\omega$, and columns correspond to $\sigma_{ab}$, where $\left(a,b\right)\in\mathfrak{E}_G'$, and $\sigma_{kl}$. We discuss the two cases separately.

\setcounter{case}{0}
\begin{case}\label{case:1_prf_B_G_rank_nr} Consider $\Lambda_0=\left(\lambda_{ij}^0\right)$ that satisfies:
\begin{equation*}
    \lambda_{ij}^0 = \left\{\begin{array}{lcl}
        \lambda_{ij}^0\neq 0 &,& i=j \mbox{ or } \left(i,j\right) = \left(k,a\right) \mbox{ or } \left(i,j\right) = \left(a,l\right) \\
        0 &,& \mbox{otherwise}
    \end{array}\right.,
\end{equation*}
then
\begin{equation*}
    \left(\Sigma_0\Lambda_0\right)_{ij} \neq 0 \Leftrightarrow i=j \mbox{ or } \left\{i,j\right\} \in\left\{\left\{k,a\right\}, \left\{a,l\right\}, \left\{k,l\right\}\right\}.
\end{equation*}
Order the rows and columns of $B_G$ such that the rows correspond to $\lambda_{ka}$, $\lambda_{al}$, $\omega$ and $\lambda_{ij}$, and the columns correspond to $\sigma_{ka}$, $\sigma_{al}$, $\sigma_{kl}$ and $\sigma_{ij}$ respectively, where $\left(i,j\right)$ represents all the edges in $\mathfrak{E}_G'$ other than $\left(k,a\right)$ and $\left(a,l\right)$. Note that the columns are ordered along with the corresponding rows. Consider the matrix $B_{G_0}$, which is defined as $B_G$ with $\Lambda=\Lambda_0$, and define it as a block matrix:
\begin{equation*}
    B_{G_0} = \left[\begin{array}{cc}
        A & B \\
        C & D
    \end{array}\right],
\end{equation*}
where $A$ is a $3\times 3$ matrix consisting of the first three rows and columns of $B_{G_0}$. Then
\begin{align*}
    &A = \\
    &\left[\begin{array}{ccc}
        \left(\Sigma_0\Lambda_0\right)_{kk}\left(\Sigma_0\Lambda_0\right)_{aa} - \left(\Sigma_0\Lambda_0\right)_{ka}\left(\Sigma_0\Lambda_0\right)_{ak} & \left(\Sigma_0\Lambda_0\right)_{kl}\left(\Sigma_0\Lambda_0\right)_{aa} - \left(\Sigma_0\Lambda_0\right)_{ka}\left(\Sigma_0\Lambda_0\right)_{al} & 0 \\
        0 & \left(\Sigma_0\Lambda_0\right)_{aa}\left(\Sigma_0\Lambda_0\right)_{ll} - \left(\Sigma_0\Lambda_0\right)_{al}\left(\Sigma_0\Lambda_0\right)_{la} & \left(\Sigma_0\Lambda_0\right)_{ak}\left(\Sigma_0\Lambda_0\right)_{ll} - \left(\Sigma_0\Lambda_0\right)_{al}\left(\Sigma_0\Lambda_0\right)_{lk} \\
        \left(\Sigma_0\Lambda_0\right)_{ka}\left(\Sigma_0\Lambda_0\right)_{kk}^{-1} + \left(\Sigma_0\Lambda_0\right)_{ak}\left(\Sigma_0\Lambda_0\right)_{aa}^{-1} & \left(\Sigma_0\Lambda_0\right)_{al}\left(\Sigma_0\Lambda_0\right)_{aa}^{-1} + \left(\Sigma_0\Lambda_0\right)_{la}\left(\Sigma_0\Lambda_0\right)_{ll}^{-1} & \left(\Sigma_0\Lambda_0\right)_{kl}\left(\Sigma_0\Lambda_0\right)_{kk}^{-1} + \left(\Sigma_0\Lambda_0\right)_{lk}\left(\Sigma_0\Lambda_0\right)_{ll}^{-1}
    \end{array}\right].
\end{align*}
For all $\left(i,j\right)\in\mathfrak{E}_G'$, s.t. $\left\{i,j\right\}\notin\left\{\left\{a,k\right\},\left\{a,l\right\}, \left\{k,l\right\}\right\}$, it's clear that
\begin{equation*}
    B_{G_0}^{\left[\omega,\sigma_{ij}\right]} = \left(\Sigma_0\Lambda_0\right)_{ij}\left(\Sigma_0\Lambda_0\right)_{ii}^{-1} + \left(\Sigma_0\Lambda_0\right)_{ji}\left(\Sigma_0\Lambda_0\right)_{jj}^{-1} = 0.
\end{equation*}
And
\begin{align*}
    B_{G_0}^{\left[\lambda_{ka},\sigma_{ij}\right]} = &\delta_{ai}\left[\left(\Sigma_0\Lambda_0\right)_{kj}\left(\Sigma_0\Lambda_0\right)_{aa} - \left(\Sigma_0\Lambda_0\right)_{ka}\left(\Sigma_0\Lambda_0\right)_{aj}\right] \\
    &+ \delta_{aj}\left[\left(\Sigma_0\Lambda_0\right)_{ki}\left(\Sigma_0\Lambda_0\right)_{aa} - \left(\Sigma_0\Lambda_0\right)_{ka}\left(\Sigma_0\Lambda_0\right)_{ai}\right].
\end{align*}
When $a=i$, we know that $j\neq a,k,l$, so $\left(\Sigma_0\Lambda_0\right)_{kj}=\left(\Sigma_0\Lambda_0\right)_{aj}=0$, thus \\ $\delta_{ai}\left[\left(\Sigma_0\Lambda_0\right)_{kj}\left(\Sigma_0\Lambda_0\right)_{aa} - \left(\Sigma_0\Lambda_0\right)_{ka}\left(\Sigma_0\Lambda_0\right)_{aj}\right]=0$. Similarly, \\ $\delta_{aj}\left[\left(\Sigma_0\Lambda_0\right)_{ki}\left(\Sigma_0\Lambda_0\right)_{aa} - \left(\Sigma_0\Lambda_0\right)_{ka}\left(\Sigma_0\Lambda_0\right)_{ai}\right]=0$. Therefore, $B_{G_0}^{\left[\lambda_{ka},\sigma_{ij}\right]}=0$, i.e.
\begin{equation*}
    B = \mathbf{0}.
\end{equation*}
Use a similar argument, we have
\begin{equation*}
    C = \mathbf{0}.
\end{equation*}
Therefore, $B_{G_0}$ is in fact a block diagonal matrix:
\begin{equation*}
    B_{G_0} = \left[\begin{array}{cc}
        A & \mathbf{0} \\
        \mathbf{0} & D
    \end{array}\right].
\end{equation*}
Note that $\det\left(D\right)$ is again rational. Let $\Lambda_0'=2I_n$, then $D$ with $\Lambda_0=\Lambda_0'$, denoted as $D'$ is diagonal, and all the elements on the diagonal are non-zero. So $\det\left(D'\right)\neq 0$. Therefore, the set of values of $\Lambda_0$ such that $\det\left(D\right)=0$ has Lebesgue measure zero, i.e., $\det\left(D\right)\neq 0$, generically. On the other hand, let $\Lambda_0$ satisfies:
\begin{equation*}
    \lambda_{ij}^0 = \left\{\begin{array}{lcl}
        2 &,& i=j \mbox{ or } \left(i,j\right) = \left(k,a\right) \mbox{ or } \left(i,j\right) = \left(a,l\right) \\
        0 &,& \mbox{otherwise}
    \end{array}\right.,
\end{equation*}
then $\det\left(A\right) \neq 0$. This implies that $\det(A)\neq 0$ generically. Because the union of finitely many Lebesgue measure zero sets has Lebesgue measure zero, we conclude that $\det\left(B_{G_0}\right) = \det\left(A\right)\det\left(D\right) \neq 0$ generically.
\end{case}

\begin{case}\label{case:2_prf_B_G_rank_nr} Let $G_0=\left(V,\mathfrak{E}_{G_0}\right)$ where
\begin{equation*}
    \mathfrak{E}_{G_0} = \left\{\left(a,k\right),\left(a,l\right)\right\},
\end{equation*}
and
\begin{equation*}
    \lambda_{ij}^0 = \left\{\begin{array}{lcl}
        2 &,& i=j \mbox{ or } \left(i,j\right) = \left(a,k\right) \mbox{ or } \left(i,j\right) = \left(a,l\right) \\
        0 &,& \mbox{otherwise}
    \end{array}\right..
\end{equation*}
Then using similar arguments as in Case 1, we can prove that $\det\left(B_{G_0}\right)\neq 0$.
\end{case}

Combining all the arguments above, we can conclude that the set of values of $\Lambda$ such that $\det\left(B_G\right)=0$ has Lebesgue measure zero.
\end{proof}

\begin{proof}[Proof of case $1$ in Theorem \ref{thm:rank_no_multi_edges}]
This is a direct result of Lemmas \ref{lem:rank_J_B_nr} and \ref{lem:B_G_rank_nr}.
\end{proof}

In conclusion, for graphs without reciprocal edges, the rank of the Jacobian matrix (the dimension of the model) is calculable.

As explained in Section \ref{sec:gene_iden}, models with different dimensions are generically identifiable. With Theorem \ref{thm:rank_no_multi_edges}, the dimensions are known for a subset of models that satisfies Assumption \ref{asmp:no_multi_edges}. One interpretation of this result is that for a family of models $\left\{\mathbf{M}_k\right\}_{k=1}^K$ that satisfy Assumption \ref{asmp:no_multi_edges} and the condition $n_r<n_c'$, they are generically identifiable if the number of edges in the corresponding graphs is different, i.e., the number of edges is identifiable. See the next section for a summary and illustration of all the results above.

\subsection{Summary and illustration}
In this section, we first provide a summary of all the identifiability results (Table \ref{tab:sum_iden_res}), in which conditions for two stationary VAR(1) models $\mathbf{M}_1$ and $\mathbf{M}_2$ to satisfy the identifiability criteria or not and corresponding examples are listed. 
In the table, $\mathfrak{MC}_i$ is the set of maximal classes of the model $\mathbf{M}_i$. 
\begin{table}[!tb]
\caption{A summary of identifiability results with examples}
\label{tab:sum_iden_res}
\begin{center}
\begin{tabular}{ | p{0.15\textwidth}| p{0.2\textwidth}| p{0.1\textwidth}| p{0.17\textwidth} p{0.17\textwidth}|}
 \hline
 Conclusion & Conditions & References & Examples & \\ [0.5ex] 
 \hline\hline
 \multirow{3}{6em}{$\mathbf{M}_1$ and $\mathbf{M}_2$ are identifiable} &$G_1$ and $G_2$ do not contain reciprocal edges,  $\dim\left(\mathbf{M}_1\right) = \dim\left(\mathbf{M}_2\right)$, and $\mathfrak{MC}_1 \neq \mathfrak{MC}_2$ & Theorem \ref{thm:iden_maxc_dim} & $G_1$ \newline  \includegraphics[width=0.15\textwidth]{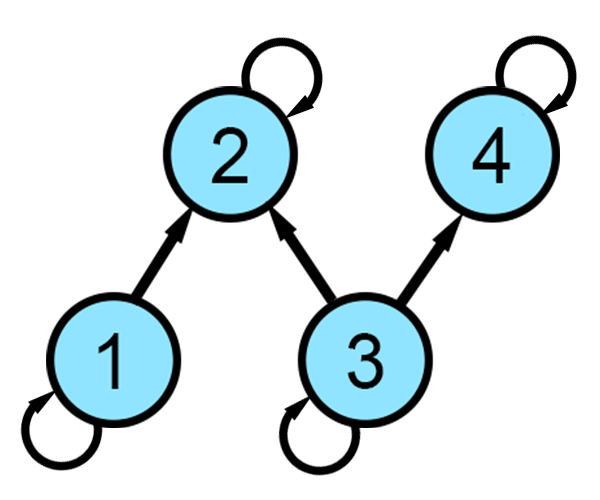}& $G_2$ \newline \includegraphics[width=0.15\textwidth]{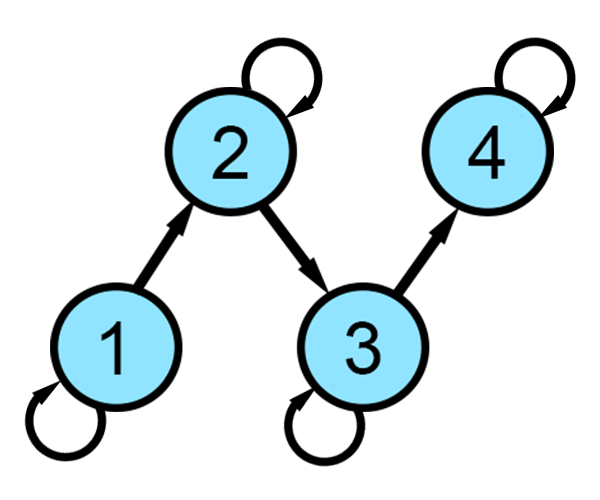} \\
 \cline{2-5}
 &$\dim\left(\mathbf{M}_1\right) \neq \dim\left(\mathbf{M}_2\right)$ & Theorem \ref{thm:rank_no_multi_edges} & $G_1$ \newline  \includegraphics[width=0.15\textwidth]{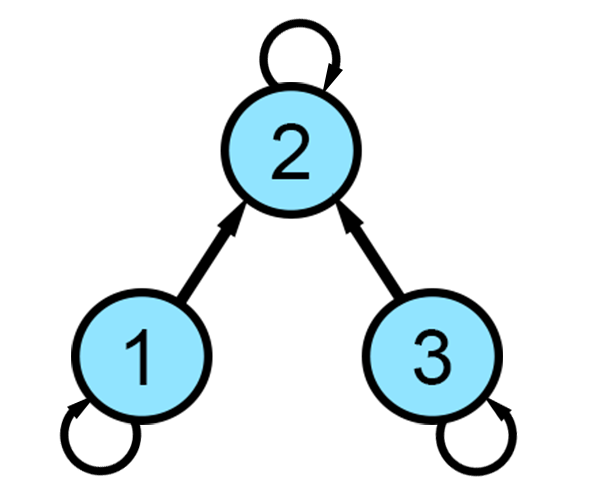}& $G_2$ \newline \includegraphics[width=0.15\textwidth]{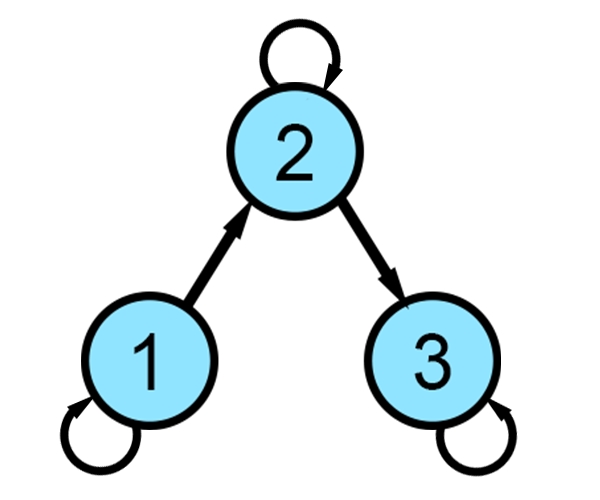} \\
 \cline{2-5}
 &$G_1$ or $G_2$ contains reciprocal edges, and conditions in Theorem \ref{thm:iden_maxc} are satisfied & Theorem \ref{thm:iden_maxc} &Figure \ref{fig:MC_iden} &\\
 \hline
 \multirow{3}{6em}{$\mathbf{M}_1$ and $\mathbf{M}_2$ do not satisfy the identifiability criteria in this paper} &$\dim\left(\mathbf{M}_1\right) = \dim\left(\mathbf{M}_2\right)$ and $\mathfrak{MC}_1 = \mathfrak{MC}_2$ & - &$G_1$ \newline  \includegraphics[width=0.15\textwidth]{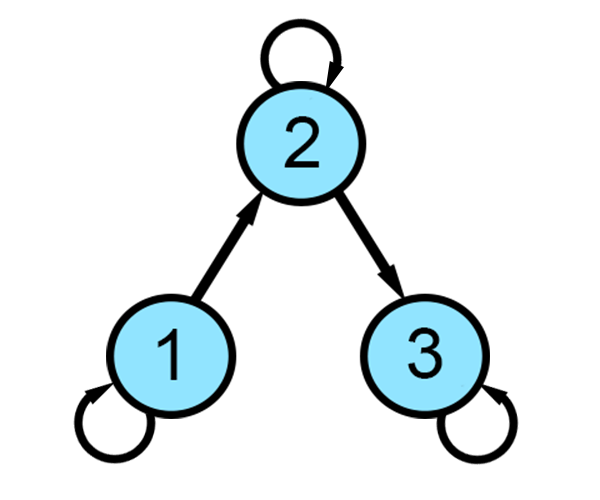}& $G_2$ \newline \includegraphics[width=0.15\textwidth]{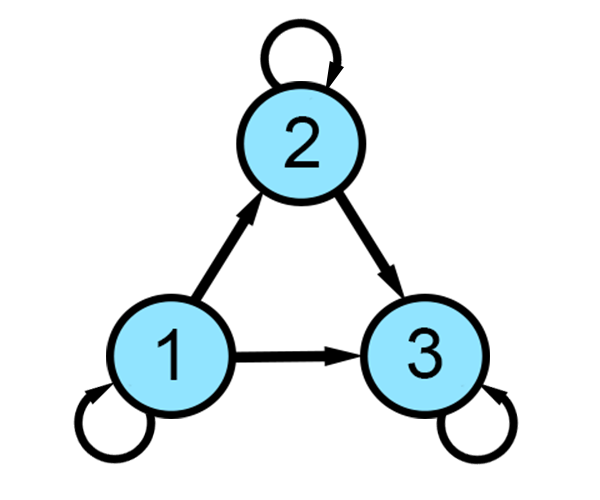}\\
 \cline{2-5}
 &$G_1$ or $G_2$ contains reciprocal edges and conditions in Theorem \ref{thm:iden_maxc} are not satisfied & - &$G_1$ \newline  \includegraphics[width=0.15\textwidth]{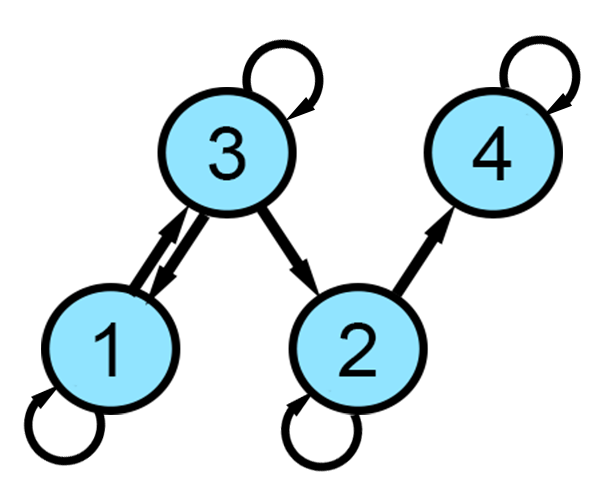}& $G_2$ is any graph\\
 \hline
\end{tabular}
\end{center}
\end{table}

Note that although we present cases where the identifiability criteria are not satisfied, this does not necessarily mean the models are unidentifiable. These graphs fall into two categories: (i) those containing reciprocal edges, the dimensions of which cannot be determined using the results in this paper (final row, Table \ref{tab:sum_iden_res}); and (ii) those that cannot be distinguished by model dimension or maximal classes (second-to-last row, Table \ref{tab:sum_iden_res}). The latter case could suggest a certain degree of homogeneity.

We end this section with an example of a family of graphs that are identifiable from each other.

\begin{example}
The family of graphs in Figure \ref{fig:fam_iden} are identifiable.
\begin{figure}[!tb]
\centering
\begin{subfigure}{.23\textwidth}
  \centering
  \includegraphics[width=0.8\linewidth]{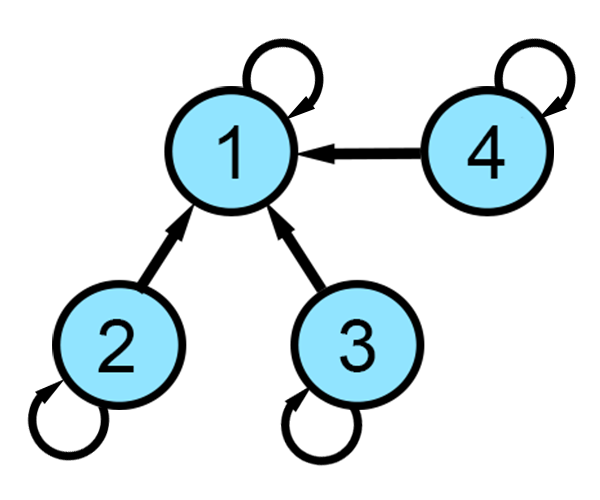}
  \caption{$G_1$}
\end{subfigure}%
\begin{subfigure}{.23\textwidth}
  \centering
  \includegraphics[width=0.8\linewidth]{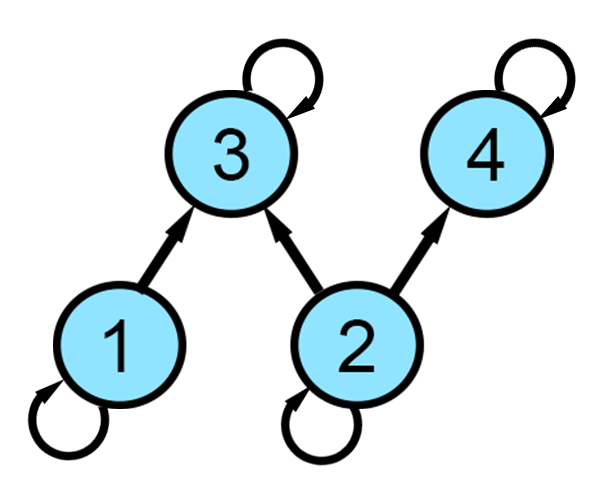}
  \caption{$G_2$}
\end{subfigure}
\begin{subfigure}{.23\textwidth}
  \centering
  \includegraphics[width=0.8\linewidth]{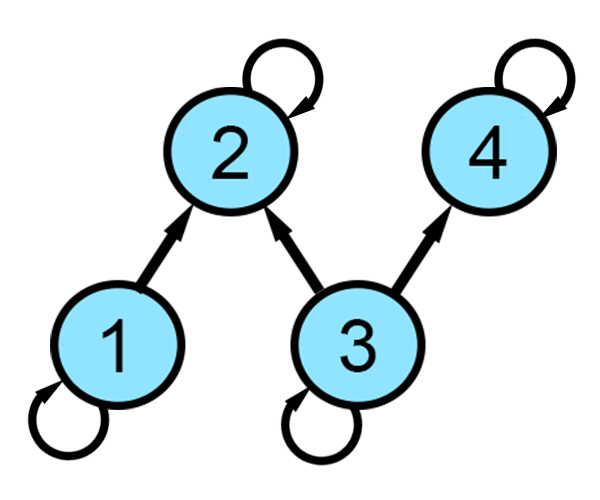}
  \caption{$G_3$}
\end{subfigure}
\begin{subfigure}{.23\textwidth}
  \centering
  \includegraphics[width=0.8\linewidth]{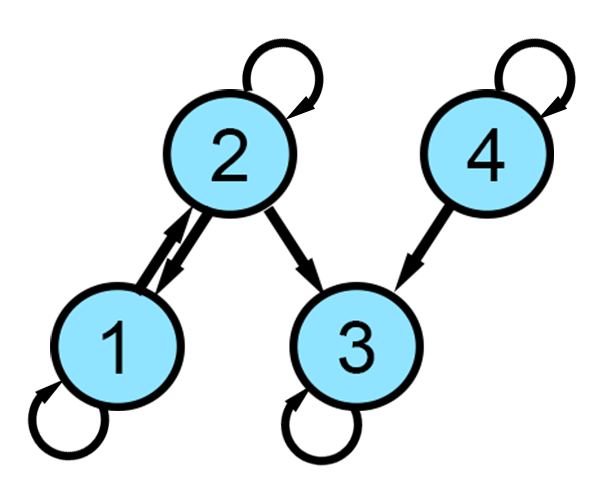}
  \caption{$G_4$}
\end{subfigure}
\caption{A family of identifiable graphs}
\label{fig:fam_iden}
\end{figure}
The relevant properties of the graphs are listed below:
\begin{center}
\begin{tabular}{| p{0.15\textwidth}| p{0.35\textwidth}| p{0.35\textwidth}|}
    \hline
    Graph & Set of maximal classes & Dimension \\ [0.5ex]
    \hline
    \hline
    $G_1$ & $\left\{\{1,2\}, \{1,3\}, \{1,4\}\right\}$ & $7$ \\
    \hline
    $G_2$ & $\left\{\{1,3\}, \{2,3,4\}\right\}$ & $8$ \\
    \hline
    $G_3$ & $\left\{\{1,2\}, \{2,3,4\}\right\}$ & $8$ \\
    \hline
    $G_4$ & $\left\{\{1,2,3\}, \{3,4\}\right\}$ & Not given by results of this paper\\
    \hline
\end{tabular}
\end{center}
By the dimensions, $G_1$ is identifiable from $G_2$ and $G_3$. Moreover, $G_2$ and $G_3$ are identifiable because the maximal classes are different. Finally, $G_4$ is identifiable from $G_1$, $G_2$ and $G_3$ because the sets of maximal class do not overlap.

The entry “Not given by results of this paper” means that the dimension of the model associated with $G_4$ cannot be determined using the results developed here, since $G_4$ contains reciprocal edges. This dimension can in principle be computed using computer algebra systems. Such computations are beyond the scope of the present work, which aims to provide theoretical criteria without relying on computational algebraic tools.
\end{example}

\section{Illustrations of ecological networks}\label{sec:illustrations}
This section presents possible applications of the identifiability results introduced in Section \ref{sec:iden_res}.

\subsection{Bipartite graphs with prior classification}\label{sec:illu_bip_prior}
Recall that bipartite graphs refer to a type of graph whose nodes can be divided into two groups, and all edges are directed from one group to the other.

\begin{proposition}
Any directed bipartite graphs are (generically) identifiable if the nodes are primarily classified, all edges are directed from one part to the other, and the direction is known.
\end{proposition}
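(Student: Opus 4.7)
The plan is to exploit the particularly rigid structure of maximal classes in a bipartite graph and then invoke the identifiability machinery built in Section \ref{sec:iden_res}. Write $V = A \sqcup B$ for the prior classification with every non-self-loop edge directed from $A$ to $B$. First I would observe that Assumption \ref{asmp:no_multi_edges} is automatically satisfied, since for $i \neq j$ the condition $(i,j) \in \mathfrak{E}_G$ forces $i \in A$ and $j \in B$, excluding $(j,i)$. Thus every bipartite graph in the class lies within the scope of Theorem \ref{thm:rank_no_multi_edges}, and in particular its dimension is accessible from the graph alone.

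Next I would compute the maximal classes explicitly. Since no edge enters $A$ from outside, every $a \in A$ has in-degree zero and is a source node. Because edges leaving $B$ are only self-loops, any directed path starting at $a$ reaches exactly $a$ itself together with its out-neighbors in $B$. Hence, writing $N^+(a) = \{b \in B : (a,b) \in \mathfrak{E}_G\}$, we obtain
\begin{equation*}
   \mathcal{MC}_{[a]} = \{a\} \cup N^+(a), \qquad a \in A.
\end{equation*}
Moreover, any $b \in B$ with no incoming edge is itself a source and gives the singleton class $\{b\}$; any $b$ that does receive an edge from $A$ is not a source and contributes no new class. Consequently the full set $\mathfrak{MC}(G)$ is determined cleanly by the classification and the adjacency data.

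The crucial step is the bijection between bipartite graphs and their sets of maximal classes. Given $\mathfrak{MC}(G)$ and the prior classification, each $a \in A$ appears in exactly one class of $\mathfrak{MC}(G)$ (the one containing it), and from that class we recover $N^+(a) = \mathcal{MC}_{[a]} \setminus \{a\}$. Thus the edge set is reconstructed from $\mathfrak{MC}(G)$, which means that any two distinct bipartite graphs $G_1 \neq G_2$ with the prescribed classification satisfy $\mathfrak{MC}(G_1) \neq \mathfrak{MC}(G_2)$.

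To finish, I would split on dimension. If $\dim(\mathbf{M}_{G_1}) \neq \dim(\mathbf{M}_{G_2})$, Proposition \ref{prop:gene_iden_diff} directly yields generic identifiability. Otherwise, both graphs satisfy Assumption \ref{asmp:no_multi_edges} and have equal dimension, so Theorem \ref{thm:iden_maxc_dim} applies thanks to the distinctness of their maximal classes established above. The argument for a finite family of such graphs reduces to this pairwise case. The main obstacle is really just the bookkeeping in the bijection step — in particular making sure that $B$-nodes with no incoming edge are correctly accounted for as singleton maximal classes so that distinct edge sets always produce distinct $\mathfrak{MC}$; once that is pinned down, the invocation of the two earlier identifiability results is immediate.
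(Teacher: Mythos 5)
Your proof is correct, and its combinatorial core is exactly the observation the paper relies on: with the classification $V=A\sqcup B$ known and all non-loop edges directed $A\to B$, every $a\in A$ is a source node, the maximal classes are precisely the sets $\{a\}\cup N^+(a)$ for $a\in A$ together with singletons $\{b\}$ for the $b\in B$ with no incoming edge, and hence the edge set is recoverable from $\mathfrak{MC}(G)$, so distinct bipartite graphs with the same classification have distinct sets of maximal classes (this is the paper's ``each maximal class contains exactly one resource and all the consumers it feeds''). Where you diverge is in how the identifiability conclusion is drawn. The paper gives no formal proof and implicitly leans on the correspondence between $\supp(\Sigma)$ and the set of maximal classes (Proposition \ref{prop:supp_sig_maxc} and Theorem \ref{thm:set_maxc}), which in fact yields a constructive recovery of the bipartite graph from the support of the stationary covariance via Algorithms \ref{alg:sig_maxc} and \ref{alg:maxc_graphs}. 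You instead route the conclusion through the matroid/dimension machinery, splitting each pair according to whether $\dim(\mathbf{M}_{G_1})=\dim(\mathbf{M}_{G_2})$ and invoking Proposition \ref{prop:gene_iden_diff} in the unequal case and Theorem \ref{thm:iden_maxc_dim} in the equal case. Both routes are legitimate; your dichotomy is airtight because generic identifiability is a pairwise condition and the two cases are exhaustive, and it has the small advantage that the dimension never needs to be computed, so your preliminary appeal to Assumption \ref{asmp:no_multi_edges} and Theorem \ref{thm:rank_no_multi_edges} is actually dispensable, whereas the paper's implicit route is more informative in that it reconstructs the graph itself from $\supp(\Sigma)$ rather than only separating candidates within a finite family.
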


\begin{example}
Figure \ref{fig:tro_net_c} is an example of a hierarchical ecological network.
\begin{figure}[!tb]
    \centering
    \includegraphics[width=0.4\textwidth]{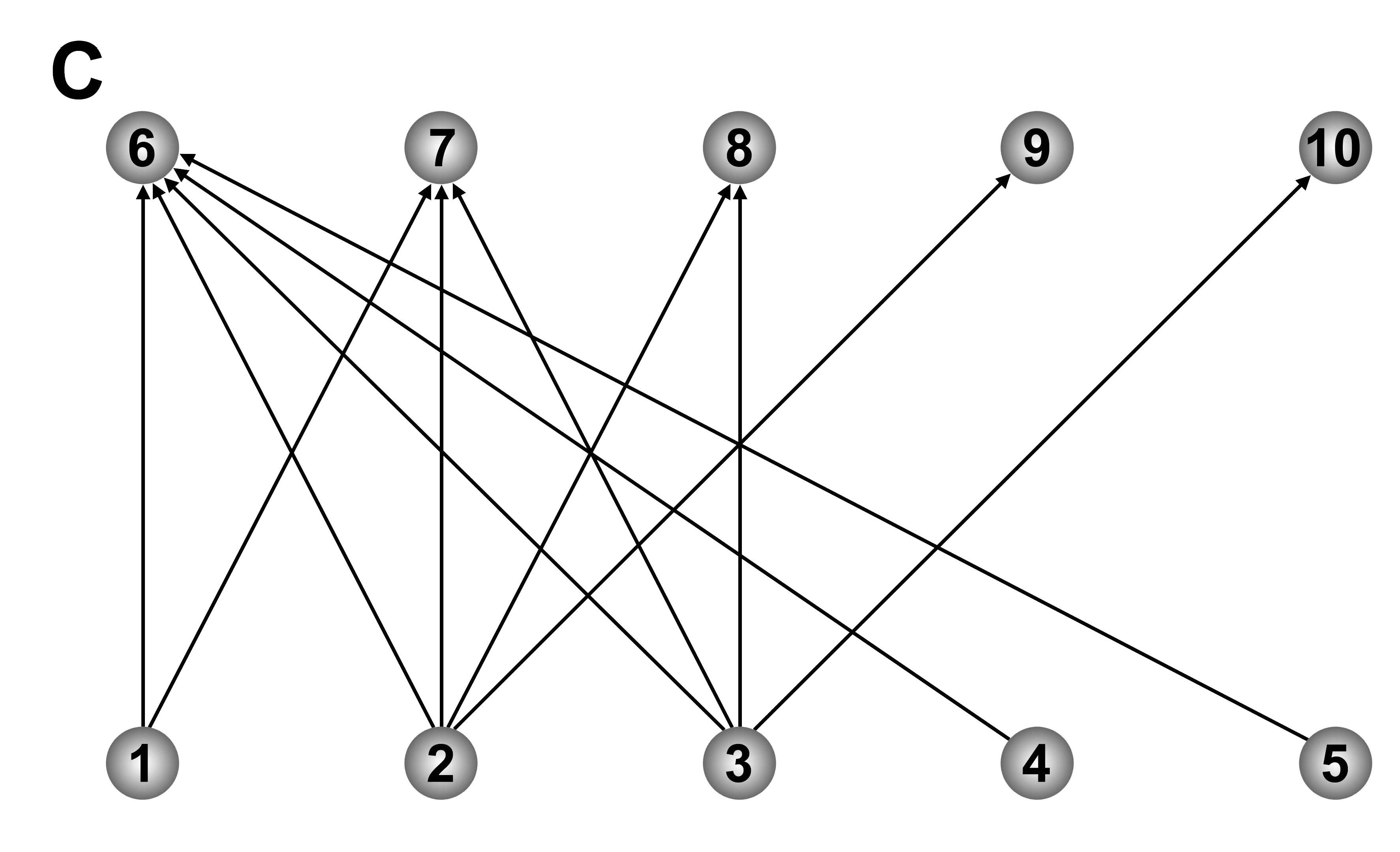}
    \caption{Example of hierarchical ecological networks (\cite{ings2009ecological})}
    \label{fig:tro_net_c}
\end{figure}
The maximal classes are:
\begin{equation*}
    \left\{\{1,6,7\}, \{2,6,7,8,9\},\{3,6,7,8,9,10\},\{4,6\},\{5,6\}\right\}.
\end{equation*}
If we already know that species $1-5$ are resources, and species $6-10$ are consumers, then each maximal class contains exactly one resource and all the consumers it feeds, thus the whole graph is identifiable.
\end{example}

\subsection{Bipartite graphs without prior classification}\label{sec:illu_bip_no_prior}
Generally, for a directed bipartite graph, whose edges are directed from one part to the other, and the direction is known, if we do not know the classification of the nodes, the whole graph is not identifiable. However, maximal classes can help to recover at least part of the graph. As shown in the following example.

\begin{example}
Consider again the network shown in Figure \ref{fig:tro_net_c}, if we do not know who are the resources, or who are the consumers, then we will not be able to recover the whole graph. But we know that each maximal class contains exactly one resource and all the consumers it feeds.
\end{example}

\subsection{Resilience of the network}\label{sec:illu_resi}
Maximal classes can sometimes indicate the resilience of the network.

\begin{proposition}
If two maximal classes are disjoint, the nodes from one maximal class are completely unrelated to the ones from the other.
\end{proposition}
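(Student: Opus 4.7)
The plan is to give a precise meaning to ``completely unrelated'' and then reduce the claim to results already established. I would interpret unrelatedness in two complementary senses: (i) structurally, as the absence of any directed edge, directed path, or common ancestor linking the two maximal classes in $G$; and (ii) statistically, as the vanishing of the corresponding off-diagonal block of $\Sigma$. With this interpretation fixed, the proposition becomes a direct corollary of Lemma \ref{lem:mc_path} and Proposition \ref{prop:supp_sig_maxc}.

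For the structural part, I would let $\mathcal{MC}_{[i]}$ and $\mathcal{MC}_{[j]}$ be two disjoint maximal classes and argue by contradiction. Suppose there existed an edge $(u,v)\in\mathfrak{E}_G$ with $u\in\mathcal{MC}_{[i]}$ and $v\in\mathcal{MC}_{[j]}$. Then condition~1 of Lemma \ref{lem:mc_path} would place $u$ and $v$ in the same maximal class, forcing $\mathcal{MC}_{[i]}=\mathcal{MC}_{[j]}$ by the uniqueness in Proposition \ref{prop:maxc_uniq}, contradicting disjointness. The same reasoning, now using both clauses of Lemma \ref{lem:mc_path}, rules out any longer directed path between the two classes as well as the existence of a common ancestor $\ell\in V$ with directed paths $\ell\rightsquigarrow u$ and $\ell\rightsquigarrow v$: such an $\ell$ would put $u$ and $v$ into a single maximal class containing $\ell$, again contradicting disjointness.

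The statistical part is then immediate from Proposition \ref{prop:supp_sig_maxc}: for any $u\in\mathcal{MC}_{[i]}$ and $v\in\mathcal{MC}_{[j]}$, the two nodes do not lie in the same maximal class, hence $\sigma_{uv}=\sigma_{vu}=0$. Consequently the full covariance block of $\Sigma$ indexed by $\mathcal{MC}_{[i]}\times \mathcal{MC}_{[j]}$ is identically zero, which together with the structural statement captures the sense in which the two groups of nodes neither influence one another dynamically nor exhibit correlated fluctuations at stationarity.

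The only real subtlety is notational: one must be careful that ``disjoint maximal classes'' refers to the node sets being disjoint (not merely the sources being distinct), since a node may belong to several maximal classes in general. Once this is settled, there is no genuine obstacle; the mathematical content has already been packaged by Lemma \ref{lem:mc_path} and Proposition \ref{prop:supp_sig_maxc}, and the proposition is essentially a direct reading of them.
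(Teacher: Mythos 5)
Your structural argument for edges and directed paths reaches the right conclusion, but the justification is already shaky there: from an edge $(u,v)$ with $u\in\mathcal{MC}_{[i]}$, Lemma \ref{lem:mc_path} only tells you that $u$ and $v$ lie in \emph{some} common maximal class; this does not ``force $\mathcal{MC}_{[i]}=\mathcal{MC}_{[j]}$'' via Proposition \ref{prop:maxc_uniq}, since a node may belong to several maximal classes. (The repair is easy in that case: $v$ is reachable from the source of $\mathcal{MC}_{[i]}$ through $u$, hence $v\in\mathcal{MC}_{[i]}\cap\mathcal{MC}_{[j]}$, contradicting disjointness.) The genuine gap is the common-ancestor case, and it cannot be repaired as stated: if $\ell$ has directed paths to $u\in\mathcal{MC}_{[i]}$ and $v\in\mathcal{MC}_{[j]}$, then $u$ and $v$ lie together in the maximal class containing $\ell$, but that class may be a \emph{third} maximal class, which contradicts nothing about the disjointness of $\mathcal{MC}_{[i]}$ and $\mathcal{MC}_{[j]}$.

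Concretely, take $V=\{1,\dots,5\}$ with self-loops and edges $1\to 2$, $3\to 4$, $5\to 2$, $5\to 4$. Then $\mathcal{MC}_{[1]}=\{1,2\}$ and $\mathcal{MC}_{[3]}=\{3,4\}$ are disjoint, yet $\mathcal{MC}_{[5]}=\{2,4,5\}$ contains both $2$ and $4$, so by Proposition \ref{prop:supp_sig_maxc} we generically have $\sigma_{24}\neq 0$. Hence your strong statistical reading of ``completely unrelated'' (the whole $\Sigma$-block indexed by $\mathcal{MC}_{[i]}\times\mathcal{MC}_{[j]}$ vanishes) is false in general: disjointness of the two classes does not prevent their members from sharing a third maximal class. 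The paper states the proposition without proof and invokes it only in the setting of Figure \ref{fig:tro_net_a-b}, where the two disjoint classes are the \emph{only} maximal classes; under that extra hypothesis (or, more generally, when no other maximal class meets both) your argument does go through, because any common ancestor would itself lie in one of the two classes and then both nodes would belong to it, violating disjointness. To make your proof correct you must either add such a hypothesis or weaken the conclusion to: $\sigma_{uv}=0$ exactly for those pairs $u,v$ sharing no maximal class, which is just Proposition \ref{prop:supp_sig_maxc}.
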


\begin{example}
Figure \ref{fig:tro_net_a-b} shows two examples of ecological networks.
\begin{figure}[!tb]
    \centering
    \includegraphics[width=0.4\textwidth]{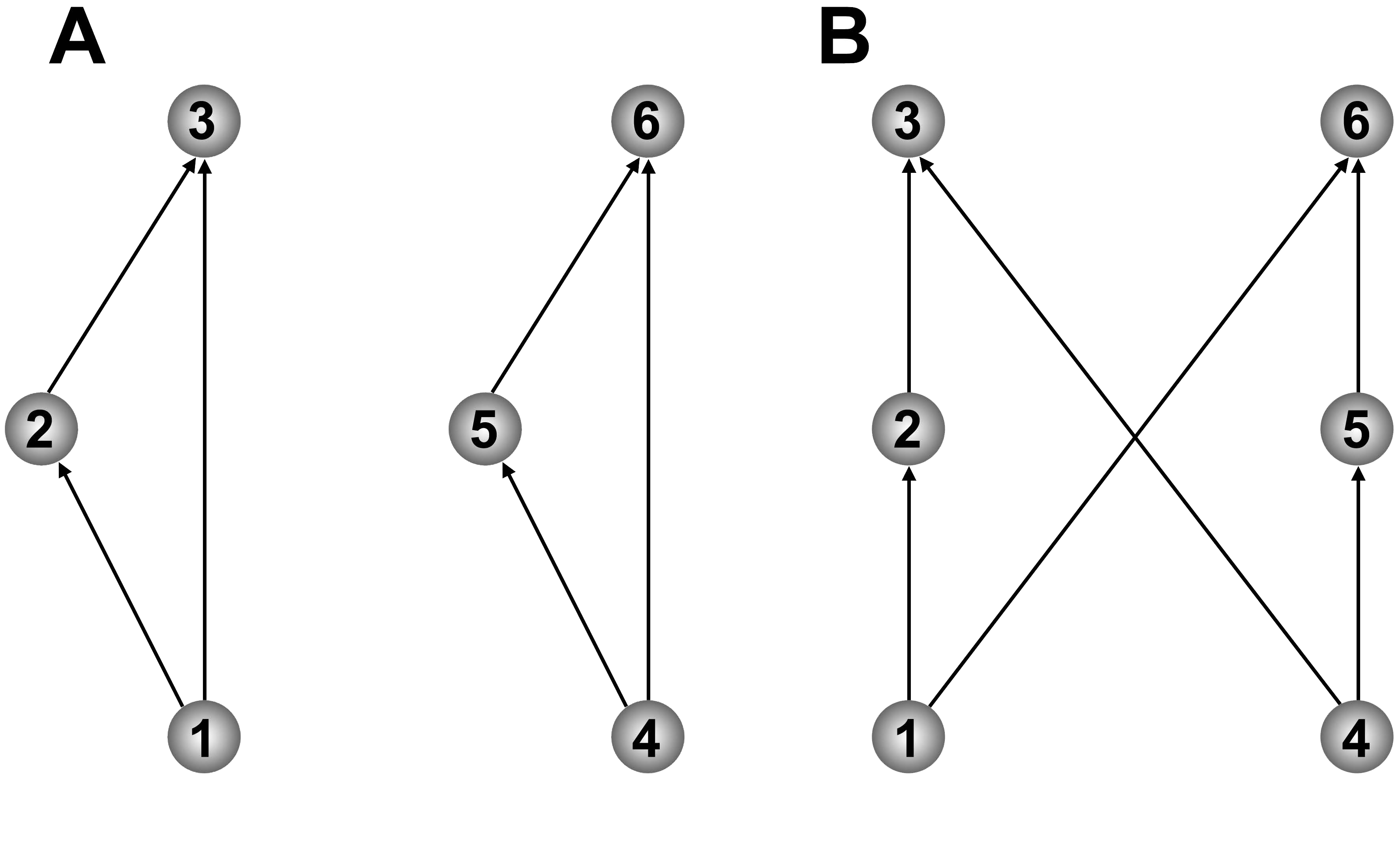}
    \caption{Example of hierarchical ecological networks (\cite{ings2009ecological})}
    \label{fig:tro_net_a-b}
\end{figure}
The Maximal classes of $A$ are
\begin{equation*}
    \left\{\{1,2,3\},\{4,5,6\}\right\},
\end{equation*}
and the maximal classes of $B$ are
\begin{equation*}
    \left\{\{1,2,3,6\},\{3,4,5,6\}\right\}.
\end{equation*}
In this case, $A$ is more resilient than $B$, because the only two maximal classes of $A$ are disjoint, and hence nodes $1-3$ do not affect nodes $4-6$ at all. In particular, if we take node $3$ away from the system, then in $A$, only nodes $1$ and $2$ will be affected, while in $B$, all the other nodes will be affected. 
\end{example}

\section*{Acknowledgments}
I would like to express my deepest gratitude to my PhD supervisors: Stéphane Robin, Viet Chi Tran and Elisa Thébault.  I am especially thankful to Stéphane Robin and Viet Chi Tran, for their unwavering support, guidance, and encouragement throughout the research journey, as well as for the passionate and intellectually stimulating spark they brought to our countless discussions, without which this research would not have been possible. I am also deeply grateful to Elisa Thébault for her valuable contribution in bringing an ecological perspective to this work, which enriched it tremendously. This research benefited from the support of the Chair Modélisation Mathématiques et Biodiversité VEOLIA – École polytechnique-MNHN – FX.

{\footnotesize 
\bibliographystyle{acm}
\bibliography{biblio.bib}
}

\newpage

\begin{appendix}
\section{Algebraic notions}
\label{app:alge_no}
This appendix introduces formally the Krull dimension with respect to Zariski topology in the framework of algebraic geometry (\cite{hartshorne2013algebraic}), and explains why the dimension can be defined as the rank of the Jacobian matrix at a generic point.

First, we introduce the notion of variety, on which the Krull dimension is defined.
\begin{definition}
Let $\mathbb{R}\left[X\right]$ be the set of polynomials of $n$ variables over $\mathbb{R}$, where $X\in\mathbb{R}^n$, and $S\subseteq \mathbb{R}\left[X\right]$ be a subset. The \textit{variety} defined by $S$ is the set:
\begin{equation*}
    \mathcal{V}(S) := \left\{X\in \mathbb{R}^n \mid f(X)=0,\forall f\in S\right\}.
\end{equation*}
\end{definition}
The variety $\mathcal{V}(S)$ is also called the zero set of $S$. A variety of algebraic geometry is not always a manifold in differential geometry because it could contain singularities (e.g. sharp points). Conversely, varieties without singular points fall into the domain of differential geometry as a smooth manifold.

Now, we define the Krull dimension of a variety.
\begin{definition}
Let $\mathcal{V}$ be a variety. Define the \textit{Krull dimension} of $\mathcal{V}$ (denoted as $\dim(\mathcal{V})$) to be the supremum of all integers $n$ such that there exists a chain $\mathcal{V}_0\subsetneq \mathcal{V}_1\subsetneq \cdots \subsetneq \mathcal{V}_n$ of distinct subvarieties of $\mathcal{V}$.
\end{definition}

An example of the Krull dimension is $\dim\left(\mathbb{R}^1\right) = 1$, (\cite[Chapter~1]{hartshorne2013algebraic}). Indeed, the only subvarieties of $\mathbb{R}^1$ are the whole space and single points.

Krull dimension is also well defined for non-variety sets, which is simply the dimension of its Zariski closure, i.e., the smallest variety containing the set. For example, $\mathbb{R^+}$ is not a variety. Its dimension is $1$ because its algebraic closure is $\mathbb{R}^1$. 

In our setting, $\mathbf{M}_G$ is originally a statistical model, not necessarily an algebraic variety. It is a subset of the cone of positive definite symmetric matrices $PD_n$, and hence a subset of the affine coordinate space $\mathbb{A}_{\mathbb{R}}^{n(n+1)/2}$, or simply $\mathbb{R}^{n(n+1)/2}$, with coordinates $\sigma_{ij}$, $1\leq i\leq j\leq n$.

To formally define the dimension of the model, first we define the vanishing ideal of $\mathbf{M}_G$ by
\begin{equation*}
    \mathcal{I}(\mathbf{M}_G) = \left\{ f\in \mathbb{R}[\sigma_{ij}:1\leq i\leq j\leq n] \mid f(\Sigma)=0 \text{ for all }\Sigma\in \mathbf{M}_G \right\}.
\end{equation*}
The Zariski closure of $\mathbf{M}_G$ is then
\begin{equation*}
    \overline{\mathbf{M}_G} = \mathcal{V}(\mathcal{I}(\mathbf{M}_G)) = \left\{\Sigma\in PD_n \mid f(\Sigma)=0 \text{ for all } f\in \mathcal{I}(\mathbf{M}_G) \right\}.
\end{equation*}
This follows from the standard characterization of the Zariski closure (\cite[Chapter~1, Section~4]{cox1997ideals}). Finally, as mentioned in Section \ref{sec:Problem_settings}, we define
\[
    \dim(\mathbf{M}_G) := \dim\left(\overline{\mathbf{M}_G}\right).
\]
This dimension is calculable by Gröbner bases (\cite[Section~3.4]{sullivant2018algebraic}), yet computationally demanding. Instead, we use an equivalent definition for the dimension - the rank of the Jacobian matrix of the parametrization map at a generic point. This is true because the parametrization map is rational (proof in Appendix \ref{app:para_rat})  (\cite[Chapter~16, Th.16.1.7]{sullivant2018algebraic}).

\section{Maximal class and related graphical notions}
\label{app:mc_graph_notion}
In fact, maximal classes are very similar to some common terminologies in graph theory. Recall that given a subset of nodes, an \textit{induced graph} of the set of nodes is the subgraph that contains all edges in the original graph among the nodes within the set, a \textit{directed tree} is a directed graph whose underlying undirected graph is a tree, a \textit{rooted tree} is a directed tree with a designated node called the \textit{root} and each edge is considered to be directed away from the root, a \textit{spanning tree} of a graph is a spanning subgraph that is a tree, and a \textit{rooted spanning tree} of a graph is a spanning tree that is rooted. This similarity is highlighted by the following proposition.

\begin{proposition}
The induced subgraph of a maximal class of a directed graph $G$ has a rooted spanning tree, where the root is (one of) the source node(s) of the maximal class.
\end{proposition}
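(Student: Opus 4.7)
The plan is to construct the rooted spanning tree explicitly via a breadth-first (or depth-first) search from a source node of the maximal class, and then to verify that the search stays entirely inside the induced subgraph.

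Fix a maximal class $\mathcal{MC}$ of $G$, let $i$ be one of its source nodes (which exists by Definition \ref{def:maxc}), and let $H$ denote the induced subgraph of $G$ on the vertex set $\mathcal{MC}$. The first step is a reachability-closure observation: for every $v \in \mathcal{MC}$, the definition of a maximal class guarantees a directed path $i = v_0 \to v_1 \to \cdots \to v_\ell = v$ in $G$; but then every prefix $v_0 \to \cdots \to v_k$ is also a directed path from $i$, so each intermediate $v_k$ lies in $\mathcal{MC}$ as well. Consequently the entire path lies in $H$. In particular, every node of $\mathcal{MC}$ is reachable from $i$ in $H$.

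Next, I would run a BFS from $i$ inside $H$. The BFS produces, for each discovered $v \neq i$, a unique parent edge $(p(v), v) \in \mathfrak{E}_H$. Collect these edges into a set $T$; by construction $T$ has $|\mathcal{MC}| - 1$ edges, each non-root node has exactly one in-edge in $T$, and following parents from any $v$ terminates at $i$ after finitely many steps. This makes $T$ a rooted tree with root $i$ whose underlying undirected graph is acyclic and connected, and whose edges are all oriented away from $i$. By the previous paragraph BFS discovers every node of $\mathcal{MC}$, so $T$ spans $H$. Therefore $T$ is a rooted spanning tree of $H$ rooted at $i$, which is precisely what is claimed.

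There is essentially no obstacle here; the only subtlety — and the point worth stating carefully — is the reachability-closure step, which is what ensures that the paths witnessing membership in the maximal class actually live in the induced subgraph rather than using intermediate vertices outside $\mathcal{MC}$. Once that is noted, any standard tree-growing procedure (BFS, DFS, or an inductive argument on the distance from $i$) produces the spanning tree, and the root is automatically the chosen source node.
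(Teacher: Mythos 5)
Your proof is correct and follows essentially the same idea as the paper's (very terse) argument: the source node reaches every node of the maximal class, and pruning to a tree of edges directed away from it gives the rooted spanning tree. Your version simply makes rigorous, via the reachability-closure observation and an explicit BFS construction, the step that the paper's one-line proof leaves implicit.
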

\begin{proof}
For an induced subgraph of any maximal class of a directed graph $G$, keeping only the edges that direct away from (one of) the source node(s) of the maximal class would result in a rooted spanning tree.
\end{proof}

Note that the induced subgraphs of maximal classes do not construct a spanning forest because there might exist overlaps between different trees (Example \ref{eg:mc}).

\section{Proof of the fact that the parametrization map is rational}
\label{app:para_rat}
\begin{proof}
Recall that the parametrization map is:
\begin{align*}
	\phi_G : M_G \times \mathbb{R}^+ & \rightarrow M_n\left(\mathbb{R}\right) \\
	\left(\Lambda,\omega\right) & \mapsto \Sigma, \mbox{ s.t. } \vec\left(\Sigma\right) = \left(I_n-\Lambda^T\otimes\Lambda^T\right)^{-1}\vec\left(\omega I_n\right).
\end{align*}
It's sufficient to prove that for all $i\in\left[n^2\right]$, the $i^{th}$ coordinate function of $\phi_G$:
\begin{align*}
    \phi_i: M_G^B \times \mathbb{R}^+ & \rightarrow M_n\left(\mathbb{R}\right) \\
    \left(\Lambda,\omega\right) & \mapsto \vec\left(\Sigma\right)_i
\end{align*}
is rational, i.e., $\exists f_i,g_i\in\mathbb{R}\left[\lambda_{11}, \lambda_{12}, \cdots, \lambda_{nn}, \omega\right], s.t. \phi_i=f_i/g_i$, where $\vec\left(\Sigma\right)_i$ is the $i^{th}$ element of $\vec\left(\Sigma\right)$.

For all $i\in\left[n^2\right]$,
\begin{align*}
    \phi_i\left(\Lambda,\omega\right) &= \vec\left(\Sigma\right)_i \\
    &= \sum_{j=1}^{n^2}\left(I_{n^2}-\Lambda^T\otimes\Lambda^T\right)^{-1}_{ij}\vec\left(\omega I_n\right)_j \\
    &= \omega\sum_{j=1}^{n^2}\left(I_{n^2}-\Lambda^T\otimes\Lambda^T\right)^{-1}_{ij}\vec\left(I_n\right)_j \\
    &= \omega\sum_{k=1}^n\left(I_{n^2}-\Lambda^T\otimes\Lambda^T\right)^{-1}_{i,\left(\left(k-1\right)n+k\right)}.
\end{align*}
Since the finite sum of rational functions is also rational, it's sufficient to prove that \\$\left(I_{n^2}-\Lambda^T\otimes\Lambda^T\right)^{-1}$ is a matrix whose elements are rational functions.

By definition,
\begin{equation*}
    \left(I_{n^2}-\Lambda^T\otimes\Lambda^T\right)^{-1} = \frac{1}{\det\left(I_{n^2}-\Lambda^T\otimes\Lambda^T\right)}\adj\left(I_{n^2}-\Lambda^T\otimes\Lambda^T\right),
\end{equation*}
where $\adj\left(\cdot\right)$ is the adjugate matrix. Since $\det\left(\cdot\right)$ is a polynomial, and $\adj\left(\cdot\right)$ is a matrix whose elements are polynomials, $\left(I_{n^2}-\Lambda^T\otimes\Lambda^T\right)^{-1}$ is a matrix whose elements are rational functions.
\end{proof}

\section{Proof of Lemma \ref{lem:J_G_ext_complete}}
\label{app:prf_lem_J_G_ext_complete}
\begin{proof}
Recall from (\ref{eq:var1_sig_lam}):
\begin{equation*}
	\Sigma = \Lambda^T\Sigma\Lambda + \omega I_n,
\end{equation*}
and for all $i,j\in[n]$,
\begin{equation*}
    \left(\Lambda^T\Sigma\Lambda\right)_{ij} = \sum_{k,l=1}^n\lambda_{li}\sigma_{lk}\lambda_{kj} = \sum_{k,l=1}^n\sigma_{kl}\lambda_{ki}\lambda_{lj}.
\end{equation*}
Let
\begin{equation*}
    \delta_{ij} = \left\{\begin{array}{lcr}
        1 & \mbox{if} & i = j \\
        0 & \mbox{if} & i\neq j
    \end{array}\right.,\qquad \forall i,j\in[n].
\end{equation*}
For all $a,b,i,j\in[n]$,
\begin{equation}\label{eq:deri_sig_lam_ome}
\begin{split}
    \frac{\partial\sigma_{ij}}{\partial\lambda_{ab}} &= \frac{\partial\left(\Lambda^T\Sigma\Lambda\right)_{ij}}{\partial\lambda_{ab}} + \frac{\partial\left(\omega I_n\right)_{ij}}{\partial \lambda_{ab}} = \frac{\partial\left(\Lambda^T\Sigma\Lambda\right)_{ij}}{\partial\lambda_{ab}} \\
    &= \sum_{k,l=1}^n\frac{\partial\sigma_{kl}}{\partial\lambda_{ab}}\lambda_{ki}\lambda_{lj} + \sum_{k,l=1}^n\sigma_{kl}\frac{\partial\left(\lambda_{ki}\lambda_{lj}\right)}{\partial\lambda_{ab}} \\
    &= \sum_{k,l=1}^n\frac{\partial\sigma_{kl}}{\partial\lambda_{ab}}\lambda_{ki}\lambda_{lj} + 2\delta_{ij}\delta_{bi}\sum_{l=1}^n\sigma_{al}\lambda_{lb} + \left(1-\delta_{ij}\right)\left(\delta_{bi}\sum_{l=1}^n\sigma_{al}\lambda_{lj} + \delta_{bj}\sum_{l=1}^n\sigma_{al}\lambda_{li}\right), \\
    \frac{\partial\sigma_{ij}}{\partial\omega} &= \frac{\partial\left(\Lambda^T\Sigma\Lambda\right)_{ij}}{\partial\omega} + \frac{\partial\left(\omega I_n\right)_{ij}}{\partial\omega}
    = \sum_{k,l=1}^n\frac{\partial\sigma_{kl}}{\partial\omega}\lambda_{ki}\lambda_{lj} + \delta_{ij}.
\end{split}
\end{equation}
Let $\overline{\mathbf{J}}_{\lambda_{ab},\sigma_{ij}} = \partial\sigma_{ij} / \partial\lambda_{ab}$ and $\overline{\mathbf{J}}_{\omega,\sigma_{ij}} = \partial\sigma_{ij} / \partial\omega$. Then, using (\ref{eq:deri_sig_lam_ome}),
\begin{align*}
    \overline{\mathbf{J}}_{\lambda_{ab},\sigma_{ij}} &= \sum_{k,l=1}^n\overline{\mathbf{J}}_{\lambda_{ab},\sigma_{kl}}A_{\lambda_{kl},\sigma_{ij}} + B_{\lambda_{ab},\sigma_{ij}}, \\
    \overline{\mathbf{J}}_{\omega,\sigma_{ij}} &= \sum_{k,l=1}^n\overline{\mathbf{J}}_{\omega,\sigma_{kl}}A_{\lambda_{kl},\sigma_{ij}} + B_{\omega,\sigma_{ij}},
\end{align*}
where
\begin{align*}
    A_{\lambda_{ab},\sigma_{ij}} &= \lambda_{ai}\lambda_{bj}, \\
    B_{\lambda_{ab},\sigma_{ij}} &= 2\delta_{ij}\delta_{bi}\sum_{l=1}^n\sigma_{al}\lambda_{lb} + \left(1-\delta_{ij}\right)\left(\delta_{bi}\sum_{l=1}^n\sigma_{al}\lambda_{lj} + \delta_{bj}\sum_{l=1}^n\sigma_{al}\lambda_{li}\right) \\
    &= 2\delta_{ij}\delta_{bi}\left(\Sigma\Lambda\right)_{ab} + \left(1-\delta_{ij}\right)\left(\delta_{bi}\left(\Sigma\Lambda\right)_{aj} + \delta_{bj}\left(\Sigma\Lambda\right)_{ai}\right), \\
    B_{\omega,\sigma_{ij}} &= \delta_{ij},
\end{align*}
If we order the rows and columns of $\overline{\mathbf{J}}$ such that the rows correspond to $\lambda_{11}$,$\lambda_{12}$,...,$\lambda_{1n}$,..., \\ $\lambda_{n1}$,$\lambda_{n2}$,...,$\lambda_{nn}$, and $\omega$ respectively, and the columns correspond to $\sigma_{11}$,$\sigma_{12}$,...,$\sigma_{1n}$,...,\\ $\sigma_{n1}$,$\sigma_{n2}$,...,$\sigma_{nn}$, then we are able to organize $A$ and $B$ into the following matrix forms:
\begin{align*}
    A &= \Lambda\otimes\Lambda, \\
    B &= \\ 
    &{\scriptsize\setlength{\arraycolsep}{2.8pt}\left[\begin{array}{ccccccccccccccc}
        2\left(\Sigma\Lambda\right)_{11} & \left(\Sigma\Lambda\right)_{12} & \cdots & \left(\Sigma\Lambda\right)_{1n} & \left(\Sigma\Lambda\right)_{12} & 0 & \cdots & 0 & & & & \left(\Sigma\Lambda\right)_{1n} & 0 & \cdots & 0 \\
        0 & \left(\Sigma\Lambda\right)_{11} & \cdots & 0 & \left(\Sigma\Lambda\right)_{11} & 2\left(\Sigma\Lambda\right)_{12} & \cdots & \left(\Sigma\Lambda\right)_{1n} & & & & 0 & \left(\Sigma\Lambda\right)_{1n} & \cdots & 0 \\
        \vdots & \vdots & \ddots & \vdots & \vdots & \vdots & \ddots & \vdots & &\cdots& & \vdots & \vdots & \ddots & \vdots \\
        0 & 0 & \cdots & \left(\Sigma\Lambda\right)_{11} & 0 & 0 & \cdots & \left(\Sigma\Lambda\right)_{12} & & & & \left(\Sigma\Lambda\right)_{11} & \left(\Sigma\Lambda\right)_{12} & \cdots & 2\left(\Sigma\Lambda\right)_{1n} \\
        &&&&&&&&&&&&&& \\
        2\left(\Sigma\Lambda\right)_{21} & \left(\Sigma\Lambda\right)_{22} & \cdots & \left(\Sigma\Lambda\right)_{2n} & \left(\Sigma\Lambda\right)_{22} & 0 & \cdots & 0 & & & & \left(\Sigma\Lambda\right)_{2n} & 0 & \cdots & 0 \\
        0 & \left(\Sigma\Lambda\right)_{21} & \cdots & 0 & \left(\Sigma\Lambda\right)_{21} & 2\left(\Sigma\Lambda\right)_{22} & \cdots & \left(\Sigma\Lambda\right)_{2n} & & & & 0 & \left(\Sigma\Lambda\right)_{2n} & \cdots & 0 \\
        \vdots & \vdots & \ddots & \vdots & \vdots & \vdots & \ddots & \vdots & &\cdots& & \vdots & \vdots & \ddots & \vdots \\
        0 & 0 & \cdots & \left(\Sigma\Lambda\right)_{21} & 0 & 0 & \cdots & \left(\Sigma\Lambda\right)_{22} & & & & \left(\Sigma\Lambda\right)_{21} & \left(\Sigma\Lambda\right)_{22} & \cdots & 2\left(\Sigma\Lambda\right)_{2n} \\
        &&&&&&&&&&&&&& \\
        &&&&&&&&&&&&&& \\
        &&\vdots&&&&\vdots&&&&&&&\vdots& \\
        &&&&&&&&&&&&&& \\
        &&&&&&&&&&&&&& \\
        2\left(\Sigma\Lambda\right)_{n1} & \left(\Sigma\Lambda\right)_{n2} & \cdots & \left(\Sigma\Lambda\right)_{nn} & \left(\Sigma\Lambda\right)_{n2} & 0 & \cdots & 0 & & & & \left(\Sigma\Lambda\right)_{nn} & 0 & \cdots & 0 \\
        0 & \left(\Sigma\Lambda\right)_{n1} & \cdots & 0 & \left(\Sigma\Lambda\right)_{n1} & 2\left(\Sigma\Lambda\right)_{n2} & \cdots & \left(\Sigma\Lambda\right)_{nn} & & & & 0 & \left(\Sigma\Lambda\right)_{nn} & \cdots & 0 \\
        \vdots & \vdots & \ddots & \vdots & \vdots & \vdots & \ddots & \vdots & &\cdots& & \vdots & \vdots & \ddots & \vdots \\
        0 & 0 & \cdots & \left(\Sigma\Lambda\right)_{n1} & 0 & 0 & \cdots & \left(\Sigma\Lambda\right)_{n2} & & & & \left(\Sigma\Lambda\right)_{n1} & \left(\Sigma\Lambda\right)_{n2} & \cdots & 2\left(\Sigma\Lambda\right)_{nn} \\
        &&&&&&&&&&&&&& \\
        1 & 0 & \cdots & 0 & 0 & 1 & \cdots & 0 & & \cdots & & 0 & 0 & \cdots & 1
    \end{array}\right]} \\
    &= \left[\begin{array}{c}
        \left(\Sigma\Lambda\right)\otimes I_n + \left(\left(\Sigma\Lambda\right)\otimes I_n\right)\mathbf{P} \\ \vec\left(I_{n^2}\right)^T
        \end{array}\right] \\
    &= \left[\begin{array}{c}
        \left(\Sigma\Lambda\otimes I_n\right)\left(I_{n^2} + \mathbf{P}\right) \\ \vec\left(I_{n^2}\right)^T
        \end{array}\right],
\end{align*}
and
\begin{equation*}
    \overline{\mathbf{J}} = \overline{\mathbf{J}}A + B.
\end{equation*}
\end{proof}

\section{Proof of $\psi_G\left(\overline{\mathbf{J}}\right) = \overline{\mathbf{J}}_G$}
\label{app:prf_eq_psi}
\begin{proof}
Recall that $\overline{\mathbf{J}}_G$ has $E_G+1$ rows, which correspond to $\lambda_{ij}$, where $i,j\in\left[n\right]$ and $\left(i,j\right)\in \mathfrak{E}_G$ and $\omega$. By definition, $\overline{\mathbf{J}}$ has extra rows comparing to $\overline{\mathbf{J}}_G$, and in particular, rows that correspond to $\lambda_{st}$, where $s,t\in\left[n\right]$ and $\left(s,t\right)\notin \mathfrak{E}_G$. Therefore the first step of the projection $\psi_G$ is to remove these extra rows.

It's clear that for any $i,j,a,b,s,t\in\left[n\right]$, where $\left(i,j\right)\in \mathfrak{E}_G$ and $\left(s,t\right)\notin \mathfrak{E}_G$,
\begin{equation*}
    \left(\frac{\partial\sigma_{ab}}{\partial\lambda_{ij}}\right)|_{\lambda_{st}=0} = \frac{\partial\left(\sigma_{ab}|_{\lambda_{st}=0}\right)}{\partial\lambda_{ij}}, \mbox{ and } \left(\frac{\partial\sigma_{ab}}{\partial\omega}\right)|_{\lambda_{st}=0} = \frac{\partial\left(\sigma_{ab}|_{\lambda_{st}=0}\right)}{\partial\omega}.
\end{equation*}
These equations show that elements in $\overline{\mathbf{J}}_G$ are exactly the corresponding elements in $\overline{\mathbf{J}}$ setting $\lambda_{st}$ to $0$ for all $s,t\in\left[n\right]$ s.t. $\left(s,t\right)\notin \mathfrak{E}_G$.
\end{proof}

\section{Invertibility of $I_{n^2}-\Lambda\otimes\Lambda$}
\label{app:prf_I_lam_invert}
In Definition \ref{def:gene_iden_min}, we used the Lebesgue measure defined on $\mathbb{R}^{\frac{n(1+n)}{2}}$. Similarly, define $\mu_G$ the Lebesgue measure on $\mathbb{R}^{E_G}$.

\begin{proposition}\label{prop:I_lam_mea_0}
Let $\mathbf{M}_G$ be a stationary VAR(1) model, $\mu_G$ be a measure defined on $M_G$, which is the Lebesgue measure on $\mathbb{R}^{E_G}$, and $M_G^0$ be the subset of $M_G$ defined as
\begin{equation*}
    M_G^0 := \left\{\Lambda \in M_G \mid \left(I_{n^2} - \Lambda\otimes\Lambda\right) \mbox{ is not invertible}\right\},
\end{equation*}
then
\begin{equation*}
    \mu_G\left(M_G^0\right) = 0,
\end{equation*}
\end{proposition}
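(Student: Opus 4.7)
The plan is to realize $M_G^0$ as the intersection of $M_G$ with the vanishing locus of a single nonzero polynomial on $\mathbb{R}^{E_G}$, and then invoke the classical fact that such loci have Lebesgue measure zero. Identify $M_G$ with a subset of $\mathbb{R}^{E_G}$ via the free coordinates $(\lambda_{ij})_{(i,j)\in\mathfrak{E}_G}$. Since the entries of $\Lambda\otimes\Lambda$ are quadratic monomials in the $\lambda_{ij}$, the quantity
\begin{equation*}
    p(\Lambda) \;:=\; \det\bigl(I_{n^2}-\Lambda\otimes\Lambda\bigr)
\end{equation*}
is a polynomial of degree at most $2n^2$ in the free coordinates, and $M_G^0=M_G\cap\{p=0\}$.

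Next I would verify that $p$ is not the zero polynomial on $\mathbb{R}^{E_G}$. The point $\Lambda=0$ lies in $M_G$ because $\rho(0)=0<1$ and the support condition is vacuously satisfied, and there $p(0)=\det(I_{n^2})=1\neq 0$. Hence $p$ is nontrivial.

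With this in hand I would close the argument by the standard measure-theoretic fact that the zero set of a nonzero polynomial in $\mathbb{R}^{E_G}$ has Lebesgue measure zero; a short induction on $E_G$ using Fubini's theorem and the finiteness of roots of a nonzero univariate polynomial suffices. Since $M_G^0\subseteq\{p=0\}$, monotonicity of $\mu_G$ yields $\mu_G(M_G^0)=0$. No step here is delicate: the only tiny thing to check is that $p\not\equiv 0$, which is immediate at $\Lambda=0$. In fact a strictly stronger statement is available, since every $\Lambda\in M_G$ has $\rho(\Lambda)<1$ and the eigenvalues of $\Lambda\otimes\Lambda$ are the pairwise products $\mu_i\mu_j$ of eigenvalues of $\Lambda$, all of modulus $<1$; so $1$ is never an eigenvalue of $\Lambda\otimes\Lambda$ and $M_G^0=\emptyset$. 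I would still present the polynomial argument, however, since it sits more naturally inside the algebraic-geometric framework used elsewhere in the paper and is what is actually needed for later generic-identifiability arguments.
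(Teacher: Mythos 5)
Your proof is correct and follows essentially the same route as the paper: both express $\det\left(I_{n^2}-\Lambda\otimes\Lambda\right)$ as a polynomial in the free coordinates $(\lambda_{ij})_{(i,j)\in\mathfrak{E}_G}$, exhibit one point where it is nonzero, and conclude via the fact that the zero set of a nonzero polynomial has Lebesgue measure zero. Two small differences are worth noting. First, the paper certifies nontriviality at $\Lambda_0=2I_n$ (claiming $\Lambda_0\in M_G$, which is actually false since $\rho(2I_n)=2$, though harmless because the polynomial lives on all of $\mathbb{R}^{E_G}$), whereas your choice $\Lambda=0$ genuinely lies in $M_G$ and is cleaner. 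Second, your spectral aside is a strictly stronger and correct observation the paper does not make: since the eigenvalues of $\Lambda\otimes\Lambda$ are the products $\mu_i\mu_j$ of eigenvalues of $\Lambda$ and $\rho(\Lambda)<1$ on $M_G$, one has $\rho(\Lambda\otimes\Lambda)<1$, so $I_{n^2}-\Lambda\otimes\Lambda$ is invertible everywhere on $M_G$ and $M_G^0=\emptyset$; the polynomial argument is nonetheless the one that generalizes to the later genericity statements, so presenting it as the main proof is a sensible choice.
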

\begin{proof}
It's clear that $\left(I_{n^2} - \Lambda\otimes\Lambda\right)$ is not invertible if and only if $\det\left(I_{n^2} - \Lambda\otimes\Lambda\right) = 0$. Since the determinant is a result of summations and multiplications of the elements of the matrix, it is a polynomial with respect to the entries of $\Lambda$, i.e., $\det\left(I_{n^2} - \Lambda\otimes\Lambda\right) \in \mathbb{R}\left[\lambda_{ij} \mid \left(i,j\right)\in \mathfrak{E}_G\right]$.

Let $\Lambda_0 = 2I_{n}$, then $\Lambda_0\in M_G$ for any graph $G$, and
\begin{equation*}
    \det\left(I_{n^2} - \Lambda_0\otimes\Lambda_0\right) = \det\left(-3I_{n^2}\right) = \left(-3\right)^{n^2}\neq 0.
\end{equation*}
Since $\det\left(I_{n^2} - \Lambda\otimes\Lambda\right)$ is a polynomial, by continuity, it does not identically equal to $0$. Therefore, the set of $\Lambda$ such that the determinant equals zero must have zero measure, i.e., $\mu_G\left(M_G^0\right) = 0$.
\end{proof}






\section{Proof of Lemma \ref{lem:M_P_M_S}}
\label{app:prf_lem_M_P_M_S}
\begin{proof}
First, we prove that $\mu_G\left(M_G\backslash M^P\right) = 0$, i.e., $\Lambda \in M^P$ generically.
For all $i,j\in V$, s.t. there exists a directed path in $G$ from $i$ to $j$, it's clear that for all $k\in\mathbb{N}$, $\lambda_{ij}^{[k]}$ is a polynomial of entries of $\Lambda$. Consider $\Lambda_0\in M_G$ s.t. $\lambda_{st}>0$ for all $(s,t)\in \mathfrak{E}_G$, we know that there exists $K\in\mathbb{N}$ s.t. $\forall k>K$, $\lambda_{0,ij}^{[k]}\neq 0$. Therefore, $\forall k>K$, $\lambda_{ij}^{[k]}\neq 0$ generically, i.e., $\mu_G\left(M_G\backslash M^P\right) = 0$.

Next, we prove that $\mu_G\left(M_G\backslash M^S\right) = 0$. Define:
\begin{align*}
    M^{S_1} &:= \left\{\Lambda\in M_n\left(\mathbb{R}\right) \mid \forall i,j\in V, \sum_{k=0}^{+\infty}\sum_{a=1}^n\lambda_{ai}^{\left[k\right]}\lambda_{aj}^{\left[k\right]} = 0 \Leftrightarrow \forall k\in\mathbb{N},a\in\left[n\right], \lambda_{ai}^{\left[k\right]}\lambda_{aj}^{\left[k\right]} = 0\right\}, \\
    M^{S_2} &:= \left\{\Lambda\in M_n\left(\mathbb{R}\right) \mid \forall i,j\in V, \sum_{s=1}^n\sum_{k=0}^{+\infty}\sum_{l=1}^n\lambda_{li}^{\left[k\right]}\lambda_{ls}^{\left[k\right]}\lambda_{sj} = 0 \right. \\ &\left.\Leftrightarrow \forall k\in\mathbb{N}, s,l\in\left[n\right], \lambda_{li}^{\left[k\right]}\lambda_{ls}^{\left[k\right]}\lambda_{sj} = 0\right\}.
\end{align*}
By definition, $M^S = M^{S_1} \cap M^{S_2}$. Here, we present only the proof of $\mu_G\left(M_G\backslash M^{S_1}\right) = 0$, the proof of $\mu_G\left(M_G\backslash M^{S_2}\right) = 0$ is omitted because it uses the exact same technique as the other one.

Denote
\begin{equation*}
    h_{ij} = \sum_{k=0}^{+\infty}\sum_{a=1}^n\lambda_{ai}^{\left[k\right]}\lambda_{aj}^{\left[k\right]}.
\end{equation*}
$h_{ij}$ is a function defined on $M_G$, i.e., a function of entries of $\lambda_{st}$ where $(s,t)\in\mathfrak{E}_G$. Assume that for some $i,j\in V$, there exists $\Lambda_0\in M_G$, s.t. there exist $k_0\in\mathbb{N}$ and $a_0\in[n]$ s.t. $\lambda_{0,a_0i}^{[k_0]}\lambda_{0,a_0j}^{[k_0]}> 0$, and $h_{ij}\left(\Lambda_0\right)=0$. Denote $c_i = \lambda_{0,a_0i}^{[k_0]}$, and $c_j = \lambda_{0,a_0j}^{[k_0]}$. Note that $c_i$ and $c_j$ are constants in $\mathbb{R}$. Moreover, define another function $h_{ij}|_{k_0,a_0}$ on $M_G$ as $h_{ij}$ with $\lambda_{a_0i}^{[k_0]} = c_i$ and $\lambda_{a_0j}^{[k_0]} = c_j$ fixed as constants. Then $h_{ij}|_{k_0,a_0}$ is an analytic function of the entries of $\Lambda$. Consider again $\Lambda'\in M_G$ s.t. $\lambda_{st}'>0$ for all $(s,t)\in \mathfrak{E}_G$, then by definition, $h_{ij}|_{k_0,a_0}>0$. Therefore, $h_{ij}|_{k_0,a_0}$ does not constantly equal zero on $M_G$. In this case,
\begin{equation*}
    \mu_G\left(\left\{\Lambda\in M_G \mid h_{ij}|_{k_0,a_0} = 0\right\}\right) = 0,
\end{equation*}
because the set of roots of a non-zero real analytic function has Lebesgue measure zero (\cite[Section~3.1]{federer2014geometric}). This implies that
\begin{equation*}
    \mu_G\left(\left\{\Lambda\in M_G \mid \lambda_{a_0i}^{[k_0]}\lambda_{a_0j}^{[k_0]}> 0, \mbox{ and } h_{ij} = 0\right\}\right) = 0.
\end{equation*}
Therefore,
\begin{align*}
    &\mu_G\left(M_G\backslash M^{S_1}\right) \\ 
    &= \mu_G\left(\left\{\Lambda\in M_G \mid \exists i,j\in V s.t. \exists k_0\in\mathbb{N} , a_0\in[n], \lambda_{a_0i}^{[k_0]}\lambda_{a_0j}^{[k_0]}\neq 0, \mbox{ and } h_{ij}= 0\right\}\right) \\
    &= \mu_G\left(\left\{\Lambda\in M_G \mid \exists i,j\in V s.t. \exists k_0\in\mathbb{N} , a_0\in[n], \lambda_{a_0i}^{[k_0]}\lambda_{a_0j}^{[k_0]} > 0, \mbox{ and } h_{ij}= 0\right\}\right) \\
    &= \mu_G\left(\bigcup_{i,j\in V}\bigcup_{k_0\in\mathbb{N}}\bigcup_{a_0\in[n]}\left\{\Lambda \in M_G \mid \lambda_{a_0i}^{[k_0]}\lambda_{a_0j}^{[k_0]} > 0, \mbox{ and } h_{ij}= 0\right\}\right) \\
    &\leq \sum_{i,j\in V}\sum_{k_0\in\mathbb{N}}\sum_{a_0\in[n]}\mu_G\left(\left\{\Lambda \in M_G \mid \lambda_{a_0i}^{[k_0]}\lambda_{a_0j}^{[k_0]} > 0, \mbox{ and } h_{ij}= 0\right\}\right) \\
    &= 0.
\end{align*}
The inequality is true because $\mathbb{N}$ is countable (\cite[Theorem~10.2]{billingsley2017probability}).
\end{proof}

\section{Proof of Lemma \ref{lem:rank_J_G_psi}}
\label{app:prf_lem_rank_J_G_psi}
\begin{proof}
Recall that $\overline{\mathbf{J}}_G$ is $\mathbf{J}_G$ with additional columns that coincide with existing columns of $\mathbf{J}_G$. Therefore,
\begin{equation*}
    rank\left(\mathbf{J}_G\right) = rank\left(\overline{\mathbf{J}}_G\right).
\end{equation*}
Since
\begin{equation*}
    \overline{\mathbf{J}}_G = \psi_G\left(B\right)\left(I_{n^2}-\Lambda\otimes\Lambda\right)^{-1},
\end{equation*}
$rank\left(\overline{\mathbf{J}}_G\right)=rank\left(\psi_G\left(B\right)\right)$. Therefore,
\begin{equation*}
    rank\left(\mathbf{J}_G\right) = rank\left(\psi_G\left(B\right)\right).
\end{equation*}
\end{proof}

\section{Proof of Lemma \ref{lem:rank_J_B_nr}}
\label{app:prf_lem_J_B_nr}
\begin{proof}
By Lemma \ref{lem:rank_J_G_psi}, $\mbox{rank}\left(\mathbf{J}_G\right) = \mbox{rank}\left(\psi_G(B)\right)$, and therefore it is sufficient to prove that $\mbox{rank}\left(\psi_G(B)\right) = n_r$.

Consider a $\left(E_G+1\right)\times\left(E_G+1\right)$ submatrix of $\psi_G\left(B\right)$, where the columns correspond to the set
\begin{equation*}
    \left\{\sigma_{ij} \mid i,j\in\left[n\right],(i,j) \mbox{ or } (j,i)\in\mathfrak{E}_G\right\} \cup \left\{\sigma_{kl}\right\},
\end{equation*}
where $\left\{k,l\right\}\in\mathfrak{C}_G^{mc}$. Note that $\mathfrak{C}_G^{mc}\neq \emptyset$ by Lemma \ref{lem:exist_kl}. After reordering the rows and columns, the submatrix, denoted as $\psi_G\left(B\right)^{\left[E_G+1\right]}$ is
\begin{align*}
    &\psi_G\left(B\right)^{\left[E_G +1\right]} \\ 
    &=\left[\begin{array}{ccccc}
        2\left(\Sigma\Lambda\right)_{11} & 0 & \cdots & 0 & \left(\Sigma\Lambda\right)_{1b}\delta_{a1} + \left(\Sigma\Lambda\right)_{1a}\delta_{b1} \\
        0 & 2\left(\Sigma\Lambda\right)_{22} & \cdots & 0 & \left(\Sigma\Lambda\right)_{2b}\delta_{a2} + \left(\Sigma\Lambda\right)_{2a}\delta_{b2} \\
        \vdots & \vdots & \ddots & \vdots & \vdots \\
        0 & 0 & \cdots & 2\left(\Sigma\Lambda\right)_{nn} & \left(\Sigma\Lambda\right)_{nb}\delta_{an} + \left(\Sigma\Lambda\right)_{na}\delta_{bn} \\
        &&&& \\
        2\left(\Sigma\Lambda\right)_{i1}\delta_{j1} & 2\left(\Sigma\Lambda\right)_{i2}\delta_{j2} & \cdots & 2\left(\Sigma\Lambda\right)_{in}\delta_{jn} & \left(\Sigma\Lambda\right)_{ib}\delta_{ja} + \left(\Sigma\Lambda\right)_{ia}\delta_{jb} \\
        &&&& \\
        1 & 1 & \cdots & 1 & 0
    \end{array}\right].
\end{align*}
Since $n_r\leq n_c'$, it's sufficient to prove $\psi_G\left(B\right)^{\left[E_G+1\right]}$ is full rank.

Consider $\psi_G\left(B\right)^{\left[E_G+1\right]}$ as a block matrix such that
\begin{align*}
    \psi_G\left(B\right)^{\left[E_G+1\right]} = \left[\begin{array}{cc}
        A & B \\
        C & D
    \end{array}\right],
\end{align*}
where
\begin{align*}
    A = \left[\begin{array}{cccc}
        2\left(\Sigma\Lambda\right)_{11} & 0 & \cdots & 0 \\
        0 & 2\left(\Sigma\Lambda\right)_{22} & \cdots & 0 \\
        \vdots & \vdots & \ddots & \vdots \\
        0 & 0 & \cdots & 2\left(\Sigma\Lambda\right)_{nn}
    \end{array}\right].
\end{align*}
It's clear that $A$ is invertible, since $\left(\Sigma\Lambda\right)_{ii}\neq 0$ for all $i\in\left[n\right]$. Apply the Guttman rank additivity formula (see \cite{guttman1944general}), we have
\begin{equation*}
    rank\left(\psi_G\left(B\right)^{\left[E_G+1\right]}\right) = rank\left(A\right) + rank\left(D-CA^{-1}B\right).
\end{equation*}
Therefore, $\psi_G\left(B\right)^{\left[E_G+1\right]}$ is full rank if and only if $D-CA^{-1}B$ is full rank.

By definition, for any $\left(i,j\right)\in\mathfrak{E}_G'$ and $\left(a,b\right)\in S'$,
\begin{align*}
    &\left(CA^{-1}B\right)^{\left[\lambda_{ij},\sigma_{ab}\right]} \\
    &=\left[\begin{array}{cccc}
        2\left(\Sigma\Lambda\right)_{i1}\delta_{j1} & 2\left(\Sigma\Lambda\right)_{i2}\delta_{j2} & \cdots & 2\left(\Sigma\Lambda\right)_{in}\delta_{jn}
    \end{array}\right] \\ 
    &\left[\begin{array}{cccc}
        2^{-1}\left(\Sigma\Lambda\right)_{11}^{-1} & 0 & \cdots & 0 \\
        0 & 2^{-1}\left(\Sigma\Lambda\right)_{22}^{-1} & \cdots & 0 \\
        \vdots & \vdots & \ddots & \vdots \\
        0 & 0 & \cdots & 2^{-1}\left(\Sigma\Lambda\right)_{nn}^{-1}
    \end{array}\right]\left[\begin{array}{c}
        \left(\Sigma\Lambda\right)_{1b}\delta_{a1} + \left(\Sigma\Lambda\right)_{1a}\delta_{b1} \\
        \left(\Sigma\Lambda\right)_{2b}\delta_{a2} + \left(\Sigma\Lambda\right)_{2a}\delta_{b2} \\
        \vdots \\
        \left(\Sigma\Lambda\right)_{nb}\delta_{an} + \left(\Sigma\Lambda\right)_{na}\delta_{bn}
    \end{array}\right] \\
    &=\left[\begin{array}{ccccc}
        0 & \cdots & \left(\Sigma\Lambda\right)_{ij}\left(\Sigma\Lambda\right)_{jj}^{-1} & \cdots & 0
    \end{array}\right]\left[\begin{array}{c}
        \left(\Sigma\Lambda\right)_{1b}\delta_{a1} + \left(\Sigma\Lambda\right)_{1a}\delta_{b1} \\
        \left(\Sigma\Lambda\right)_{2b}\delta_{a2} + \left(\Sigma\Lambda\right)_{2a}\delta_{b2} \\
        \vdots \\
        \left(\Sigma\Lambda\right)_{nb}\delta_{an} + \left(\Sigma\Lambda\right)_{na}\delta_{bn}
    \end{array}\right] \\
    &= \delta_{ja}\left[\left(\Sigma\Lambda\right)_{ij}\left(\Sigma\Lambda\right)_{jj}^{-1}\left(\Sigma\Lambda\right)_{jb}\right] + \delta_{jb}\left[\left(\Sigma\Lambda\right)_{ij}\left(\Sigma\Lambda\right)_{jj}^{-1}\left(\Sigma\Lambda\right)_{ja}\right],
\end{align*}
and
\begin{align*}
    &\left(CA^{-1}B\right)^{\left[\omega,\sigma_{ab}\right]} \\
    &= \left[\begin{array}{cccc}
        1 & 1 & \cdots & 1
    \end{array}\right]\left[\begin{array}{cccc}
        2^{-1}\left(\Sigma\Lambda\right)_{11}^{-1} & 0 & \cdots & 0 \\
        0 & 2^{-1}\left(\Sigma\Lambda\right)_{22}^{-1} & \cdots & 0 \\
        \vdots & \vdots & \ddots & \vdots \\
        0 & 0 & \cdots & 2^{-1}\left(\Sigma\Lambda\right)_{nn}^{-1}
    \end{array}\right] \\ 
    &\left[\begin{array}{c}
        \left(\Sigma\Lambda\right)_{1b}\delta_{a1} + \left(\Sigma\Lambda\right)_{1a}\delta_{b1} \\
        \left(\Sigma\Lambda\right)_{2b}\delta_{a2} + \left(\Sigma\Lambda\right)_{2a}\delta_{b2} \\
        \vdots \\
        \left(\Sigma\Lambda\right)_{nb}\delta_{an} + \left(\Sigma\Lambda\right)_{na}\delta_{bn}
    \end{array}\right] \\
    &= \left[\begin{array}{cccc}
        2^{-1}\left(\Sigma\Lambda\right)_{11}^{-1} & 2^{-1}\left(\Sigma\Lambda\right)_{22}^{-1} & \cdots & 2^{-1}\left(\Sigma\Lambda\right)_{nn}^{-1}
    \end{array}\right]\left[\begin{array}{c}
        \left(\Sigma\Lambda\right)_{1b}\delta_{a1} + \left(\Sigma\Lambda\right)_{1a}\delta_{b1} \\
        \left(\Sigma\Lambda\right)_{2b}\delta_{a2} + \left(\Sigma\Lambda\right)_{2a}\delta_{b2} \\
        \vdots \\
        \left(\Sigma\Lambda\right)_{nb}\delta_{an} + \left(\Sigma\Lambda\right)_{na}\delta_{bn}
    \end{array}\right] \\
    &= 2^{-1}\left(\Sigma\Lambda\right)_{ab}\left(\Sigma\Lambda\right)_{aa}^{-1} + 2^{-1}\left(\Sigma\Lambda\right)_{ba}\left(\Sigma\Lambda\right)_{bb}^{-1}.
\end{align*}
Thus,
\begin{align*}
    \left(D-CA^{-1}B\right)^{\left[\lambda_{ij},\sigma_{ab}\right]} &= \delta_{ja}\left[\left(\Sigma\Lambda\right)_{ib} - \left(\Sigma\Lambda\right)_{ij}\left(\Sigma\Lambda\right)_{jj}^{-1}\left(\Sigma\Lambda\right)_{jb}\right] \\ 
    &+ \delta_{jb}\left[\left(\Sigma\Lambda\right)_{ia} - \left(\Sigma\Lambda\right)_{ij}\left(\Sigma\Lambda\right)_{jj}^{-1}\left(\Sigma\Lambda\right)_{ja}\right], \\
    \left(D-CA^{-1}B\right)^{\left[\omega,\sigma_{ab}\right]} &= -2^{-1}\left(\Sigma\Lambda\right)_{ab}\left(\Sigma\Lambda\right)_{aa}^{-1} - 2^{-1}\left(\Sigma\Lambda\right)_{ba}\left(\Sigma\Lambda\right)_{bb}^{-1}.
\end{align*}
Therefore, for $B_G$ of Definition \ref{def:B_G_1},
\begin{align*}
    B_G^{\left[\lambda_{ij},\sigma_{ab}\right]} &= \left(\Sigma\Lambda\right)_{jj}\left(D-CA^{-1}B\right)^{\left[\lambda_{ij},\sigma_{ab}\right]}, \\
    B_G^{\left[\omega,\sigma_{ab}\right]} &= \left(-2\right)\left(D-CA^{-1}B\right)^{\left[\omega,\sigma_{ab}\right]}.
\end{align*}
Note that
\begin{equation*}
    B_G = M_G\left(D-CA^{-1}B\right),
\end{equation*}
where $M_G$ is a diagonal matrix whose elements on the diagonal are $\left(\Sigma\Lambda\right)_{jj}$, s.t. \\ $\exists i\in\left[n\right], \left(i,j\right)\in \mathfrak{E}_G'$ and $-2$. Hence $M_G$ is invertible, and
\begin{equation*}
    rank\left(D-CA^{-1}B\right)=rank\left(B_G\right).
\end{equation*}
\end{proof}

\section{Proof of Case 2 in Theorem \ref{thm:rank_no_multi_edges}}
\label{app:prf_2_thm_rank_no_multi_edges}
By Lemma \ref{lem:exist_kl}, we know that under Assumption \ref{asmp:no_multi_edges}, the condition $n_r > n_c'$ is satisfied if and only if the set $\mathfrak{C}_G^{mc}$ is empty, i.e., $C_G^{mc}=0$, meaning that for all $i,j\in[n]$, either $(i,j)\mbox{ or }(j,i)\in \mathfrak{E}_G$ or $i,j$ do not belong to the same maximal class. In addition, $n_c'=E_G$.

Using similar arguments as before, we define another matrix, $B_G'$, which is derived from the Jacobian matrix, and prove that the Jacobian matrix $\mathbf{J}_G$ is full rank if and only if $B_G'$ is full rank.

\begin{definition}
Let $\mathbf{M}_G$ be a stationary VAR(1) model that satisfies $n_r > n_c'$. Define a square matrix $B_G'$ of size $E_G'$, where the rows correspond to $\lambda_{ij}$, s.t. $i,j\in[n]$ and $(i,j)\in\mathfrak{E}_G'$, and the columns correspond to $\sigma_{ab}$ s.t. $a,b\in[n]$ and $(a,b)$ or $(b,a)\in\mathfrak{E}_G'$ as follows
\begin{equation*}
    B_G'^{\left[\lambda_{ij},\sigma_{ab}\right]} = \delta_{ja}\left[\left(\Sigma\Lambda\right)_{ib}\left(\Sigma\Lambda\right)_{jj} - \left(\Sigma\Lambda\right)_{ij}\left(\Sigma\Lambda\right)_{jb}\right] + \delta_{jb}\left[\left(\Sigma\Lambda\right)_{ia}\left(\Sigma\Lambda\right)_{jj} - \left(\Sigma\Lambda\right)_{ij}\left(\Sigma\Lambda\right)_{ja}\right].
\end{equation*}
\end{definition}

Note that $B_G'$ is $B_G$ excluding the last row that corresponds to $\omega$.

\begin{lemma}
Let $\mathbf{M}_G$ be a stationary VAR(1) model that satisfies the condition $n_r > n_c'$. Then $\mbox{rank}\left(\psi_G\left(B\right)\right) = n_c'$ (i.e., $\mbox{rank}\left(\mathbf{J}_G\right)=n_c'$) if $B_G'$ is full rank, i.e., $\mbox{rank}(B_G')=E_G'$.
\end{lemma}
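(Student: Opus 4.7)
The plan is to mirror the strategy of Lemma \ref{lem:rank_J_B_nr} but aim for the smaller target $n_c'$ rather than $n_r$. First I would invoke Lemma \ref{lem:rank_J_G_psi} to replace $\mbox{rank}(\mathbf{J}_G)$ by $\mbox{rank}(\psi_G(B))$. By Proposition \ref{prop:rank_n_n_r_n_c}, we already have $\mbox{rank}(\psi_G(B)) \le \min\{n_r,n_c'\} = n_c'$ in the current regime, so it suffices to exhibit an $n_c' \times n_c'$ submatrix of $\psi_G(B)$ of full rank.

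Under Assumption \ref{asmp:no_multi_edges} and the condition $n_r > n_c'$, Lemma \ref{lem:exist_kl} forces $\mathfrak{C}_G^{mc} = \emptyset$, so the $n_c'$ non-zero columns of $\mathbf{J}_G$ (identified via Proposition \ref{prop:J_G_0}) are precisely those indexed by $\sigma_{ii}$ for $i \in [n]$ together with $\sigma_{ab}$ for the $E_G'$ unordered pairs $\{a,b\}$ with $(a,b)$ or $(b,a) \in \mathfrak{E}_G'$; in particular $n_c' = n + E_G'$. I would then form the square submatrix $N$ of $\psi_G(B)$ whose columns are these non-zero columns and whose rows are the $E_G = n + E_G'$ rows indexed by $\lambda_{ij}$ with $(i,j) \in \mathfrak{E}_G$, simply discarding the $\omega$ row (since here there is one fewer column than $n_r$).

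Next, I would decompose $N$ as the block matrix
\begin{equation*}
    N = \begin{pmatrix} A & B \\ C & D \end{pmatrix},
\end{equation*}
grouping the self-loop rows $\lambda_{ii}$ and self-loop columns $\sigma_{ii}$ into the top-left $n \times n$ block $A$. Exactly as in the proof of Lemma \ref{lem:rank_J_B_nr} in Appendix \ref{app:prf_lem_J_B_nr}, $A$ is diagonal with entries $2(\Sigma\Lambda)_{ii}$, which are generically non-zero, so $A$ is invertible. The Guttman rank additivity formula then yields $\mbox{rank}(N) = n + \mbox{rank}(D - CA^{-1}B)$, and the block computation of $D - CA^{-1}B$ performed in Appendix \ref{app:prf_lem_J_B_nr} transfers verbatim (the $\omega$ row plays no role in that computation), showing that $D - CA^{-1}B$ equals $B_G'$ up to multiplication on the left by the invertible diagonal matrix $M_G'$ with entries $(\Sigma\Lambda)_{jj}$. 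Hence $\mbox{rank}(D - CA^{-1}B) = \mbox{rank}(B_G')$.

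If $B_G'$ is of full rank $E_G'$, we conclude $\mbox{rank}(N) = n + E_G' = n_c'$, and combined with the upper bound $\mbox{rank}(\psi_G(B)) \le n_c'$ this gives the claim. The step demanding the most care is the verification that dropping the $\omega$ row and the extra $\sigma_{kl}$ column from the matrix $\psi_G(B)^{[E_G+1]}$ of Lemma \ref{lem:rank_J_B_nr} yields exactly the block decomposition I have just described, so that the $D - CA^{-1}B$ calculation can be reused without modification; otherwise the computation is essentially a restriction of the earlier one and presents no new technical obstacle.
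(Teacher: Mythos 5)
Your proposal is correct and follows essentially the same route as the paper: under $n_r>n_c'$ (so $\mathfrak{C}_G^{mc}=\emptyset$ by Lemma \ref{lem:exist_kl} and $n_c'=E_G$), extract the $E_G\times E_G$ submatrix of $\psi_G(B)$ on the $\lambda_{ij}$ rows and the non-zero columns, apply the same block decomposition with the diagonal self-loop block and the Guttman rank additivity formula as in Lemma \ref{lem:rank_J_B_nr}, and identify the Schur complement with $B_G'$ up to an invertible diagonal factor. The only cosmetic difference is that you invoke Proposition \ref{prop:rank_n_n_r_n_c} for the upper bound $\mbox{rank}(\psi_G(B))\leq n_c'$, whereas the paper simply notes that the chosen submatrix already contains all distinct non-zero columns; the substance is the same.
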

\begin{proof}
Consider a $E_G\times E_G$ submatrix of $\psi_G(B)$, where the rows correspond to $\lambda_{ij}$ s.t. $i,j\in[n]$ and $(i,j)\in\mathfrak{E}_G$, and the columns correspond to $\sigma_{ab}$ s.t. $a,b\in[n]$, $a\leq b$, and $(a,b)$ or $(b,a)\in\mathfrak{E}_G$. In fact, this submatrix contains all distinct non-zero columns of $\psi_G(B)$. Therefore, if this submatrix is full rank, then $\mbox{rank}\left(\psi_G\left(B\right)\right) = n_c' = E_G$.

Use the same technique as in Lemma \ref{lem:rank_J_B_nr}, where we conder $\psi_G\left(B\right)$ as a block matrix and apply the Guttman rank additivity formula, we know that
\begin{equation*}
    \mbox{rank}(\psi_G(B)) = E_G\ \Leftrightarrow\ \mbox{rank}(B_G')=E_G'.
\end{equation*}
\end{proof}

Denote a subset $\overline{M_G^B}$ of $M_G$:
\begin{equation*}
    \overline{M_G^B} := \left\{\Lambda\in M_G \mid \det(B_G') = 0\right\}.
\end{equation*}

\begin{lemma}\label{lem:B_G_rank_nc'}
Let $\mathbf{M}_G$ be a stationary VAR(1) model that satisfies Assumption \ref{asmp:no_multi_edges} and $n_r > n_c'$, then
\begin{equation*}
    \mu_G\left(\overline{M_G^B}\right) = 0.
\end{equation*}
\end{lemma}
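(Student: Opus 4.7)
My plan is to mirror the argument of Lemma \ref{lem:B_G_rank_nr}: show that $\det(B_G')$ is a rational function on $M_G \times \mathbb{R}^+$ whose numerator is not identically zero, so that its vanishing locus has Lebesgue measure zero. The pleasant simplification, relative to the previous case, is that the absence of ``missing-edge'' pairs $\mathfrak{C}_G^{mc}$ in the regime $n_r > n_c'$ will make the choice of witness parameter much easier.

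First I would verify that $\det(B_G') = g/h$ for polynomials $g, h \in \mathbb{R}[(\lambda_{ij})_{(i,j)\in\mathfrak{E}_G}, \omega]$. This follows from the fact that $\Sigma$ is itself a matrix of quotients of polynomials (see Appendix \ref{app:para_rat}), hence so is $\Sigma\Lambda$, and therefore so is every entry of $B_G'$. The denominator $h$ is, up to a nonzero constant, a product of a power of $\det(I_{n^2} - \Lambda \otimes \Lambda)$ and of the diagonal terms $(\Sigma\Lambda)_{ii}$; the first factor is generically nonzero by Proposition \ref{prop:I_lam_mea_0}, and the diagonal terms are generically nonzero by Lemma \ref{lem:supp_Sig_Lam_maxc} together with the standing assumption that all self-loops are present. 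Hence $\{h=0\}$ has measure zero and it suffices to prove $g \not\equiv 0$.

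To produce a witness, I would set $\Lambda_0 = \mu I_n$ for some $\mu \in (0,1)$, which lies in $M_G$ thanks to the self-loops. Solving (\ref{eq:sig_lam}) directly gives $\Sigma_0 = \omega(1-\mu^2)^{-1} I_n$ and $\Sigma_0 \Lambda_0 = \omega\mu(1-\mu^2)^{-1} I_n$. Substituting into the definition of $B_G'$ and using $i \neq j$ for every $(i,j)\in\mathfrak{E}_G'$, all off-diagonal entries of $\Sigma_0\Lambda_0$ vanish; a short case analysis on the Kronecker deltas $\delta_{ja}, \delta_{jb}$ then shows that $B_G'^{[\lambda_{ij}, \sigma_{ab}]}(\Lambda_0)$ equals $[(\Sigma_0\Lambda_0)_{ii}]^2 \neq 0$ when $\{a,b\} = \{i,j\}$ and vanishes otherwise. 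Under Assumption \ref{asmp:no_multi_edges}, the map $(i,j) \mapsto \{i,j\}$ is a bijection between the row-index set $\mathfrak{E}_G'$ and the column-index set of $B_G'$, so $B_G'(\Lambda_0)$ is diagonal (after reordering) with nonzero diagonal, giving $\det(B_G'(\Lambda_0)) \neq 0$.

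The main hurdle is really just identifying the right witness: the triplet construction of Lemma \ref{lem:B_G_rank_nr} is unavailable here because Lemma \ref{lem:triplets} requires $n_r \leq n_c'$. What rescues the argument is precisely the structural consequence of $n_r > n_c'$: $\mathfrak{C}_G^{mc} = \emptyset$ means every pair of nodes in a common maximal class is joined by a (unique, by Assumption \ref{asmp:no_multi_edges}) directed edge, so the row- and column-index sets of $B_G'$ are in bijection via $(i,j) \leftrightarrow \{i,j\}$. This bijection is exactly what makes the diagonal $\Lambda_0$ produce a diagonal $B_G'$, whereas in the regime of Lemma \ref{lem:B_G_rank_nr} the extra ``missing-edge'' column would have broken such a bijection and forced the more elaborate off-diagonal construction.
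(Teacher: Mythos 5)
Your proposal is correct and follows essentially the same route as the paper: argue that $\det(B_G')$ is a rational function of the parameters and exhibit a scalar-multiple-of-identity witness at which $B_G'$ becomes diagonal (up to the row--column bijection $(i,j)\leftrightarrow\{i,j\}$ guaranteed by Assumption \ref{asmp:no_multi_edges} and $\mathfrak{C}_G^{mc}=\emptyset$) with nonzero diagonal. Your only deviation is choosing $\Lambda_0=\mu I_n$ with $\mu\in(0,1)$ rather than the paper's $2I_n$, which if anything is more careful since it keeps the witness inside $M_G$.
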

\begin{proof}
Use similar arguments as in Lemma \ref{lem:B_G_rank_nr}, $\det(B_G')$ is a rational function of the entries of $\Lambda$ and $\omega$. Therefore, it is sufficient to find a $\Lambda_0\in M_G$ s.t. $\det(\overline{B_{G_0}})\neq 0$, where $\overline{B_{G_0}}$ is $B_G'$ with $\Lambda=\Lambda_0$. Let
\begin{equation*}
    \Lambda_0 = 2I_n,
\end{equation*}
then $\overline{B_{G_0}}$ is diagonal with non-zero diagonal entries. Therefore $\det(B_G')\neq 0$.
\end{proof}

\section{Algorithms Related to Maximal Classes}
\label{app:alg_mc}

We design Algorithm \ref{alg:maxc} to derive the set of maximal classes for any directed graph. This Algorithm first applies Kosaraju's algorithm (\cite{sharir1981strong}) to find all SCCs of the graph, and then construct the condensed graph, which is a directed acyclic graph where all SCCs are represented as a single node. Finally, we perform a Depth First Search (DFS) algorithm (\cite{thomas2009introduction}) on each of the nodes with in-degree zero in the condensed graph.

The computational complexity of Algorithm \ref{alg:maxc} is dictated by the Kosaraju's algorithm, the construction of the condensed graph, and the subsequent DFS searches. Since these components are all linear operations relative to the size of the graph, the complexity of the algorithm is upper bounded by $O(k(n + E))$, where $n$ is the number of nodes, $E$ is the number of edges in the original graph, and $k$ is the number of SCCs in the condensed graph. Consequently, the best-case scenario achieves a complexity of $O(n + E)$, while the worst-case upper bound is $O(n(n + E))$.

\begin{algorithm}[htbp]
\caption{Algorithm to identify maximal classes of a directed graph}\label{alg:maxc}
\hspace*{\algorithmicindent} \textit{Input:} a directed graph $G=\left(V,\mathfrak{E}_G\right)$ \\
\hspace*{\algorithmicindent} \textit{Output:} the number and the list of maximal classes of $G$
\begin{algorithmic}[1]
\Procedure{DFS}{$G,i,j$} \Comment{Find the set of all reachable nodes from $i$, and store it into the $j^{th}$ list of the list maxc}
\State add $i$ to maxc[$j$] \Comment{Add $i$ itself to the list}
\ForAll {nodes $w\in V$ s.t. $\left(i,w\right)\in \mathfrak{E}_G$} \Comment{Do the same thing to all children of $i$}
    \State DFS$(G,w,j)$
\EndFor
\EndProcedure

\Procedure{MAXC}{G} \Comment{Search for the maximal classes of graph $G$}
\State Perform Kosaraju's algorithm, return $L$, a list of SCCs
\State Construct the condensed graph $G' = \left(L,\mathfrak{E}_{G'}\right)$
\State $j\gets 1$
\State maxc$\gets$ an empty list of lists
\ForAll{$i\in |L|$ s.t. $\deg^-\left(L[i]\right)=0$ in $G'$}
\State DFS$(G',L[i],j)$ \Comment{Put the set of all reachable nodes from $i$ into the $j^{th}$ element of the list}
\State $j\gets j+1$
\EndFor

\Return $j-1$ (number of maximal classes) and maxc (list of maximal classes)
\EndProcedure
\end{algorithmic}
\end{algorithm}

To reconstruct the list of graphs associated with a given set of maximal classes, that is, the inverse problem of the precedent algorithm, we propose Algorithm \ref{alg:maxc_graphs}. This algorithm starts with identifying candidate sources for each maximal class. It then iterates through the complete set of possible edges $\left(u,v\right)\in V\times V$ to isolate 'forbidden' edges -- those whose inclusion would violate the definitions of the maximal classes or their sources. Finally, the algorithm evaluates subsets of the remaining 'allowable' edges, applying Algorithm \ref{alg:maxc} to verify which resulting graphs yield the target set of maximal classes.

The complexity of Algorithm \ref{alg:maxc_graphs} is significantly higher due to its combinatorial nature. While the initial source allocation and edge pruning is efficient at $O(m\cdot n^2)$ with $m$ being the number of maximal classes, the subsequent exploration of edge subsets results in a worst-case complexity of $O(2^p \cdot n(n+E))$, where $p$ is the number of 'allowable' edges. The resulting complexity is upper bounded by $O(m\cdot n^2 + 2^p \cdot n(n+E))$.

An R implementation of both the algorithms is available at \url{https://github.com/Bi-xuan/maximal_class}. While the procedures presented in this section provide a systematic way to reconstruct maximal classes from their corresponding graph structures as well as the inverse, it is important to note that these implementations are not computationally optimal and leave significant room for algorithmic refinement. However, such optimizations and a systematic empirical validation across a broad selection of synthetic data and large-scale simulated ecological networks remain outside the primary theoretical focus of this work. These enhancements and performance testing are dedicated to future research.

\begin{algorithm}[htbp]
\caption{Algorithm for the reconstruction of graphs via maximal classes}\label{alg:maxc_graphs}
\hspace*{\algorithmicindent} \textit{Input:} a list of maximal classes $\mathfrak{MC} = \left\{\mathcal{MC}_k,k=1,\cdots,m\right\}$ \\
\hspace*{\algorithmicindent} \textit{Output:} the list of graphs associated with $\mathfrak{MC}$
\begin{algorithmic}[1]
\Procedure{RECONSTRUCT}{$\mathfrak{MC}$}
\State $V \gets \bigcup_{k=1}^m\mathcal{MC}_k$ \Comment{Define the set of nodes}
\State SourceCandidates $\gets$ an empty list of lists
\ForAll{$\mathcal{MC}_k\in\mathfrak{MC}$}
\State $S_k \gets \left\{v\in\mathcal{MC}_k \mid v\notin \mathcal{MC}_j,\forall j \neq k\right\}$ \Comment{Identify nodes that belong to only one maximal class}
\State Add $S_k$ to SourceCandidates
\EndFor
\State ForbiddenEdges $\gets \emptyset$
\ForAll{$(u,v)\in V\times V$}
\ForAll{$\mathcal{MC}_k\in\mathfrak{MC}$}
\If{$u\in\mathcal{MC}_k$ and $v\notin \mathcal{MC}_k$}
\State Add $(u,v)$ to ForbiddenEdges \Comment{By definition of maximal classes}
\EndIf
\If{$v\in S_k$ and $u\notin S_k$}
\State Add $(u,v)$ to ForbiddenEdges \Comment{Source nodes must have in-degree zero from outside of their SCC}
\EndIf
\EndFor
\EndFor
\State PossibleEdges $\gets V\times V \backslash$ForbiddenEdges
\State PossibleGraphs $\gets \emptyset$
\ForAll{selection of edges $E\subseteq$ PossibleEdges} \Comment{Permute over subsets of possible edges}
\State $G\gets (V,E)$
\If{MAXC($G$) $== \mathfrak{MC}$} \Comment{Verify using Algorithm \ref{alg:maxc}}
\State Add $G$ to PossibleGraphs
\EndIf
\EndFor
\Return PossibleGraphs
\EndProcedure
\end{algorithmic}
\end{algorithm}
\end{appendix}

\end{document}